\documentclass{article}

\usepackage{amsmath}
\usepackage{graphicx}
\usepackage{amssymb,color}
\usepackage{amsthm}
\usepackage{enumerate}
\usepackage{geometry}
\usepackage{setspace}
\usepackage{authblk}

\geometry{a4paper, left=25mm, right=25mm, top=20mm, bottom=20mm}

\onehalfspacing
\addtolength{\parskip}{0.4em}

\numberwithin{equation}{section}
\numberwithin{figure}{section}

\newtheorem{theorem}{Theorem}[section]
\newtheorem{lemma}[theorem]{Lemma}

\newtheorem{proposition}[theorem]{Proposition}
\newtheorem{remark}[theorem]{Remark}

\newtheorem*{notations}{Notations}
\newtheorem{example}[theorem]{Example}

\allowdisplaybreaks[4]

\begin{document}

\rmfamily 

\title{Sharp Asymptotic Behavior of the Steady Pressure-free Prandtl System}

\author[1]{Chen Gao\thanks{Email: gaochen@amss.ac.cn}}
\author[2]{ChuanKai Zhao\thanks{Corresponding Author. Email: chuankaizhao@amss.ac.cn}}

\affil[1]{\small School of Mathematical Sciences, University of Science and Technology of China, Hefei, 230026, P.R. China}
\affil[2]{\small School of Mathematical Sciences, Capital Normal University, Beijing, 100048, P.R. China}

\date{}

\maketitle

\begin{abstract}
	This paper investigates the asymptotic behavior of solutions to the steady pressure-free Prandtl system. By employing a modified von Mises transformation, we rigorously prove the far-field convergence of Prandtl solutions to Blasius flow. A weighted energy method is employed to establish the optimal convergence rate assuming that the initial data constitutes a perturbation of the Blasius profile. Furthermore, a sharp maximum principle technique is applied to derive the optimal convergence rate for concave initial data. The critical weights and comparison functions depend on the first eigenfunction of the linearized operator associated with the system.

	\noindent{\textbf{Keywords:} steady Prandtl system, self-similar solution, asymptotic behavior.}
\end{abstract}


\section{Introduction}

We consider the following steady Prandtl system:
\begin{equation}\label{prandtl}
	\begin{gathered}
		uu_x+vu_y-u_{yy}+p_x=0,\\
		u_x+v_y=0,
	\end{gathered}
\end{equation}
in the domain $D=\{0<x<\infty, \quad 0<y<\infty\}$, with boundary conditions
\begin{equation}\label{boundary}
	\begin{gathered}
		u(x, 0)=0, \quad u(0, y)=u_0(y), \quad v(x, 0)=0, \\
		u(x, y) \rightarrow U(x) \quad \textit{as} \quad y \rightarrow \infty.
	\end{gathered}
\end{equation}
Here $U(x)$ and $p(x)$ satisfy the Bernoulli's law governed by:
\begin{equation}
	U(x)U'(x)+p'(x)=0.
\end{equation}
In subsequent analysis, we consider the constant outer flow where without loss of generality we normalize $U(x) \equiv 1$.

The classical von Mises transformation is conventionally expressed as 
\begin{equation}\label{vM}
	X=x,\quad Y=\int_{0}^{y}u(x,y')dy'
\end{equation}
which preserves streamwise coordinates while mapping velocity profiles. Introducing a new unknown function $W(X,Y) := u^2(x,y)$, through systematic differentiation and substitution, the system (\ref{prandtl})-(\ref{boundary}) transforms into the quasilinear degenerate parabolic equation
\begin{equation}
	W_X-\sqrt{W} W_{YY}=0,
\end{equation}
defined in the transformed domain $\Omega=\{0<X<\infty, \quad 0<Y<\infty\}$, with corresponding boundary conditions
\begin{equation}
	\begin{gathered}
		W (X, 0)=0, \quad W (0, Y)=W_0(Y), \\
		W (X, Y) \rightarrow 1 \quad \textit {as} \quad Y \rightarrow \infty.
	\end{gathered}
\end{equation}

This reformulation builds upon the pioneering work of Oleinik \cite{O1}, who established the existence and uniqueness of classical solutions for the Prandtl system through the maximum principle techniques.

\begin{theorem}\label{Oleinik}
	Assume that the initial data $u_0(y)$ satisfies for some $\alpha \in (0,1)$:
	\begin{equation}
		\begin{gathered}
			u_0(y)>0 \quad \text{for}  \quad y>0,\\
			u_0(0)=0, \quad u'_0(0)>0, \quad u_0(y)\rightarrow 1 \quad \textit{as} \quad y\rightarrow \infty,\\
			u_0(y)\in C^{2,\alpha}([0,\infty]).
		\end{gathered}
	\end{equation}
Moreover, assume that for small $y$ the following compatibility condition is satisfied at the point $(0, 0)$:
\begin{equation}\label{compatibility}
	u''_0(y)=O(y^2).
\end{equation}

Then the Prandtl system \eqref{prandtl}-\eqref{boundary} admits a global classical solution $[u,v]$ in the streamwise direction with the following regularity for any fixed $X_0>0$:
\begin{enumerate}
    \item $u(x,y)$ is bounded and continuous in $[0,X_0]\times [0,\infty)$, $u>0$ for $y > 0$;
    
    \item $u_y > m > 0$ for $0 < y < y_0$, where $m$ and $y_0$ are constants;
    
    \item $u_y$, $u_{yy}$ are bounded and continuous in $[0,X_0]\times [0,\infty)$;
    
    \item $u_x$, $v$ and $v_y$ are locally bounded and continuous in $[0,X_0]\times [0,\infty)$.
\end{enumerate}
\end{theorem}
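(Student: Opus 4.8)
The plan is to follow Oleinik's scheme: turn the steady system into a Cauchy problem for a scalar degenerate parabolic equation in the streamwise direction, solve a uniformly parabolic regularization, and pass to the limit using a priori bounds that do not see the regularization parameter. Using the von Mises transformation \eqref{vM} with the unknown $W=u^2$ already recorded above, the task is to produce a solution of
$$W_X=\sqrt W\,W_{YY}\ \text{ in }\Omega,\qquad W(X,0)=0,\quad W(0,Y)=W_0(Y),\quad W(X,Y)\to1\ \text{as }Y\to\infty,$$
where $W_0(Y)=u_0^2(y)$ with $Y=\int_0^y u_0$. The hypotheses on $u_0$ transcribe to $0<W_0<1$ on $(0,\infty)$, $W_0(0)=0$, $W_0'(0)=2u_0'(0)>0$ and $W_0\to1$, while the compatibility condition \eqref{compatibility} ensures that $W_0$ is regular enough up to $Y=0$ (in particular $W_0''$ is Hölder there) for the degenerate problem to admit a solution with the $C^{2,\alpha}$-type regularity claimed near the corner $(0,0)$; note that the equation degenerates where $W\to0$, i.e.\ at $Y=0$.

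Next I would regularize by replacing the coefficient with $\sqrt{W^\epsilon+\epsilon}$ and, if needed, truncating the strip at $Y=N$ with a compatible boundary value there, and by mollifying the initial data to $W_0^\epsilon$. For fixed $\epsilon$ the coefficient is bounded below by $\sqrt\epsilon$, so the regularized problem is uniformly parabolic and is classically solvable on every finite $X$-interval by standard quasilinear parabolic theory, with $0\le W^\epsilon\le1$ by the maximum principle. The heart of the proof is a set of a priori bounds uniform in $\epsilon$ (and $N$): the monotonicity $W^\epsilon_Y\ge0$ together with $L^\infty$ bounds on $W^\epsilon_Y$ and $W^\epsilon_{YY}$, obtained by differentiating the equation and invoking the maximum principle; interior and---thanks to \eqref{compatibility}---up-to-the-corner Hölder/Schauder estimates; and, most crucially, the \emph{non-degeneracy} estimate $W^\epsilon\ge cY$, equivalently $W^\epsilon_Y\ge c>0$, on a strip $0\le Y\le Y_0$, established by comparison with an explicit barrier tailored to the degenerate operator. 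With these in hand, a compactness and diagonal argument yields, as $\epsilon\to0$ (and $N\to\infty$), a solution $W$ of the degenerate problem with $0\le W\le1$, $W_Y\ge0$, $W\ge cY$ near $Y=0$, and bounded, continuous $W_Y$ and $W_{YY}$.

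It remains to undo the transformation and read off the regularity. Because $W\sim cY$ near $Y=0$ and $W\to1$ as $Y\to\infty$, the map inverse to $y(X,Y)=\int_0^Y dY'/\sqrt{W(X,Y')}$ is well defined, strictly increasing, and onto $[0,\infty)$; setting $u=\sqrt W$ composed with this inverse and $v=-\int_0^y u_x\,dy'$, a direct computation shows $[u,v]$ solves \eqref{prandtl}-\eqref{boundary}. Claims (1)-(4) then come out of the $W$-estimates: positivity and continuity of $u$ from $0<W\le1$; since $u_y=\tfrac12 W_Y$ and $u_{yy}=\tfrac12 u\,W_{YY}$ along the transformation, (2) is exactly the non-degeneracy bound with $y_0$ the image of $Y_0$, (3) follows from the $L^\infty$ bounds and continuity of $W_Y,W_{YY}$, and (4) follows from $W_X=\sqrt W\,W_{YY}$ together with integration in $y$; uniqueness is obtained by applying the maximum principle to the difference of two solutions in the $W$-variable. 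The main obstacle is the non-degeneracy estimate $W^\epsilon_Y\ge c>0$ near the wall---equivalently $u_y\ge m>0$ for small $y$: it is at once the linchpin that makes the change of variables invertible and the point at which the degeneracy at $Y=0$ and the corner compatibility \eqref{compatibility} must be controlled, so that building a barrier that stays positive at $Y=0$, is compatible with the coefficient $\sqrt{W^\epsilon+\epsilon}$, and lies below $W^\epsilon$ on the lateral boundary is the delicate step.
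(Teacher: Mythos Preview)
The paper does not give its own proof of this theorem: it is stated as Oleinik's classical result and attributed to \cite{O1} (see also the monograph \cite{1999Mathematical}). Your outline is essentially the standard Oleinik scheme---von Mises reduction to $W_X=\sqrt{W}\,W_{YY}$, uniformly parabolic regularization, maximum-principle a priori bounds (including the crucial lower barrier giving $W_Y\ge c>0$ near $Y=0$), compactness, and inversion of the transformation---and as such it matches the approach the paper is invoking by citation rather than by proof.
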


The Prandtl system \eqref{prandtl}-\eqref{boundary} admits the celebrated Blasius self-similar solutions:
\begin{equation}\label{self-solution}
	[\bar{u},\bar{v}]=\left[f'(z),\frac{1}{\sqrt{2x}}\left(zf'(z)-f(z)\right)\right],\quad z=\frac{y}{\sqrt{2(x+1)}},
\end{equation}
where the similarity function $f(z)$ satisfies the Blasius boundary value problem:
\begin{equation}\label{Blasius}
	\begin{gathered}
		f'''+ff''=0,\\
		f(0)=0, \quad f'(0)=0, \quad f'(\infty)=1.
	\end{gathered}
\end{equation}

First obtained through similarity reduction by Blasius \cite{blasius1907grenzschichten} in 1908, this ODE solution provides accurate predictions for laminar boundary layer development and serves as a fundamental benchmark in viscous flow analysis.

This paper investigates the asymptotic behavior of solutions to the Prandtl system (\ref{prandtl})-(\ref{boundary}) as $x\rightarrow \infty$. We first review key developments in the field. 

Building upon Prandtl's boundary layer theory \cite{prandtl}, Blasius \cite{blasius1907grenzschichten} obtained the first self-similar solution, while Weyl \cite{weyl1942differential} established fundamental existence theorems for the associated ordinary differential equations. A seminal breakthrough was achieved by Oleinik \cite{O1} through the application of Crocco transformation, proving local well-posedness for unsteady Prandtl equations under monotonicity assumptions. These results were systematically organized in the monograph by Oleinik and Samokhin \cite{1999Mathematical}. In a landmark study, Xin and Zhang \cite{XZ} established the global existence of weak solutions under favorable pressure gradient conditions. Notably, G{\'e}rard-Varet \cite{gerard2010ill} constructed counterexamples demonstrating ill-posedness in non-monotonic regimes. Subsequent advances by Alexandre, Wang, Xu and Yang \cite{alexandre2015well} and Masmoudi and Wong \cite{masmoudi2015local} independently re-established local well-posedness in monotonic classes using energy methods. Liu, Wang and Yang \cite{liu2017well} extended two-dimensional monotonicity results to three-dimensional flows with parallel streamlines, with subsequent work in \cite{liu2016ill} demonstrating analogous 3D ill-posedness phenomena to those in \cite{gerard2010ill}. Recently, Xin, Zhang and Zhao \cite{xin2024global} proved that the global week solution is regular by the ultra-parabolic equation theory. In analytic function spaces, Sammartino and Caflisch \cite{sammartino1998zero} first established local well-posedness, sparking subsequent developments documented in recent works by \cite{gerard2015well,paicu2021global,wang2024global,li2020well,li2022well}.

Oleinik's seminal work \cite{O1} employed the von Mises transformation to establish local well-posedness for steady Prandtl equations, extending to global existence under favorable pressure gradient conditions. Recent advances in steady inviscid limit analysis have catalyzed multifaceted research directions. Iyer \cite{iyer2020global} demonstrated asymptotic stability of Blasius solutions under perturbed initial data. Guo and Iyer \cite{guo2021regularity} established high-order regularity of local-in-$x$ Prandtl solution. Under favorable pressure conditions, Wang and Zhang \cite{wang2021global} proved that the global-in-$X$ solution is smooth. Furthermore, they showed the convergence of pressureless solutions to the Blasius profiles with concave initial profiles. Fei, Gao, Lin and Tao \cite{fei2023prandtl} obtained existence in periodic domains, while Iyer and Masmoudi \cite{iyer2022reversal} conducted stability analysis of reverse flow regimes. Gao, Zhang and Zhao \cite{GZZ} established the existence of wedge-type flows, with stability analysis by Iyer \cite{iyer2024stability}. Gao and Xin \cite{gao2023prandtl} characterized global existence and far-field asymptotics behavior for Prandtl flow in converging channels. Contemporary breakthroughs in flow separation mechanics are captured in recent works \cite{dalibard2019separation,shen2021boundary,collot2022singularity}.

Through the von Mises transformation (\ref{vM}), we define $\phi(X,Y)=u^2(x,y)-\bar{u}^2(x,y)$, which satisfies the governing equation
\begin{equation}\label{phi}
    \phi_X - u\phi_{YY} + \mathcal{A}\phi = 0,
\end{equation}
where $\mathcal{A}(X,Y)=\frac{-2\bar{u}_{yy}}{\bar{u}(\bar{u}+u)}$, with boundary conditions
\begin{equation}
    \phi(0,Y)=\phi_0(Y),\quad \phi(X,0)=\phi(X,\infty)=0.
\end{equation}

Serrin's classical result \cite{serrin1967asymptotic} established asymptotic convergence via maximum principle:
\begin{theorem}[Serrin]
    For solutions $u(x,y)$ of the pressureless Prandtl system (\ref{prandtl})-(\ref{boundary}),
    \begin{equation}
        \lim_{x\to\infty}\|u(x,y)-\bar{u}(x,y)\|_{L_y^\infty} = 0.
    \end{equation}
\end{theorem}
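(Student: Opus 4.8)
The plan is to work in the von Mises variables \eqref{vM}, where the problem reduces to the scalar degenerate parabolic equation $W_X-\sqrt W\,W_{YY}=0$ for $W=u^2$ (with $W(X,0)=0$, $W(X,\infty)=1$), and to trap $W$ between two $X$-translates of the Blasius self-similar solution. Indeed, the Blasius flow \eqref{self-solution} becomes the profile $\bar W(X,Y)=g\!\left(Y/\sqrt{2(X+1)}\right)$, where $g(\eta)=\bigl(f'(f^{-1}(\eta))\bigr)^2$ solves $\sqrt g\,g''+\eta g'=0$ and inherits $g(0)=0$, $g(\infty)=1$, $g'>0$, $g''\le 0$; every shift $\bar W(X+a,\cdot)$ with $a>-1$ is again an exact solution carrying the same lateral data. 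By Oleinik's comparison principle for this equation, two solutions ordered at one station and carrying ordered lateral data stay ordered, and this monotone structure is the engine of the argument.

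First I would produce shifts $a_-\ge a_+>-1$ and a station $X_1\ge 0$ with $\bar W(X_1+a_-,\cdot)\le W(X_1,\cdot)\le \bar W(X_1+a_+,\cdot)$ on $Y>0$. Near $Y=0$ this is a comparison of wall slopes: $W(X,\cdot)$ has slope $2u_y(x,0)$ and $\bar W(X+a,\cdot)$ has slope $\sqrt2\,f''(0)/\sqrt{X+a+1}$ at $Y=0$, and since $u'_0(0)>0$ (so $u_y(x,0)$ is finite, and small for large $x$ by Theorem \ref{Oleinik}) one can choose $a_\pm$ and $X_1$ so that these slopes straddle; away from the wall both $W$ and $\bar W$ are increasing and tend to $1$, which localizes where the ordering has to be checked, and there the concavity and explicit tail of $g$ do the job. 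The comparison principle then gives
\begin{equation*}
	\bar W(X+a_-,Y)\ \le\ W(X,Y)\ \le\ \bar W(X+a_+,Y),\qquad X\ge X_1,\ Y>0 .
\end{equation*}
Now $\bar W(X+a,Y)-\bar W(X,Y)=g\!\left(\tfrac{Y}{\sqrt{2(X+a+1)}}\right)-g\!\left(\tfrac{Y}{\sqrt{2(X+1)}}\right)$, and since $g$ is bounded and Lipschitz while $\left|\tfrac{Y}{\sqrt{2(X+a+1)}}-\tfrac{Y}{\sqrt{2(X+1)}}\right|=O\!\big(|a|\,Y\,X^{-3/2}\big)$ and $g$ is essentially constant equal to $1$ once its argument is large, this difference tends to $0$ as $X\to\infty$, uniformly in $Y$. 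Squeezing, $\|W(X,\cdot)-\bar W(X,\cdot)\|_{L^\infty_Y}\to 0$.

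It remains to return to the physical variable. Since $u=\sqrt W$ and $\bar u=\sqrt{\bar W}$ through their respective stream functions — each a smooth increasing bijection of $y$ by the monotonicity $u_y>0$ and Theorem \ref{Oleinik} — the elementary bound $|\sqrt a-\sqrt b|\le\sqrt{|a-b|}$ for $a,b\ge0$ together with control of the change of variables between the two stream coordinates (which are themselves close once $W$ and $\bar W$ are) yields $\|u(x,\cdot)-\bar u(x,\cdot)\|_{L^\infty_y}\to0$. The main obstacle I expect is twofold: rigorously justifying the comparison principle and, above all, securing the initial trapping in the far field $Y\to\infty$, where both profiles approach $1$ and the sandwich is most delicate — one may need to first let the flow regularize over a short streamwise interval, or invoke a supplementary boundedness/decay property of the data, to match the Blasius tail; and faithfully converting the von Mises estimate into the physical-space statement, since $y\mapsto Y$ is not an isometry. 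An equivalent route, worth recording, is to pass to self-similar variables $\tau=\log(X+1)$, $\eta=Y/\sqrt{2(X+1)}$, in which $\psi=W-\bar W$ solves $2\psi_\tau=\sqrt V\,\psi_{\eta\eta}+\eta\psi_\eta+c\psi$ with $c=g''/(\sqrt V+\sqrt g)\le 0$ and zero boundary data, so that $\|\psi(\tau,\cdot)\|_{L^\infty_\eta}$ is nonincreasing; a compactness-plus-strong-maximum-principle rigidity argument then forces its limit to vanish, the delicate point once more being uniform-in-$\tau$ control of $\psi$ near $\eta=0$ (where the equation degenerates) and near $\eta=\infty$.
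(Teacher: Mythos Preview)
The paper does not give its own proof of this statement: it is quoted as Serrin's classical 1967 result and serves only as historical background for the sharp-rate theorems that follow. So there is no in-paper argument to compare against; all one learns from the paper is that Serrin's proof proceeds ``via maximum principle,'' which is indeed the spirit of your proposal.

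That said, your primary route---sandwiching $W$ between two $X$-translates $\bar W(X+a_\pm,\cdot)$ of Blasius---has a genuine gap at the trapping step, and it is not merely technical. Every Blasius translate approaches $1$ with a Gaussian tail $1-\bar W(X+a,Y)\sim \exp\bigl(-cY^2/(X+a+1)\bigr)$, whereas Oleinik's hypotheses permit $u_0\to1$ at an arbitrary (say polynomial) rate and do not forbid $u_0>1$. If $1-u_0$ decays slower than every Gaussian, then for every choice of $a_-$ one has $W_0(Y)<\bar W(a_-,Y)$ once $Y$ is large, so no lower sandwich exists; if $u_0>1$ somewhere, no Blasius translate (all strictly below $1$) can dominate $W_0$, so no upper sandwich exists. ``Letting the flow regularize first'' does not obviously help, since there is no a~priori mechanism forcing $W(X_1,\cdot)$ to acquire the Blasius tail after a finite $X_1$. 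You have correctly flagged this obstacle, but it is fatal to the translate approach at the stated level of generality. Your alternative self-similar route is much closer to how the paper handles its own (stronger, but more restrictive) Theorems~\ref{main1}--\ref{main2}; note, however, that even there the paper imposes $|u_0-1|\lesssim e^{-\varepsilon y^2}$ precisely to tame the tail, so neither your sketch nor the paper's machinery recovers Serrin's result in full generality---for that one must consult Serrin's original argument.
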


Iyer \cite{iyer2020global} derived explicit decay estimates for the difference $\phi(X,Y)$ and its derivatives when the initial data $\phi_0(Y)$ is small in suitable weighted Sobolev spaces with high regularities by using the energy method for the equation (\ref{phi}). 

Wang and Zhang \cite{wang2023asymptotic} obtained the decay rate
\begin{equation}
	\left|u(x,y)-\bar{u}(x,y)\right|\lesssim \frac{\ln(x+e)}{\sqrt{x+1}}e^{-\frac{cy^2}{x+1}}
\end{equation}
without smallness assumptions but with a stronger localization condition, which relies on the infinity behavior of the Blasius profile. And they established decay estimates for higher-order derivatives of $u$ under concave initial data conditions. However, the convergence rates in \cite{iyer2020global, wang2023asymptotic} are not optimal.

The recent breakthrough by Jia, Lei and Yuan \cite{jia2024sharp} established the sharp decay
\begin{equation}
    \|u(x,y)-\bar{u}(x,y)\|_{L_y^\infty} \lesssim (x+1)^{-1},
\end{equation}
when the initial data is a small localized perturbation of the Blasius profile by using the energy method. More precisely, they suppose that $u_0$ satisfies
\begin{equation}
	(1+y)(u_0-\bar{u}_0)\in L_y^1, \quad \sum_{i\leq 2}^{} \left\|(1+y) \partial_y^i(u_0-\bar{u}_0)\right\|_{L_y^2} \leq \kappa \ll 1.
\end{equation}
Furthermore, they also prove the sharp convergence rate for general initial data, with the assumption that the initial data $u_0(y)$ converge to the outer flow at the following rate:
\begin{equation}
	\left|u_0(y)-1\right|\lesssim \frac{1}{1+y^4}.
\end{equation}

We rigorously establish the sharp $\frac{1}{x}$ decay rate for solutions $u$ and obtain optimal decay estimates for arbitrary higher-order derivatives. Our framework encompasses two distinct scenarios: localized perturbations of Blasius profiles and general initial data configurations. The sharpness of these decay rates is explicitly demonstrated in Example \ref{example} through constructive verification.

\paragraph{Localized Perturbation Case:} 
\begin{theorem}\label{energy1}
Assume the initial perturbation $u_0-\bar{u}$ satisfies the weighted Sobolev bounds:
\begin{equation}\label{assumption}
    \sum_{j=0}^{2K_0}\left\| e^{\frac{y^2}{8}}\partial_y^j (u_0-\bar{u}_0) \right\|_{L^2_y} \leq \kappa(\varepsilon) \ll 1.
\end{equation}
Assume further the higher-order compatibility conditions at the corner $(x, y) = (0, 0)$. Then the following asymptotic stability results hold for all $(x,y) \in D$ with $2i+j \leq 2K_0-1$:
\begin{equation}
    \left\|\partial_x^i \partial_y^j (u-\bar{u})(x,y)\right\|_{L_y^\infty} \lesssim (x+1)^{-1-i-\frac{j}{2}}.
\end{equation}
\end{theorem}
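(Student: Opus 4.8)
The plan is to work with the von Mises reformulation and study $\phi(X,Y) = u^2 - \bar{u}^2$, which satisfies the parabolic equation \eqref{phi}. The key structural fact — already hinted at in the abstract — is that the natural weight and the sharp decay rate are dictated by the first eigenfunction of the linearized operator. After the change of variables $z = Y/\sqrt{2(X+1)}$, $s = \ln(X+1)$ (so the Blasius flow becomes stationary), the linearization of \eqref{phi} around the self-similar solution becomes an autonomous operator $\mathcal{L}$ of the form $\partial_s - f'(z)\partial_{zz} + (\text{drift}) + \mathcal{A}$, and one checks that $\mathcal{L}$ has a one-dimensional kernel direction with eigenvalue exactly $1$ corresponding to $x$-translation of the Blasius profile; the associated eigenfunction $\Phi(z)$ (positive, with Gaussian-type decay like $e^{-z^2/2}$ after accounting for the $f'$ coefficient) is what produces the weight $e^{y^2/8}$ — more precisely $e^{z^2/4}$-type growth in the new variables — appearing in \eqref{assumption}. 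I would first establish the spectral picture: self-adjointness of $\mathcal{L}$ in the weighted space $L^2(\Phi^{-1}\,dz)$, the spectral gap (the rest of the spectrum lies above $1$, in fact beyond $3/2$ so that higher derivatives gain further powers), and the exact form of $\Phi$ and its asymptotics at $0$ and $\infty$.

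The second step is the weighted energy estimate. Multiplying \eqref{phi} (in self-similar variables) by $\phi$ times the critical weight $\Phi^{-1}$ and integrating, the spectral gap yields $\frac{d}{ds}\|\phi\|_{\text{weighted}}^2 \le -2(1-\delta)\|\phi\|_{\text{weighted}}^2 + (\text{lower order})$, hence exponential decay $\|\phi\|^2 \lesssim e^{-2s} = (X+1)^{-2}$, i.e. $\|u - \bar u\|_{L^\infty_y} \lesssim (x+1)^{-1}$ after translating back and using Sobolev embedding in $z$. The smallness $\kappa(\varepsilon) \ll 1$ is needed to absorb the quadratic/transport terms hidden in $\mathcal A$ (which itself depends on $u$, not just $\bar u$) and to close the bootstrap; the higher-order compatibility conditions ensure that $\partial_x^i\partial_y^j\phi$ are continuous up to the corner so that the energy identities are legitimate from $s=0$. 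One then propagates this to derivatives: commuting $\partial_z^j$ (equivalently $\partial_y^j$ after scaling) through the equation, each differentiation lands the energy in a higher part of the spectrum, gaining an extra factor $e^{-s/2}$ per derivative, which after unwinding the parabolic scaling $Y = \sqrt{2(X+1)}\,z$ gives the stated $(x+1)^{-1-i-j/2}$; the $x$-derivatives are converted to $y$-derivatives plus lower-order terms using the equation itself, costing one power of $(x+1)^{-1}$ each as the equation is first order in $x$ and second order in $y$.

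The main obstacle I anticipate is twofold. First, the quasilinearity: the coefficient $u\phi_{YY}$ and $\mathcal A = -2\bar u_{yy}/(\bar u(\bar u + u))$ both involve the unknown $u$, so the "linearized operator" is only an approximation and one must carefully estimate the difference $u - \bar u = \phi/(\ u + \bar u)$ in the weighted norm — here the positivity $u_y > m > 0$ near $y=0$ from Theorem \ref{Oleinik} is essential to control $1/(u+\bar u)$, and near $y=0$ the weight $\Phi^{-1}$ degenerates in a way that must be matched against the vanishing of $\phi$ (since $\phi \sim y^2$ there). Second, closing the estimates uniformly up to the corner $(0,0)$: the compatibility conditions and the parabolic smoothing of \eqref{phi} must be invoked to guarantee that the top-order weighted norm is finite at $s = 0$, and a standard but delicate induction on the order of derivatives — differentiating the equation, isolating the good sign from the spectral gap, and treating all commutator terms as lower order — is required. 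I would handle the boundary layer near $z = 0$ by a Hardy-type inequality adapted to the degenerate weight, and the far field $z \to \infty$ by the fast Gaussian decay of both $\Phi$ and the Blasius correction $f'(z) - 1$.
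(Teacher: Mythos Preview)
Your overall strategy matches the paper's: pass to self-similar (modified von Mises) coordinates $\xi = \ln(x+1)$, $\psi = \int_0^y u/\sqrt{x+1}\,dy'$, identify that the linearized operator $\mathcal{L}(v) = -\sqrt{\bar\omega}\,v_{\psi\psi} - \tfrac12\psi v_\psi - \tfrac{\bar\omega_{\psi\psi}}{2\sqrt{\bar\omega}}v$ has principal eigenvalue $1$ with eigenfunction $\psi\bar\omega_\psi$, and run a weighted energy estimate converting this into the sharp rate $e^{-\xi} = (x+1)^{-1}$. Two points deserve correction.

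First, the weight is not $\Phi^{-1}$ but the integrating factor putting $\mathcal{L}$ in divergence form: with $\rho = \exp\bigl(\int_0^\psi \tfrac{s}{2\sqrt{\bar\omega}}\,ds\bigr)$ and $A = \rho/\sqrt{\bar\omega}$ one has $A\mathcal{L}(v) = -(\rho v_\psi)_\psi - \tfrac{\bar\omega_{\psi\psi}}{2\sqrt{\bar\omega}}Av$, the spectral fact becomes the variational inequality $\int\rho v_\psi^2 - \int\tfrac{\bar\omega_{\psi\psi}}{2\sqrt{\bar\omega}}Av^2 \ge \int Av^2$, and the working norm is $\|\sqrt{A}\,v\|_{L^2_\psi}$. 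Since $\bar\omega_\psi = C/\rho$, the weight $A$ differs from $(\psi\bar\omega_\psi)^{-1}$ by a factor $\psi/\sqrt{\bar\omega}$; this matters near $\psi=0$.

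Second, and more substantively: your claim that ``each differentiation lands the energy in a higher part of the spectrum, gaining an extra factor $e^{-s/2}$'' is not the mechanism, and as stated it would fail. Differentiating in the self-similar variable does \emph{not} improve the exponential rate: in the paper $\tilde\omega$, $\tilde\omega_\psi$, $\tilde\omega_\xi$, $\sqrt{\bar\omega}\,\tilde\omega_{\psi\psi}$ all decay (in their respective weighted $L^2$ norms) at the \emph{same} rate $e^{-\xi}$. The extra powers $(x+1)^{-j/2}$ and $(x+1)^{-i}$ in the final estimate come entirely from the Jacobian $\partial_y = \tfrac{u}{\sqrt{x+1}}\partial_\psi$ and $\partial_x = (x+1)^{-1}\bigl(\partial_\xi + \text{drift}\bigr)$ when unwinding the coordinate change. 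Note also that $\tilde\omega_\psi$ does not satisfy the Dirichlet condition at $\psi=0$, so it does not sit in the same spectral problem as $\tilde\omega$; commuting $\partial_\psi$ through $\mathcal{L}$ produces genuine lower-order terms rather than a shift to a higher eigenvalue.

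The paper handles the nonlinear closure differently from your bootstrap sketch: it builds a composite solution space $\|\cdot\|_{\mathbb{X}} = \sum_i \|\cdot\|_{X_i}$ where only the lowest norm $X_0$ carries the sharp weight $e^{\xi}$ and all higher-derivative norms carry a deliberately \emph{suboptimal} weight $e^{3\xi/4}$. This slack is what makes the nonlinear term $F$ (which involves $\tilde\omega\tilde\omega_{\psi\psi}/\sqrt{\bar\omega}$, hence a dangerous $1/\sqrt{\bar\omega}$ near the boundary) quadratically small in $\|\cdot\|_{\mathbb{X}}$, so that a contraction-mapping argument closes. Only \emph{after} this is in hand does the paper bootstrap each derivative norm individually up to the sharp $e^{-\xi}$ by re-running the energy identity with test functions $\tilde\omega e^{5\xi/2}$, $\tilde\omega_\xi e^{5\xi/2}$, etc. Your proposal does not contain this two-stage structure, and without it the bound on the nonlinearity at the sharp rate is not evident.
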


\paragraph{General Initial Data Case:}
\begin{theorem}\label{main1}
Let $u_0(y)$ satisfy Theorem \ref{Oleinik}'s conditions with the exponential decay property:
\begin{equation}\label{suppose1}
    |u_0(y)-1| \lesssim_\varepsilon e^{-\varepsilon y^2}
\end{equation}
for $0 < \varepsilon \ll 1$. Then the following asymptotic stability result holds for all $(x,y) \in D$:
\begin{equation}
    \left\|u(x,y)-\bar{u}(x,y)\right\|_{L_y^\infty} \lesssim_\varepsilon (x+1)^{-1}.
\end{equation}
\end{theorem}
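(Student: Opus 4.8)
The plan is to establish Theorem~\ref{main1} via a sharp maximum principle argument applied to the equation \eqref{phi} for $\phi = u^2 - \bar u^2$, built around a carefully chosen comparison (barrier) function that captures the exact $(x+1)^{-1}$ decay. First I would pass to self-similar variables: introduce $z = y/\sqrt{2(x+1)}$ and a rescaled unknown $\psi(X,z)$ so that $\phi(X,Y) = (x+1)^{-1}\psi$ (with $Y$ expressed through $z$), turning \eqref{phi} into an equation of the form $(x+1)\psi_X = \mathcal{L}\psi$, where $\mathcal{L}$ is the linearized Blasius operator $\mathcal{L} = \bar u\,\partial_{zz} + (\text{drift}) \,\partial_z + (\text{zeroth order})$ acting in the $Y$-variable (or equivalently in $z$ after the von Mises change of variables). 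The substitution $\tau = \ln(x+1)$ makes this an autonomous parabolic equation $\psi_\tau = \mathcal{L}\psi$, so the large-$x$ behavior is governed by the spectral properties of $\mathcal{L}$: the principal eigenvalue and its positive first eigenfunction $\Phi_1$. The key structural fact I would exploit is that the Blasius profile supplies an explicit element of the kernel-like structure — differentiating the self-similar scaling relation shows that $x\partial_x \bar u^2 + \tfrac12 y \partial_y\bar u^2$ (essentially $z\bar u \bar u_z$) is annihilated by the relevant operator, which pins down the eigenvalue that produces exactly the $(x+1)^{-1}$ rate and identifies $\Phi_1$ in terms of Blasius quantities.

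The main steps, in order, are: (1) record the precise form of $\mathcal{A}$ and of $\mathcal{L}$, and verify using \eqref{Blasius} and the infinity asymptotics of $f$ (namely $f'(z) \to 1$, $f(z) \sim z - \beta$, $f''(z), f'''(z)$ decaying like Gaussians) that $\mathcal{L}$ has a positive principal eigenfunction $\Phi_1(Y)$ with eigenvalue $\lambda_1$ corresponding to decay $(x+1)^{-\lambda_1}$, and that $\lambda_1 = 1$ is achieved (this is where the self-similar derivative of Blasius enters); (2) construct a supersolution of the form $\overline{\phi}(X,Y) = C(x+1)^{-1}\Phi_1(Y)\,\theta(X)$ with $\theta$ a slowly varying correction (e.g.\ involving $\ln(x+e)$ or a constant, chosen so that the lower-order error terms from $\mathcal{A}$ — which decay faster because $\bar u_{yy}$ is Gaussian — are absorbed); (3) verify the initial comparison $|\phi_0(Y)| \le \overline{\phi}(0,Y)$, which is exactly where hypothesis \eqref{suppose1} is used: $|u_0 - 1| \lesssim e^{-\varepsilon y^2}$ together with $|\bar u_0 - 1| \lesssim e^{-cy^2}$ gives $|\phi_0| \lesssim e^{-\varepsilon' y^2}$, and one needs $\Phi_1(Y)$ to not decay faster than this Gaussian rate so that a large constant $C$ dominates at $X=0$; (4) apply the maximum principle (Oleinik-type, valid for the degenerate parabolic operator $\partial_X - u\partial_{YY} + \mathcal{A}$ on $\Omega$, using the boundary conditions $\phi = \bar\phi = 0$ at $Y = 0, \infty$) to conclude $|\phi(X,Y)| \le \overline{\phi}(X,Y)$, hence $|\phi| \lesssim (x+1)^{-1}$; (5) convert back, noting $|u - \bar u| = |\phi|/(u + \bar u) \lesssim |\phi|$ since $u + \bar u$ is bounded below away from $Y=0$ (and near $Y = 0$ one uses $u_y > m > 0$ from Theorem~\ref{Oleinik}), yielding the stated $L^\infty_y$ bound.

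The hard part will be step (1)–(2): identifying the correct weight/comparison function tied to the first eigenfunction $\Phi_1$ of the linearized operator, and in particular controlling the variable-coefficient zeroth-order term $\mathcal{A}\phi = \frac{-2\bar u_{yy}}{\bar u(\bar u + u)}\phi$, which is not autonomous because it depends on the unknown $u$ through the denominator. I expect to handle this by splitting $\mathcal{A}$ into a ``frozen'' Blasius part $\mathcal{A}_\infty = \frac{-2\bar u_{yy}}{2\bar u^2} = -\bar u_{yy}/\bar u^2$ (which, after the self-similar rescaling, becomes a genuine potential term in $\mathcal{L}$ — and one should check its sign: since $f'' > 0$, $\bar u_{yy} = \bar u_{zz}/(2(x+1)) > 0$, so $\mathcal{A}_\infty < 0$, which is favorable for building a supersolution) plus a remainder $\mathcal{A} - \mathcal{A}_\infty$ that is a product of a Gaussian-decaying factor $\bar u_{yy}$ and the bounded quantity $(\bar u - u)/(\bar u^2(\bar u + u))$; a crude bound $|\mathcal{A} - \mathcal{A}_\infty| \lesssim |\phi|\,\bar u_{yy}$ shows this remainder is quadratically small in $\phi$ and can be absorbed by a bootstrap on the already-established (non-sharp) decay from Serrin's theorem or from \cite{wang2023asymptotic}. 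An alternative to the pure maximum principle for step (2)–(4), should the barrier construction prove delicate near $Y = 0$ where $u$ vanishes and the equation degenerates, is to combine the maximum principle in the region $\{Y \ge \delta\}$ with a separate boundary-layer estimate near $Y = 0$ using the Hopf-type lower bound $u_y > m$; I would keep this as a fallback. A secondary technical point is justifying that the comparison principle applies up to $Y = \infty$, which requires the decay of both $\phi$ and the barrier to be genuine (uniform) — this is guaranteed by \eqref{suppose1} propagating forward, a fact already available from the weaker estimates in the literature quoted in the introduction.
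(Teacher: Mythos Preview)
Your overall strategy matches the paper's: a maximum-principle argument in (a variant of) von Mises coordinates, with the sharp barrier built from the first eigenfunction of the linearized operator and the eigenvalue $\lambda_1 = 1$ producing exactly the $(x+1)^{-1}$ rate. The paper's modified von Mises coordinates $(\xi,\psi) = \bigl(\ln(x+d),\,\int_0^y u/\sqrt{x+d}\,dy'\bigr)$ are precisely your classical von Mises composed with the self-similar rescaling, and in those variables the eigenfunction is $\psi\bar\omega_\psi$. One sign slip to correct: $\bar u = f'(z)$ gives $\bar u_{yy} \propto f'''(z) = -ff'' < 0$, so $\mathcal{A} > 0$, not $< 0$; the positive sign is what the maximum principle actually needs, so the approach survives but your stated reasoning does not.

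There are two genuine gaps. First, Serrin's theorem gives \emph{no} rate, so it cannot feed the bootstrap: to absorb the remainder $\mathcal{A} - \mathcal{A}_\infty$ into the correction $\theta$, you need the preliminary decay of $\phi$ to be integrable in $\tau = \ln(x+1)$ (e.g.\ $O(e^{-\alpha\tau})$), otherwise $\theta$ is not bounded below and the barrier loses the sharp rate. The paper does not quote an external estimate; it first proves directly, via a separate and simpler barrier $M e^{-\alpha\xi}e^{-\mu\psi^2}\bar\omega$ (Lemma~\ref{w1}), that $|\tilde\omega| \lesssim e^{-\alpha\xi}\bar\omega$ for some small $\alpha,\mu > 0$, and only then runs the sharp barrier $M e^{-\xi}\psi\bar\omega_\psi\,e^{-B\exp(-\delta\xi)}$ whose correction factor $e^{-B\exp(-\delta\xi)}$ absorbs $O(e^{-\alpha\xi})$ errors. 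Second, your step~(3) fails as written: $\Phi_1 \sim \psi\bar\omega_\psi$ decays like a fixed Gaussian $e^{-c\psi^2}$, whereas \eqref{suppose1} only gives $|\phi_0| \lesssim e^{-\varepsilon y^2}$ with $\varepsilon$ arbitrarily small, so no constant $C$ achieves $|\phi_0| \le C\Phi_1$ at the initial time. The paper's remedy is the free shift parameter $d$ in $\xi = \ln(x+d)$: taking $d = d(\varepsilon)$ large rescales $\psi \sim y/\sqrt{d}$, so that $e^{-\varepsilon y^2} \sim e^{-\varepsilon d\,\psi^2}$ is dominated by $\Phi_1(\psi)$ at the initial slice $\xi = \ln d$ once $\varepsilon d$ exceeds a fixed constant. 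You should incorporate both the preliminary-rate barrier and the $d$-shift into your plan.
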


\begin{theorem}\label{main2}
Under the assumptions of Theorem \ref{main1} with additional concavity $\partial_y^2 u_0(y) \leq 0$ and higher-order regularity:
\begin{equation}\label{suppose2}
    \left|\partial_y^k u_0(y)\right| \lesssim_\varepsilon e^{-\varepsilon y^2}
\end{equation}
for $1 \leq k \leq 2K_0$ and $0 < \varepsilon \ll 1$. Assume further the higher-order compatibility conditions at the corner $(x, y) = (0, 0)$. Then the following asymptotic stability results hold for all $(x,y) \in D$ with $2i+j \leq 2K_0$:
\begin{equation}
    \left\|\partial_x^i \partial_y^j (u-\bar{u})(x,y)\right\|_{L_y^\infty} \lesssim_\varepsilon (x+1)^{-1-i-\frac{j}{2}}.
\end{equation}
\end{theorem}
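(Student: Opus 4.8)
\textbf{Proof proposal for Theorem \ref{main2}.}

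The plan is to combine the $L^\infty$ decay of $u-\bar u$ from Theorem \ref{main1} with a maximum-principle argument exploiting concavity, and then to bootstrap from the base case to all higher-order derivatives. The first step is to record that under the hypotheses of Theorem \ref{main1} we already have $\|u-\bar u\|_{L^\infty_y}\lesssim_\varepsilon (x+1)^{-1}$; the concavity assumption $\partial_y^2 u_0\le 0$ is preserved by the flow (this is the classical observation that $w:=u_{yy}$ satisfies a parabolic equation with no zeroth-order source once written in von Mises or Crocco variables, so $w\le 0$ on the parabolic boundary forces $w\le 0$ throughout), and this sign information is what upgrades the maximum-principle technique to an optimal rate. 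Concretely, I would work with $\phi=u^2-\bar u^2$ solving \eqref{phi}, note that $\mathcal A(X,Y)=-2\bar u_{yy}/(\bar u(\bar u+u))\ge 0$ because $\bar u_{yy}=f'''\le 0$ for the Blasius profile, and build a supersolution/subsolution pair of the form $\pm C(X+1)^{-1}\Psi(Y/\sqrt{X+1})$ where $\Psi$ is (a truncation of) the first eigenfunction of the linearized self-similar operator — the operator whose eigenfunction governs the sharp weights, exactly as advertised in the abstract. Matching the initial data uses \eqref{suppose1} and the Gaussian localization to dominate $\phi_0$ by $C\Psi$, and the boundary conditions at $Y=0,\infty$ are immediate; the comparison principle for \eqref{phi} (valid since the equation is parabolic with nonnegative zeroth-order coefficient, Oleinik-type regularity from Theorem \ref{Oleinik} giving enough smoothness) then yields $|\phi|\lesssim (X+1)^{-1}\Psi$, hence the base estimate $i=j=0$.

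Next I would handle the spatial (i.e.\ $y$-) derivatives. Differentiating \eqref{phi} in $Y$ repeatedly produces equations of the same parabolic type for $\partial_Y^j\phi$ with lower-order terms that are controlled by the already-established bounds on $\partial_Y^{j'}\phi$ for $j'<j$ together with the derivative bounds $|\partial_y^k u_0|\lesssim_\varepsilon e^{-\varepsilon y^2}$ from \eqref{suppose2} and the corresponding decay of the Blasius derivatives $f^{(k)}$ (which decay like Gaussians). The key scaling heuristic is that each $Y$-derivative costs a factor $(X+1)^{-1/2}$, matching the self-similar variable $z=y/\sqrt{2(x+1)}$, so I expect $\|\partial_Y^j\phi\|_{L^\infty}\lesssim_\varepsilon (X+1)^{-1-j/2}$, and then an interior parabolic (Schauder-type, or De Giorgi--Nash--Moser) estimate on unit-scale parabolic cylinders — rescaled to the self-similar box — converts temporal/$X$-regularity gains into the $\partial_X^i$ bounds: each $X$-derivative costs $(X+1)^{-1}$ via the equation $\phi_X=u\phi_{YY}-\mathcal A\phi$, which expresses $\partial_X^i\partial_Y^j\phi$ in terms of $\partial_Y^{j+2i}\phi$ and lower terms. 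Finally, translating back from $\phi=u^2-\bar u^2=(u-\bar u)(u+\bar u)$ to $u-\bar u$ uses $u+\bar u\ge \bar u\gtrsim \min(1,y)$, which near $y=0$ is handled by the Hopf-lemma lower bound $u_y>m>0$ from Theorem \ref{Oleinik}(ii), so no decay is lost, and the Leibniz rule distributes derivatives without changing the rates. Combining these gives $\|\partial_x^i\partial_y^j(u-\bar u)\|_{L^\infty_y}\lesssim_\varepsilon (x+1)^{-1-i-j/2}$ for $2i+j\le 2K_0$, using $2K_0$ derivatives of the data in \eqref{suppose2} to close the induction (two derivatives lost to the parabolic smoothing at each $X$-level, consistent with the $2i+j$ budget and the loss of one order relative to Theorem \ref{energy1}).

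The main obstacle I anticipate is the construction and control of the sharp comparison function $\Psi$: one must identify the first eigenvalue/eigenfunction of the linearized operator $L g = g'' + (z f''(z)$-type drift$)\,g' + (\text{potential})\,g$ associated with \eqref{phi} in self-similar variables, verify that the associated eigenvalue is exactly $1$ (so that $(X+1)^{-1}\Psi$ is a genuine solution of the linearized equation, not merely a super/sub-solution up to error), and deal with the fact that $\Psi$ will generically not be integrable or will grow at infinity, forcing a careful truncation together with an error-absorption argument that uses the Gaussian tail from \eqref{suppose1}. A secondary difficulty is the nonlinearity: \eqref{phi} has the coefficient $u$ multiplying $\phi_{YY}$ and $u$ itself depends on $\phi$, so the comparison argument is only quasi-linear; one circumvents this by treating $u$ as a known (Oleinik-regular, bounded, positive) coefficient — legitimate because $u$ is already a fixed global solution — but then the eigenfunction analysis must be robust to the fact that the coefficient $u$ is only close to $\bar u$ at rate $(X+1)^{-1}$, which is precisely borderline and must be absorbed into the constant $C$ and possibly a logarithmic safety factor that is then removed by iterating the bound. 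Getting this borderline absorption to close without a spurious $\ln(x+e)$ loss (as appeared in \cite{wang2023asymptotic}) is the crux, and concavity is exactly the extra structural input that makes it work: $\phi\le 0$ (equivalently $u\le\bar u$) eliminates the "bad'' sign in the nonlinear correction and lets the one-sided comparison run with the sharp eigenfunction.
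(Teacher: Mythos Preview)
Your proposal contains a genuine gap in how concavity enters. You claim that $\partial_y^2 u_0\le 0$ forces $\phi\le 0$ (i.e.\ $u\le\bar u$), and that this one-sided sign is what lets the sharp barrier close. This is false: the shifted Blasius profile $\bar u^s(0,y)=f'(y/\sqrt{2s})$ from Example~\ref{example} is concave for every $s>0$, yet for $s<1$ one has $\bar u^s(0,\cdot)>\bar u(0,\cdot)$ pointwise, so $\phi_0>0$. Concavity of $u_0$ says nothing about the sign of $u-\bar u$. In the paper the role of concavity is entirely different: one differentiates the transformed equation~\eqref{omega} in $\xi$ to obtain the equation~\eqref{l2} for $r=\omega_\xi$, whose zeroth-order coefficient is $-\omega_{\psi\psi}/(2\sqrt{\omega})$. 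Since $\sqrt{\omega}\,\omega_{\psi\psi}=2(x+d)u_{yy}$, concavity (propagated from the data) makes this coefficient nonnegative, and \emph{that} is what allows the maximum principle to apply to the derivative quantity $r$. Without concavity the sign of this coefficient is uncontrolled and the comparison for $\omega_\xi$ breaks down.

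Two further points of divergence from the paper. First, the ``borderline absorption'' is not handled by iterating away a log; the paper multiplies the eigenfunction barrier by the factor $e^{-B\exp(-\delta\xi)}$, whose logarithmic $\xi$-derivative contributes an extra $+\delta B e^{-\delta\xi}$ to $\mathcal L(\varphi)/\varphi$ and swallows the $O(e^{-\alpha\xi})$ nonlinear remainder in one step (see the proof of Proposition~\ref{wsd} and Proposition~\ref{wxsd}). Second, the higher-derivative scheme is not ``differentiate \eqref{phi} in $Y$ repeatedly'': the paper first runs the maximum principle on the $\xi$-derivatives $\omega_\xi,\omega_{\xi\xi},\ldots$ (equations \eqref{l2} and its analogues), then recovers $\tilde\omega_\psi$ and $\tilde\omega_{\psi\psi}$ by integrating the first-order ODE $z_\psi+\tfrac{\psi}{2\sqrt{\omega}}z=f/\sqrt{\omega}$ in $\psi$, and only afterwards reads off higher $\psi$-derivatives by differentiating the PDE. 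Your proposed direct $Y$-differentiation would require boundary values of $\partial_Y^j\phi$ at $Y=0$, which are neither zero nor a priori controlled. Finally, your worry that the eigenfunction $\Psi$ grows at infinity is unfounded: $\psi\bar\omega_\psi$ has Gaussian decay by Lemma~\ref{barw}, so no truncation is needed.
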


\paragraph{Optimality Demonstration:}
\begin{example}\label{example}
The convergence rate in our main theorem is optimal, as demonstrated by the following explicit construction. For any $s>0$, the function 
one can check
\begin{equation}
    \bar{u}^s(x,y)=f'\left(\frac{y}{\sqrt{2(x+s)}}\right)
\end{equation}
provides an exact solution to the Prandtl equation (\ref{prandtl}), where $f$ denotes the classical Blasius Blasius similarity function. Through Lemma \ref{lemma2}'s characterization of Blasius profiles, we may select $s=s(\varepsilon)$ such that the initial data $u_s(0,y)$ satisfies both hypotheses (\ref{assumption}), (\ref{suppose1}) and \eqref{suppose2}.

Let $z_1=\frac{y}{\sqrt{2(x+s)}}$, $z_2=\frac{y}{\sqrt{2(x+1)}}$. The essential estimate follows through the analysis:
\begin{align*}
    \left\|\bar{u}^s(x,y)-\bar{u}(x,y)\right\|_{L_y^\infty}&=\left\|f'(z_1)-f'(z_2)\right\|_{L_y^\infty}=\left\|f''(\zeta) (z_1-z_2)\right\|_{L_y^\infty}\\
    &\geq \left\|z_1 f''(\zeta)\right\|_{L_y^\infty} \left(1-\frac{\sqrt{x+s}}{\sqrt{x+1}}\right) \\
    &\geq c(\varepsilon) (x+1)^{-1}
\end{align*}
for some $\zeta$ between $z_1$ and $z_2$. This establishes the sharpness of the $(x+1)^{-1}$ decay rate.

For the derivative estimates with $i,j \geq 0$:
\begin{align*}
    \left\|\partial_x^i \partial_y^j \left(\bar{u}^s-\bar{u}\right)\right\|_{L_y^\infty}&=\left\|\partial_x^i \partial_y^j \left[ f'(z_1)-f'(z_2) \right]\right\|_{L_y^\infty}\\
    &\geq (x+1)^{-i-\frac{j}{2}} \left\|z_2^{i+1} f^{(2+i+j)}(\zeta)\right\|_{L_y^\infty} \left(1-\frac{\sqrt{x+s}}{\sqrt{x+1}}\right)\\
    &\geq c(\varepsilon) (x+1)^{-1-i-\frac{j}{2}}
\end{align*}
for some $\zeta$ between $z_1$ and $z_2$, establishing the sharpness of the $(x+1)^{-1-i-\frac{j}{2}}$ decay rate.
\end{example}

Now we present the key points and main ideas of this paper. First, we introduce the modified von Mises coordinates:
\begin{equation*}
	\xi=\ln(x+1),\quad \psi=\int_{0}^{y}\frac{u(x,y')}{\sqrt{x+1}}dy'.
\end{equation*}
This coordinate transformation leads to the following equation for an unknown function $\omega(\xi,\psi)=u^2(x,y)$:
\begin{equation*}
	\omega_\xi-\sqrt{\omega}\omega_{\psi\psi}-\frac{1}{2}\psi\omega_\psi=0.
\end{equation*}
For the self-similar Blasius profile solution $\bar{u}^2(x,y)=\bar{\omega}(\psi)$, we derive
\begin{equation*}
	\sqrt{\bar{\omega}}\bar{\omega}_{\psi\psi}+\frac{1}{2}\psi\bar{\omega}_\psi=0.
\end{equation*}

Our analysis now focuses on the asymptotic behavior of $\omega$. Defining the remainder $\tilde{\omega}=\omega-\bar{\omega}$, we obtain
\begin{equation}
	\tilde{\omega}_\xi-\sqrt{\bar{\omega}}\tilde{\omega}_{\psi\psi}-\frac{1}{2}\psi\tilde{\omega}_\psi-\frac{\bar{\omega}_{\psi\psi}}{2 \sqrt{\bar{\omega}}}\tilde{\omega}=F,
\end{equation}
where $F$ is the nonlinear term (\ref{nonline}).

Thus, to establish the sharp decay estimate of $\omega$ towards $\bar{\omega}$, the most critical aspect is to determine the principal eigenvalue of the following linear operator:
\begin{equation}
	\mathcal{L}(v)=-\sqrt{\bar{\omega}} v_{\psi\psi}-\frac{1}{2}\psi v_\psi-\frac{\bar{\omega}_{\psi\psi}}{2 \sqrt{\bar{\omega}}} v.
\end{equation}
We will prove in Appendix \ref{app:eigenvalue} that $\mathcal{L}$ possesses a principal eigenvalue of $1$ with corresponding eigenfunction $\psi\bar{\omega}_\psi$.

When considering initial data constituting small localized perturbations of the Blasius profile, we employ energy methods for the weighted error equation:
\begin{equation}
	A \tilde{\omega}_\xi-A\mathcal{L}(\tilde{\omega})= A \tilde{\omega}_\xi-(\rho \tilde{\omega}_\psi)_{\psi}-\frac{\bar{\omega}_{\psi\psi}}{2 \sqrt{\bar{\omega}}} A \tilde{\omega}=AF,
\end{equation}
where $A=\frac{\rho}{\sqrt{\bar{\omega}}}$ and $\rho=exp\left(\int_{0}^{\psi}\frac{s}{2\sqrt{\bar{\omega}(s)}}ds\right)$.

The key point is the following sharp decay estimate:
\begin{equation}
	\left\|\sqrt{A} \tilde{\omega} \right\|_{L_\psi^2}^2 \lesssim e^{-\xi}\left( \left\|\sqrt{A} \tilde{\omega}_0\right\|_{L_\psi^2}^2+\left\|\sqrt{A} F e^{\xi}\right\|_{L_{\xi, \psi}^2}^2\right),
\end{equation}
supported by the critical inequality:
\begin{equation}
	\int_0^{\infty} \rho \tilde{\omega}_\psi^2 d \psi-\int_0^{\infty} \frac{\bar{\omega}_{\psi \psi}}{2 \sqrt{\bar{\omega}}} A \tilde{\omega}^2 d \psi \geq \int_0^{\infty} A \tilde{\omega}^2 d \psi,
\end{equation}
which is derived from the properties of the linear operator $\mathcal{L}$. 

This allows us to establish the stability of the solution $\tilde{\omega}$ in the weighted space $\left\|\tilde{\omega}\right\|_{\mathbb{X}}$ defined in subsequent sections. Consequently, we obtain the sharp decay estimate for $\omega$ and low-decay estimates for its derivatives. We ultimately yield sharp decay estimates through enhancement techniques.

For general initial data, we apply maximum principle techniques to the error equation:
\begin{equation}
	\tilde{\omega}_\xi-\frac{1}{2}\psi\tilde{\omega}_\psi-\sqrt{\omega}\tilde{\omega}_{\psi\psi}-\frac{\bar{\omega}_{\psi\psi}}{\sqrt{\bar{\omega}}+\sqrt{\omega}}\tilde{\omega}=0.
\end{equation}

First, we construct barrier functions reflecting the Blasius profile structure to establish preliminary decay estimates. The crucial step involves developing barrier functions capturing sharp decay rates using the principal eigenfunction $\psi\bar{\omega}_\psi$ of the linear operator $\mathcal{L}$. Additional techniques address nonlinear effects in the error equation.

\begin{remark}
	For wedge flows with $U(x)=(x+1)^m$, $m>0$, analogous analysis applies using the transformation:
	\begin{equation*}
		\xi=\ln(x+1),\quad \psi=\int_{0}^{y}\frac{u(x,y')}{(x+1)^\frac{m+1}{2}}dy'.
	\end{equation*}
	The transformed equation for $\omega(\xi,\psi)=\left(\frac{u(x,y)}{(x+1)^m}\right)^2$ becomes:
	\begin{equation*}
		\omega_\xi-\sqrt{\omega}\omega_{\psi\psi}-\frac{m+1}{2}\psi\omega_\psi +2m\omega -2m =0.
	\end{equation*}
	For Falkner-Skan profile solutions $\left(\frac{\bar{u}}{(x+1)^m}\right)^2=\bar{\omega}(\psi)$, it holds:
	\begin{equation*}
		\sqrt{\bar{\omega}}\bar{\omega}_{\psi\psi}+\frac{m+1}{2}\psi\bar{\omega}_\psi +2m-2m\bar{\omega} =0.
	\end{equation*}
	
	The remainder $\tilde{\omega}=\omega-\bar{\omega}$ satisfies:
	\begin{equation*}
		\tilde{\omega}_\xi-\sqrt{\bar{\omega}}\tilde{\omega}_{\psi\psi}-\frac{m+1}{2}\psi\tilde{\omega}_\psi-\frac{\bar{\omega}_{\psi\psi}}{2 \sqrt{\bar{\omega}}}\tilde{\omega}+2m\tilde{\omega}=F_m,
	\end{equation*}
	where $F_m$ is the nonlinear term.

	Therefore, determining the principal eigenvalue of the operator
    \begin{equation*}
        \mathcal{L}_m (v)=-\sqrt{\bar{\omega}}v_{\psi\psi}-\frac{m+1}{2}\psi v_\psi-\frac{\bar{\omega}_{\psi\psi}}{2\sqrt{\bar{\omega}}}v+2mv
    \end{equation*}
    enables extension of sharp decay estimates to arbitrary $m>0$.
\end{remark}

The remainder of the paper is structured as follows: In Section 2, we will introduce several auxiliary lemmas that will be frequently used in our proofs. In Section 3, we will establish the main Theorem \ref{energy1} via energy methods. The key point is to obtain an optimal estimate based on the properties of the linear operator $\mathcal{L}$. In Section 4, we will prove the main Theorem \ref{main1} by the maximum principle techniques. The key ingredient is to find barrier functions depending on the structure of Blasius profile. In Section 5, we will detail the proof of Theorem \ref{main2} by using the maximum principle. To ensure completeness, Appendix \ref{app:eigenvalue} contains the principal eigenvalue analysis for $\mathcal{L}$.

\begin{notations}
	In this article, we adopt the following notation conventions: We use $A\lesssim B$ means $A\leq C B$ for some absolute constant $C$. The notation $A\sim B$ means $A\lesssim B$ and $B\lesssim A$. Additionally, $A\lesssim_{\star} B$ implies $A\leq C B$ with $C$ dependent on a specific quantity $\star$, i.e. $C=C(\star)$.
\end{notations}

\section{Preliminary}

This section establishes foundational results essential for subsequent analysis. We begin by introducing the modified von Mises coordinates:
\begin{equation}\label{nvM}
	\xi=\ln(x+d), \quad \psi=\int_{0}^{y}\frac{u(x,y')}{\sqrt{x+d}}dy',
\end{equation}
where $d$ is a variable positive parameter. Defining a new unknown function $\omega(\xi,\psi)=u^2(x,y)$, this coordinate system converts the Prandtl system \eqref{prandtl}-\eqref{boundary} in $D=\{0<x<\infty, 0<y<\infty\}$ to the simplified equation:
\begin{equation}\label{omega}
	\omega_\xi-\sqrt{\omega}\omega_{\psi\psi}-\frac{1}{2}\psi\omega_\psi=0
\end{equation}
in the domain $\Omega=\{(\xi,\psi) | \ln d<\xi<\infty, 0<\psi<\infty\}$, with boundary conditions
\begin{equation}
	\begin{gathered}
		\omega(\xi, 0)=0, \quad \omega\left(\ln d, \psi\right)=\omega_0(\psi), \\
		\omega(\xi, \psi) \rightarrow 1 \quad \textit {as} \quad \psi \rightarrow \infty,
	\end{gathered}
\end{equation}
where the initial profile satisfies:
\begin{equation}
	\omega_0\left(\int_{0}^{y}\frac{u_0(y')}{d}dy'\right)=u_0^2(y).
\end{equation}

Building on the proof of Theorem 2.1.1 in \cite{1999Mathematical}, we can easily derive the following properties of $\omega(\xi,\psi)$:
\begin{lemma}\label{ome}
	The transformed solution $\omega(\xi,\psi)$ satisfies:
	\begin{equation*}
		\begin{gathered}
			\left|\omega\right|\leq M,\quad \left|\omega_\psi\right|\leq M, \quad \left|\sqrt{\omega}\omega_{\psi\psi} \right|\leq M,\\
			\omega(\xi,\psi)>0 \quad \text{for} \quad \psi>0,\\
			\omega_\psi \geq m>0 \quad \text{for} \quad 0\leq \psi \leq \psi_0,\\
			\left|\omega_\xi\right|\leq M \psi^{1-\beta} \quad \text{for} \quad 0\leq \psi \leq \psi_0,\quad 0<\beta<\frac{1}{2}.
		\end{gathered}
	\end{equation*}
\end{lemma}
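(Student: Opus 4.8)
The plan is to regard \eqref{omega} as the classical von Mises equation $W_X=\sqrt{W}\,W_{YY}$ studied in \cite{1999Mathematical}, perturbed by the benign linear drift $\tfrac12\psi\omega_\psi$, and to re-run the barrier arguments behind Theorem 2.1.1 there, verifying at each step that the drift does no harm. First I would record the dictionary linking the coordinates \eqref{nvM} to $(x,y)$: since $u>0$ for $y>0$ (Theorem \ref{Oleinik}(1)), for each fixed $\xi$ the map $y\mapsto\psi$ is an increasing $C^1$ bijection of $(0,\infty)$ onto itself with $\partial_y\psi=u/\sqrt{x+d}$, so differentiating $\omega=u^2$ in $\psi$ gives
\begin{equation*}
	\omega_\psi=2\sqrt{x+d}\,u_y,\qquad \sqrt{\omega}\,\omega_{\psi\psi}=2(x+d)\,u_{yy},
\end{equation*}
and \eqref{omega} itself expresses $\omega_\xi$ through $\sqrt{\omega}\,\omega_{\psi\psi}$ and $\psi\omega_\psi$. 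Thus every claim of the lemma is the image, under this parabolic rescaling, of a corresponding statement about $u$; since the Blasius profile already realizes each bound with $\xi$-independent constants, the task is to reproduce the estimates of Theorem \ref{Oleinik} in scaling-invariant form.

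For the zeroth-order bounds, the constants $\omega\equiv 0$ and $\omega\equiv M_0:=\max\{1,\|u_0\|_{L^\infty}^2\}$ are respectively a sub- and a supersolution of \eqref{omega} (both $\sqrt{\omega}\,\omega_{\psi\psi}$ and $\tfrac12\psi\omega_\psi$ vanish on constants), and they bracket the initial and boundary data on $\partial\Omega$, so the comparison principle yields $0\le\omega\le M_0$; in particular $\omega\equiv 1$ solves \eqref{omega}, so the $L^\infty$ bound is uniform in $\xi$. The strict positivity $\omega(\xi,\psi)>0$ for $\psi>0$ is then immediate from $u>0$ for $y>0$ and the bijectivity of $y\mapsto\psi$, or equivalently from the strong maximum principle on any set $\{\psi\ge\delta>0\}$ where \eqref{omega} is uniformly parabolic.

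The remaining estimates are the near-wall ones, obtained by Oleinik's barrier method transplanted to $(\xi,\psi)$. For the lower bound one compares $\omega$ on a strip $\{0\le\psi\le\psi_0\}$ with a subsolution of the form $c\psi$ plus a lower-order correction: the boundary comparison uses $u_y\ge m_0>0$ near the wall (Theorem \ref{Oleinik}(2)), and the interior inequality holds since the drift contributes only $\tfrac12\psi(c\psi)_\psi=\tfrac12 c\psi\ge 0$; this gives $\omega_\psi\ge m>0$ and $\omega\ge m\psi$ on the strip, while $\omega\lesssim\psi$ follows once the upper gradient bound is in hand. For the latter one differentiates \eqref{omega} in $\psi$: $p:=\omega_\psi$ solves
\begin{equation*}
	p_\xi-\sqrt{\omega}\,p_{\psi\psi}-\tfrac12\psi p_\psi-\Bigl(\tfrac{\omega_{\psi\psi}}{2\sqrt{\omega}}+\tfrac12\Bigr)p=0,
\end{equation*}
and a maximum-principle/barrier argument gives $|\omega_\psi|\le M$; the only delicate point, handled exactly as in \cite{1999Mathematical}, is the possibly singular zeroth-order coefficient $\omega_{\psi\psi}/(2\sqrt{\omega})$ near $\psi=0$, which is tamed using that $\omega_{\psi\psi}\to 0$ there. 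Then $|\sqrt{\omega}\,\omega_{\psi\psi}|\le M$ follows from \eqref{omega} together with the bounds just obtained (and a parallel argument for $\omega_\xi$ away from the wall), or by one further differentiation.

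Finally, the weighted bound on $\omega_\xi$ is read off from \eqref{omega} on $\{0\le\psi\le\psi_0\}$: the drift term obeys $\tfrac12\psi|\omega_\psi|\lesssim\psi\le\psi^{1-\beta}$, while for $\sqrt{\omega}\,\omega_{\psi\psi}$ one combines $\sqrt{\omega}\lesssim\sqrt{\psi}$ (from $\omega\lesssim\psi$) with a near-wall decay estimate $|\omega_{\psi\psi}|\lesssim\psi^{1/2-\beta}$ to conclude $|\sqrt{\omega}\,\omega_{\psi\psi}|\lesssim\psi^{1-\beta}$. Establishing this last decay of $\omega_{\psi\psi}$ at the degenerate boundary is the main obstacle: it is the $(\xi,\psi)$-incarnation of the corner compatibility condition \eqref{compatibility}, propagated to $\xi>\ln d$, and it is precisely the step in \cite{1999Mathematical} that consumes the compatibility hypothesis through a careful construction of sub/supersolutions matching the boundary data to the appropriate order near $\psi=0$. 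Everything else is a routine transcription of the Oleinik--Samokhin arguments, with $\tfrac12\psi\omega_\psi$ entering only as a lower-order, sign-favorable perturbation.
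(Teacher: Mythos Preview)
Your approach is correct in spirit but takes a substantially harder route than the paper. The paper does not treat \eqref{omega} as a perturbed version of the classical von Mises equation and re-run the Oleinik barriers; instead it simply observes that $\omega$ and $W$ are literally the \emph{same} function $u^2$, written in two coordinate systems related by $\xi=\ln(X+d)$, $\psi=Y/\sqrt{X+d}$. The chain rule then gives the dictionary
\[
\omega_\psi=\sqrt{X+d}\,W_Y,\qquad \sqrt{\omega}\,\omega_{\psi\psi}=(X+d)\sqrt{W}\,W_{YY},\qquad \omega_\xi-\tfrac12\psi\,\omega_\psi=(X+d)W_X,
\]
and every bound in the lemma is read off directly from the already-proved bounds on $W$, $W_Y$, $\sqrt{W}W_{YY}$, $W_X$ in \cite[Theorem~2.1.1]{1999Mathematical}. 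In particular, the weighted estimate $|\omega_\xi|\le M\psi^{1-\beta}$ follows immediately from $|W_X|\le M Y^{1-\beta}$ via the last identity together with $|\tfrac12\psi\omega_\psi|\lesssim\psi$, with no need for a separate near-wall decay estimate on $\omega_{\psi\psi}$.

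So the step you flag as ``the main obstacle''---propagating the compatibility condition to get $|\omega_{\psi\psi}|\lesssim\psi^{1/2-\beta}$ directly in $(\xi,\psi)$---is entirely avoidable: that work was already done once in $(X,Y)$ and is inherited for free. Your plan would succeed, but it amounts to reproving Theorem~2.1.1 of \cite{1999Mathematical} in rescaled variables, whereas the paper simply invokes it. The drift $\tfrac12\psi\omega_\psi$ is not a perturbation to be tamed; it is exactly the artifact of the rescaling $Y\mapsto\psi=Y/\sqrt{X+d}$.
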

\begin{proof}
	From \cite[Theorem 2.1.1]{1999Mathematical}, the original solution $W(X,Y)$ satisfies for any $X_0>0$:
	\begin{equation*}
		\begin{gathered}
			\left|W\right|\leq M, \quad \left|W_Y\right|\leq M, \quad \left|\sqrt{W}W_{YY}\right|\leq M,\\
			W(X,Y)>0 \quad \textit{for} \quad Y>0,\\
			W_Y \geq m>0 \quad \textit{for} \quad 0\leq Y \leq Y_0,\\
			\left|W_X\right|\leq M Y^{1-\beta} \quad \textit{for} \quad 0\leq Y \leq Y_0, \quad 0<\beta<\frac{1}{2}.
		\end{gathered}
	\end{equation*}

	The coordinate transformations \eqref{vM} and \eqref{nvM} induce the following differential correspondences:
	\begin{equation*}
		\begin{gathered}
			\omega(\xi,\psi)=u^2(x,y)=W(X,Y),\\
			\omega_\psi=2\sqrt{x+d} u_y=\sqrt{X+d} W_Y,\\
			\sqrt{\omega}\omega_{\psi\psi}=2(x+d)u_{yy}=(X+d)\sqrt{W}W_{YY},\\
			\omega_\xi-\frac{1}{2}\psi \omega_\psi=(X+d)W_X.
		\end{gathered}
	\end{equation*}
	These identifications directly transfer the regularity properties from $W(X,Y)$ to $\omega(\xi,\psi)$.
\end{proof}

The Blasius profile $f(z)$ with similarity variable $z = y/\sqrt{2(x+d)}$ is governed by
\begin{equation*}
	\begin{gathered}
		f'''+ff''=0,\\
		f(0)=0, \quad f'(0)=0, \quad f'(\infty)=1.
	\end{gathered}
\end{equation*}

The fundamental properties of $f(z)$ are summarized in the following lemma:
\begin{lemma}\label{f}
	The Blasius function $f(z)$ satisfies
	\begin{equation*}
		\begin{gathered}
			f(z)>0, \quad 0<f'(z)<1 \quad \text{for}\quad 0< z<\infty,\\
			f''(z)>0 \quad \text{for} \quad 0\leq z<\infty,\quad f''(0)=b_0>0,\\
			f'''(z)<0 \quad \text{for} \quad 0< z<\infty,\quad f'''(0)=0,\\
			1-f'(z)\sim N_1 z^{-1}\exp\left(-\frac{z^2}{2}-N_2 z\right), \quad f''(z)\sim z(1-f'(z)) \quad \text{as} \quad z\rightarrow \infty,
		\end{gathered}
	\end{equation*}
	where $N_1$, $N_2$ are positive constants.
\end{lemma}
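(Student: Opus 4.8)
The plan is to extract every assertion from one first-order consequence of the Blasius ODE. Existence and uniqueness of $f$ solving \eqref{Blasius}, together with its global existence on $[0,\infty)$, is classical, and I would simply invoke the shooting argument of Weyl \cite{weyl1942differential}. Since $f$ solves $f'''=-ff''$, the function $f''$ satisfies the linear ODE $h'=-f(z)\,h$ with $h(0)=b_0:=f''(0)$, so by uniqueness
\begin{equation*}
	f''(z)=b_0\exp\!\Big(-\int_0^z f(s)\,ds\Big),\qquad z\ge 0 .
\end{equation*}
If $b_0=0$ this forces $f''\equiv0$, hence $f'\equiv0$, contradicting $f'(\infty)=1$; if $b_0<0$ then $f''<0$, so $f'$ is strictly decreasing from $f'(0)=0$ and cannot reach $1$ at infinity. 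Therefore $b_0>0$, and consequently $f''(z)>0$ for all $z\in[0,\infty)$. (Since $f\ge0$, the identity also gives $f''\le b_0$, which a posteriori confirms global existence.)

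Because $f''>0$, $f'$ is strictly increasing, so $0=f'(0)<f'(z)<f'(\infty)=1$ for $0<z<\infty$; hence $f$ is strictly increasing and $f(z)>f(0)=0$ for $z>0$. Differentiating, $f'''=-ff''$ gives $f'''(0)=-f(0)f''(0)=0$ and $f'''(z)=-f(z)f''(z)<0$ for $z>0$. This settles the first three groups of claims.

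For the far-field behaviour I would run a two-stage bootstrap. \emph{Stage one:} as $f'\to1$, fix $z_0$ with $f'\ge\tfrac12$ on $[z_0,\infty)$, so $f(z)\ge\tfrac12(z-z_0)$ there and $\int_0^z f\ge cz^2$ for $z$ large; the identity then yields the crude bound $f''(z)\lesssim e^{-cz^2}$. Since $f''>0$ and $f'\uparrow1$, one has $1-f'(z)=\int_z^\infty f''(s)\,ds\lesssim e^{-cz^2}$, and in particular $\beta_*:=\int_0^\infty\big(1-f'(s)\big)\,ds<\infty$. \emph{Stage two:} this gives the refined expansion $f(z)=z-\beta_*+r(z)$ with $r(z)=\int_z^\infty(1-f')\to0$ super-exponentially, whence
\begin{equation*}
	\int_0^z f(s)\,ds=\frac{z^2}{2}-\beta_* z+c_*+o(1),\qquad c_*:=\int_0^\infty r(s)\,ds ,
\end{equation*}
with $o(1)=O(e^{-cz^2})$. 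Substituting back gives $f''(z)=\big(b_0e^{-c_*}+o(1)\big)\,e^{-z^2/2+\beta_* z}$, and then, completing the square and using the Mills-ratio asymptotic $\int_u^\infty e^{-t^2/2}\,dt\sim u^{-1}e^{-u^2/2}$,
\begin{equation*}
	1-f'(z)=\int_z^\infty f''(s)\,ds\sim\frac{b_0 e^{-c_*}}{z}\,e^{-z^2/2+\beta_* z}.
\end{equation*}
This is the asserted asymptotic, with $N_1=b_0e^{-c_*}>0$ and the linear-in-$z$ term in the exponent set by the displacement thickness $\beta_*=\int_0^\infty\big(1-f'(s)\big)\,ds>0$; dividing the last two displays yields $f''(z)\sim z\,\big(1-f'(z)\big)$.

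The step I expect to be delicate is the sharpness in stage two. The crude Gaussian bound on $1-f'$ is immediate, but recovering the exact prefactor $z^{-1}$ and the precise exponent requires controlling $\int_0^z f$ up to an $o(1)$ error, which is only possible once one already knows $1-f'\in L^1$ — this is the reason for the two-stage structure. One must also verify that the $o(1)$ appearing inside the exponential is in fact super-exponentially small, so that it yields a harmless factor $1+o(1)$ rather than corrupting the polynomial prefactor, and carry out the completing-the-square / Mills-ratio estimate cleanly enough to preserve the sharp constant.
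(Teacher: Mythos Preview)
The paper does not prove this lemma; it is stated as a classical fact about the Blasius profile and used without argument. Your proof is therefore not being compared to anything in the paper, and on its own merits the argument is clean and correct: the integrating-factor identity $f''(z)=b_0\exp\big(-\int_0^z f\big)$ is the right starting point, the sign analysis for $b_0$ is tight, and the two-stage bootstrap (crude Gaussian bound $\Rightarrow$ $1-f'\in L^1$ $\Rightarrow$ refined expansion of $\int_0^z f$) is exactly how one extracts the sharp asymptotic.

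One point deserves comment. Your computation gives
\[
f''(z)\sim b_0 e^{-c_*}\,e^{-z^2/2+\beta_* z},\qquad 1-f'(z)\sim \frac{b_0 e^{-c_*}}{z}\,e^{-z^2/2+\beta_* z},
\]
with $\beta_*=\int_0^\infty(1-f')>0$, so the linear term in the exponent is \emph{positive}. The lemma as stated in the paper writes $\exp(-z^2/2-N_2 z)$ with $N_2>0$, i.e.\ a negative linear term. Your derivation is the correct one (this is the standard Blasius asymptotic, with $\beta_*\approx 1.72$ the displacement-thickness constant), so the sign on $N_2$ in the paper's statement appears to be a typo. You write ``this is the asserted asymptotic'' a bit too quickly; it would be worth flagging the sign discrepancy explicitly rather than silently identifying $N_2$ with $-\beta_*$. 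For the purposes of the rest of the paper the sign of this linear term is immaterial --- only the leading Gaussian factor $e^{-z^2/2}$ is ever used --- but the statement itself should be corrected.
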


The transformation $\psi = \sqrt{2}f(z)$ converts the solution to $\bar{\omega}(\psi)=\left(f'(z)\right)^2$, satisfying
\begin{equation}\label{bar}
	\sqrt{\bar{\omega}}\bar{\omega}_{\psi\psi}+\frac{1}{2}\psi\bar{\omega}_\psi=0,
\end{equation}
with boundary conditions:
\begin{equation*}
	\bar{\omega}(0)=0, \quad \bar{\omega}(\infty)=1.
\end{equation*}
The transformed function $\bar{\omega}(\psi)$ inherits the following properties:

\begin{lemma}\label{barw}
	The function $\bar{\omega}(\psi)$ satisfies
	\begin{equation*}
		\begin{gathered}
			0<\bar{\omega}(\psi)<1 \quad \text{for}\quad 0<\psi<\infty,\\
			\bar{\omega}_\psi(\psi)>0 \quad \text{for}\quad 0<\psi<\infty,\quad \bar{\omega}_\psi(0)=\sqrt{2}b_0>0,\\
			\sqrt{\bar{\omega}}\bar{\omega}_{\psi\psi} \leq 0 \quad \text{for}\quad 0\leq \psi < \infty,\\
			1-\bar{\omega}(\psi) \sim N_1\psi^{-1}exp\left(-\frac{\psi^2}{2}-N_2\psi\right) \quad \text{as} \quad \psi \rightarrow \infty,
		\end{gathered}
	\end{equation*}
	where $N_1$, $N_2$ are positive constants.
\end{lemma}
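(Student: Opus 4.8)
The plan is to derive the stated properties of $\bar{\omega}(\psi)$ by translating each property of the Blasius function $f(z)$ from Lemma~\ref{f} through the change of variables $\psi=\sqrt{2}f(z)$, $\bar{\omega}=(f'(z))^2$. First I would record the differential relations this transformation induces. Since $\frac{d\psi}{dz}=\sqrt{2}f'(z)$, we have $\bar{\omega}_\psi=\frac{d}{d\psi}(f')^2=\frac{2f'f''}{\sqrt{2}f'}=\sqrt{2}f''(z)$, and differentiating once more, $\bar{\omega}_{\psi\psi}=\frac{d}{d\psi}(\sqrt{2}f'')=\frac{\sqrt{2}f'''}{\sqrt{2}f'}=\frac{f'''}{f'}$. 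Note that $\psi\mapsto z$ is a smooth increasing bijection of $(0,\infty)$ onto $(0,\infty)$ since $f'>0$ there, so pointwise statements in $z$ transfer directly to pointwise statements in $\psi$ with $\psi>0 \iff z>0$.

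Next I would read off each claim. The bound $0<\bar{\omega}<1$ for $0<\psi<\infty$ is immediate from $0<f'(z)<1$ on $0<z<\infty$ and $\bar{\omega}=(f')^2$. The positivity $\bar{\omega}_\psi>0$ for $0<\psi<\infty$ follows from $\bar{\omega}_\psi=\sqrt{2}f''(z)$ together with $f''(z)>0$; at $\psi=0$ (i.e.\ $z=0$), $\bar{\omega}_\psi(0)=\sqrt{2}f''(0)=\sqrt{2}b_0>0$. For the sign of $\sqrt{\bar{\omega}}\,\bar{\omega}_{\psi\psi}$: from $\bar{\omega}_{\psi\psi}=f'''/f'$ and $\sqrt{\bar{\omega}}=f'$ (valid since $f'>0$ on $(0,\infty)$ and $=0$ at $0$) we get $\sqrt{\bar{\omega}}\,\bar{\omega}_{\psi\psi}=f'''(z)\le 0$ using $f'''(z)<0$ for $z>0$ and $f'''(0)=0$; this also immediately verifies the ODE \eqref{bar}, since $\sqrt{\bar{\omega}}\bar{\omega}_{\psi\psi}+\frac12\psi\bar{\omega}_\psi = f''' + \sqrt{2}f(z)\cdot\frac{1}{\sqrt{2}}\cdot\sqrt{2}f'' $—wait, more carefully $\frac12\psi\bar{\omega}_\psi=\frac12\sqrt{2}f(z)\cdot\sqrt{2}f''(z)=f(z)f''(z)$, hence the sum is $f'''+ff''=0$ by the Blasius equation. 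Finally, the far-field asymptotics: as $\psi\to\infty$ we have $z\to\infty$, and since $\psi=\sqrt{2}f(z)$ with $f(z)=z-c_\ast+o(1)$ for some constant $c_\ast>0$ (the Blasius displacement, which follows from $f'\to1$ and $1-f'$ integrable), we get $z=\frac{\psi}{\sqrt2}+c_\ast+o(1)$, so $z\sim\psi/\sqrt2$ up to a bounded shift. Then $1-\bar{\omega}=1-(f')^2=(1-f')(1+f')\sim 2(1-f'(z))$, and plugging the asymptotic $1-f'(z)\sim N_1 z^{-1}\exp(-z^2/2-N_2 z)$ with $z=\psi/\sqrt2+O(1)$ and absorbing constants into (redefined) $N_1,N_2$ yields $1-\bar{\omega}(\psi)\sim N_1\psi^{-1}\exp(-\psi^2/2-N_2\psi)$.

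The only genuinely delicate point is the last one: matching the exponential rate requires knowing that $z$ and $\psi/\sqrt2$ differ by a bounded quantity, not merely that $z\sim\psi/\sqrt2$, because the difference sits inside the exponent of a Gaussian; an unbounded error there would change the constants $N_1,N_2$ in an uncontrolled (indeed rate-altering) way. So the main obstacle is establishing $f(z)=z+O(1)$ as $z\to\infty$ with an honest bound. This is standard: writing $f(z)=f'(z_0)z_0+\int_{z_0}^z f'(t)\,dt$ and using $0<1-f'(t)\lesssim N_1 t^{-1}e^{-t^2/2-N_2 t}$, which is integrable on $[z_0,\infty)$, gives $f(z)=z-\big(c_\ast+o(1)\big)$ with $c_\ast=\int_0^\infty(1-f'(t))\,dt-\text{(boundary term)}$ finite; one should also note $z^{-1}\sim \sqrt2\,\psi^{-1}$ and the correction $e^{-N_2 c_\ast\sqrt2}$ etc.\ are all bounded positive constants absorbed into the $\sim$. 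Everything else is a direct substitution, so after stating the three differential identities the proof is a short verification property by property.
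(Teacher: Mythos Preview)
Your approach is exactly what the paper intends: it offers no separate proof of Lemma~\ref{barw}, stating only that $\bar{\omega}$ ``inherits'' these properties from Lemma~\ref{f} via $\psi=\sqrt{2}f(z)$, $\bar{\omega}=(f')^2$. Your identities $\bar{\omega}_\psi=\sqrt{2}f''$ and $\sqrt{\bar{\omega}}\,\bar{\omega}_{\psi\psi}=f'''$ dispatch the first three claims cleanly, and your remark that $f(z)=z-c_\ast+o(1)$ is needed (not merely $z\sim\psi/\sqrt{2}$) because the error sits inside a Gaussian exponent is exactly the right observation.

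One arithmetic point in the last step, however: with $z=\psi/\sqrt{2}+O(1)$ the leading term of the exponent is $z^2/2=\psi^2/4+O(\psi)$, not $\psi^2/2$. The $O(\psi)$ and $O(1)$ corrections can indeed be absorbed into redefined $N_1,N_2$, but the quadratic coefficient is dictated by the change of variables and cannot. Your argument therefore actually delivers
\[
1-\bar{\omega}(\psi)\sim \tilde N_1\,\psi^{-1}\exp\!\left(-\tfrac{\psi^2}{4}-\tilde N_2\psi\right),
\]
and the $\psi^2/2$ in the stated lemma appears to be a typo in the paper (nothing downstream uses the exact coefficient, only Gaussian-type decay). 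You should flag the discrepancy rather than claim to have recovered $\psi^2/2$ by ``absorbing constants.''
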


A comparison principle emerges from the structural relationship between equations (\ref{omega}) and (\ref{bar}):
\begin{lemma}\label{lemma1}
	There exist positive constants $k_1<1$ and $k_2>1$, dependent on $\omega_0$, such that 
	\begin{equation*}
		k_1\bar{\omega}(\psi)\leq \omega(\xi,\psi) \leq k_2\bar{\omega}(\psi).
	\end{equation*}
\end{lemma}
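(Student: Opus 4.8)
The plan is to prove Lemma \ref{lemma1} by a comparison-principle (maximum principle) argument applied to the difference of $\omega$ and suitable multiples of $\bar\omega$. First I would observe that both $\omega$ and $\bar\omega$ solve the degenerate parabolic equation, but it is cleaner to exploit the fact that $\bar\omega$ is a \emph{stationary} solution of \eqref{omega}: since $\bar\omega=\bar\omega(\psi)$ only, $\bar\omega_\xi=0$ and \eqref{bar} says exactly $\sqrt{\bar\omega}\,\bar\omega_{\psi\psi}+\tfrac12\psi\bar\omega_\psi=0$, so $\bar\omega$ is a solution of \eqref{omega} in $\Omega$ with the same boundary values at $\psi=0$ and $\psi=\infty$. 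Hence $k_2\bar\omega$ is a supersolution candidate and $k_1\bar\omega$ a subsolution candidate, and the whole issue is (i) the initial condition at $\xi=\ln d$ and (ii) making the comparison rigorous despite the degeneracy at $\psi=0$.

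For the initial data, I would use Lemma \ref{ome} and Lemma \ref{barw}: near $\psi=0$ we have $\omega_\psi\ge m>0$ and $\omega(\xi,0)=0$, so $\omega(\ln d,\psi)\gtrsim \psi$ for $0\le\psi\le\psi_0$, while $\bar\omega(\psi)\sim\bar\omega_\psi(0)\psi=\sqrt2 b_0\psi$ near $0$ and $\bar\omega$ is bounded below by a positive constant on $[\psi_0,\infty)$; since $\omega_0$ is continuous, positive for $\psi>0$, and tends to $1$ at infinity (matching $\bar\omega(\infty)=1$), the ratio $\omega_0(\psi)/\bar\omega(\psi)$ extends to a continuous, strictly positive, bounded function on $[0,\infty]$. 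Therefore there exist $k_1<1<k_2$, depending on $\omega_0$, with $k_1\bar\omega(\psi)\le\omega_0(\psi)\le k_2\bar\omega(\psi)$ for all $\psi\ge0$. This fixes the constants in the statement; by construction the inequality also holds on the lateral boundary $\psi=0$ (both sides vanish) and as $\psi\to\infty$ ($\omega\to1$, $k_1<1<k_2$).

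Next I would run the comparison. Set $g=\omega-k_2\bar\omega$. Using that $\omega$ solves \eqref{omega} and $k_2\bar\omega$ solves $\partial_\xi(k_2\bar\omega)-\sqrt{k_2\bar\omega}\,(k_2\bar\omega)_{\psi\psi}-\tfrac12\psi(k_2\bar\omega)_\psi$ — which is not literally \eqref{omega} because of the $\sqrt{\cdot}$ nonlinearity — it is cleaner to subtract the two equations written as $\omega_\xi-\tfrac12\psi\omega_\psi=\sqrt\omega\,\omega_{\psi\psi}$ and to linearize: $g$ satisfies a linear equation of the form $g_\xi-\tfrac12\psi g_\psi-\sqrt{\omega}\,g_{\psi\psi}-c(\xi,\psi)\,g=0$ with $c=\frac{k_2\bar\omega_{\psi\psi}}{\sqrt\omega+\sqrt{k_2\bar\omega}}$ (the coefficient arising from $\sqrt\omega-\sqrt{k_2\bar\omega}=\frac{\omega-k_2\bar\omega}{\sqrt\omega+\sqrt{k_2\bar\omega}}$ multiplied by $(k_2\bar\omega)_{\psi\psi}$), and crucially $c\le0$ by Lemma \ref{barw} ($\sqrt{\bar\omega}\,\bar\omega_{\psi\psi}\le0$, hence $\bar\omega_{\psi\psi}\le0$). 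A maximum principle for the (degenerate) parabolic operator $\partial_\xi-\tfrac12\psi\partial_\psi-\sqrt\omega\,\partial_{\psi\psi}-c$ with $c\le0$, applied on $\Omega$ with $g\le0$ on the parabolic boundary ($\xi=\ln d$: by the initial step; $\psi=0$ and $\psi\to\infty$: as above), then gives $g\le0$ everywhere, i.e. $\omega\le k_2\bar\omega$. The lower bound $\omega\ge k_1\bar\omega$ is identical with $k_2\rightsquigarrow k_1$ and the opposite sign. I would justify the maximum principle exactly as in Oleinik–Samokhin (Theorem 2.1.1 of \cite{1999Mathematical}) — the degeneracy of $\sqrt\omega$ at $\psi=0$ is handled as there, using $\omega_\psi\ge m>0$ near $\psi=0$ and the barrier behavior $\bar\omega\sim\mathrm{const}\cdot\psi$, so that near the degenerate boundary the sign of $g$ is controlled directly; and the unbounded domain in $\psi$ is handled by the decay $1-\bar\omega,\,1-\omega\to0$ together with an auxiliary exponential barrier if needed.

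The main obstacle I anticipate is not the algebra but the rigorous invocation of the comparison principle on this degenerate, unbounded domain: one must ensure no loss of the inequality can occur at the corner $(\ln d,0)$ or in the far field $\psi\to\infty$. The far field is controlled because $k_1<1<k_2$ strictly and $\omega,\bar\omega\to1$, leaving a definite sign with room to spare; the corner is controlled by the linear lower bounds $\omega,\bar\omega\gtrsim\psi$ near $\psi=0$ coming from Lemma \ref{ome} and Lemma \ref{barw}, which is exactly the mechanism used to prove the analogous estimates in \cite{1999Mathematical}. Once these two boundary regimes are pinned down, the interior application of the maximum principle (with the favorable sign $c\le0$) is routine, and the lemma follows.
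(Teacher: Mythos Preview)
Your approach is essentially the paper's: choose $k_1<1<k_2$ so that the inequality holds at $\xi=\ln d$, then apply the maximum principle to the difference, using that the zero-order coefficient $-c=-\dfrac{k_j\bar\omega_{\psi\psi}}{\sqrt\omega+\sqrt{k_j\bar\omega}}\ge0$. One correction: the linearized equation for $g=\omega-k_2\bar\omega$ is \emph{not} homogeneous. Because $k_2\bar\omega$ does not solve \eqref{omega} (as you yourself note), subtracting leaves a source term,
\[
g_\xi-\sqrt\omega\,g_{\psi\psi}-\tfrac12\psi\,g_\psi-c\,g \;=\;-k_2\bigl(1-\sqrt{k_2}\bigr)\sqrt{\bar\omega}\,\bar\omega_{\psi\psi},
\]
which is $\le0$ for $k_2>1$ (and $\ge0$ with $k_1<1$ in place of $k_2$). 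This sign is precisely what makes the comparison work---the paper in fact opens its proof by computing exactly this quantity---and the maximum principle then applies to the resulting differential \emph{inequality}, not an equation. With that fix your plan goes through and matches the paper; your more detailed discussion of the initial ordering and of the degenerate boundary at $\psi=0$ simply spells out what the paper takes for granted.
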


\begin{proof}
    Applying the maximum principle requires initial selection of $k_1 < 1$ and $k_2 > 1$ satisfying
	\begin{equation*}
		k_1\bar{\omega}(\psi)\leq \omega_0(\psi) \leq k_2\bar{\omega}(\psi).
	\end{equation*}

	For the lower bound, define $\varphi = \omega - k_1\bar{\omega}$. Computation reveals:
	\begin{equation*}
		\begin{aligned}
			&\partial_\xi (k_1\bar{\omega})-\sqrt{k_1\bar{\omega}}(k_1\bar{\omega})_{\psi\psi}-\frac{1}{2}\psi(k_1\bar{\omega})_\psi\\
			=&k_1(1-\sqrt{k_1})\sqrt{\bar{\omega}}\bar{\omega}_{\psi\psi} \leq 0.
		\end{aligned}
	\end{equation*}
	This difference function $\varphi$ satisfies the differential inequality:
	\begin{equation*}
		\varphi_\xi-\sqrt{\omega}\varphi_{\psi\psi}-\frac{1}{2}\psi\varphi_\psi-\frac{k_1\bar{\omega}_{\psi\psi}}{\sqrt{\omega}+\sqrt{k_1\bar{\omega}}}\varphi \geq 0,
	\end{equation*}
	with non-negative coefficient $-\frac{k_1\bar{\omega}_{\psi\psi}}{\sqrt{\omega}+\sqrt{k_1\bar{\omega}}}$. 

	Given the boundary conditions $\varphi(\xi,0) = 0$, $\varphi(\xi,\infty) = 1 - k_1 > 0$ and initial data $\varphi(\ln d, \psi) \geq 0$, the maximum principle guarantees $\varphi \geq 0$ in $\Omega$, i.e. $\omega - k_1\bar{\omega} \geq 0$ in $\Omega$. The upper bound follows similarly.
\end{proof}

Next, we investigate the fundamental properties of the operator
\begin{equation*}
    \mathcal{A}(\xi,\psi) = -\frac{\bar{\omega}_{\psi\psi}}{\sqrt{\omega} + \sqrt{\bar{\omega}}},
\end{equation*}
which plays a crucial role in our subsequent analysis.

\begin{lemma}\label{lemma2}
	For all $(\xi,\psi) \in \Omega$, the following hold:
	\begin{equation*}
		|\mathcal{A}(\xi,\psi)|\leq M,
	\end{equation*}
	where $M$ is a positive constant. Moreover, for any $\psi_0>0$, there exists a positive constant $\lambda$ such that 
	\begin{equation*}
		\mathcal{A} (\xi,\psi) \geq \lambda \quad\text{for}\quad 0 \leq \psi \leq \psi_0.
	\end{equation*}
\end{lemma}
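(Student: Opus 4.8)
The plan is to exploit the structure $\mathcal{A}(\xi,\psi) = -\bar{\omega}_{\psi\psi}/(\sqrt{\omega}+\sqrt{\bar{\omega}})$ together with the two-sided bound $k_1\bar{\omega}\leq \omega \leq k_2\bar{\omega}$ from Lemma \ref{lemma1} and the sign/asymptotic information on $\bar{\omega}$ from Lemma \ref{barw}. For the global bound, I would first rewrite $\bar{\omega}_{\psi\psi} = \sqrt{\bar\omega}\,\bar\omega_{\psi\psi}/\sqrt{\bar\omega}$ when $\psi$ is bounded away from $0$, so that the numerator is $-\sqrt{\bar\omega}\bar\omega_{\psi\psi}/\sqrt{\bar\omega}$, which is controlled because $|\sqrt{\bar\omega}\bar\omega_{\psi\psi}|$ is bounded (it equals $-\tfrac12\psi\bar\omega_\psi$ by \eqref{bar}, and $\bar\omega_\psi$ is bounded by Lemma \ref{barw}/Lemma \ref{f}) while $\sqrt{\bar\omega}$ is bounded below on $\psi\geq \delta$. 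Near $\psi=0$ one cannot divide by $\sqrt{\bar\omega}$, so instead I would work directly: by \eqref{bar}, $\bar\omega_{\psi\psi} = -\tfrac12\psi\bar\omega_\psi/\sqrt{\bar\omega}$, and since $\bar\omega(\psi)\sim (\sqrt2 b_0)\psi$ as $\psi\to 0^+$ (from $\bar\omega_\psi(0)=\sqrt2 b_0>0$) we get $\bar\omega_{\psi\psi}$ bounded near $0$; hence $|\mathcal{A}|\leq |\bar\omega_{\psi\psi}|/(2\sqrt{\bar\omega})$ stays bounded once we note $\sqrt{\omega}+\sqrt{\bar\omega}\geq \sqrt{\bar\omega}\gtrsim \sqrt{\psi}$ near $0$ — the two $\sqrt{\psi}$-type vanishings cancel. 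Combining the bounded-$\psi$-away-from-$0$ regime, the near-$0$ regime, and the large-$\psi$ regime (where $\bar\omega_{\psi\psi}\to 0$ exponentially by Lemma \ref{barw} while $\sqrt\omega+\sqrt{\bar\omega}\to 2$) yields the uniform bound $|\mathcal{A}|\leq M$.

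For the lower bound on $0\leq \psi\leq \psi_0$: here I use that $\sqrt{\bar\omega}\bar\omega_{\psi\psi}\leq 0$ with a \emph{quantitative} strict sign. From \eqref{bar}, $-\sqrt{\bar\omega}\bar\omega_{\psi\psi} = \tfrac12\psi\bar\omega_\psi > 0$ for $\psi>0$, so $-\bar\omega_{\psi\psi} = \tfrac{\psi\bar\omega_\psi}{2\sqrt{\bar\omega}}$. Therefore
\begin{equation*}
\mathcal{A}(\xi,\psi) = \frac{\psi\,\bar\omega_\psi}{2\sqrt{\bar\omega}\,(\sqrt\omega+\sqrt{\bar\omega})} \geq \frac{\psi\,\bar\omega_\psi}{2\sqrt{\bar\omega}\,(\sqrt{k_2}+1)\sqrt{\bar\omega}} = \frac{\psi\,\bar\omega_\psi}{2(\sqrt{k_2}+1)\,\bar\omega},
\end{equation*}
using $\sqrt\omega\leq \sqrt{k_2}\sqrt{\bar\omega}$ from Lemma \ref{lemma1}. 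Now $\psi\bar\omega_\psi/\bar\omega$ is a continuous positive function on $(0,\psi_0]$ (positivity from Lemma \ref{barw}), and as $\psi\to 0^+$ it tends to $\lim_{\psi\to 0}\psi\bar\omega_\psi/\bar\omega = 1$ since $\bar\omega\sim(\sqrt2 b_0)\psi$ and $\bar\omega_\psi\to \sqrt2 b_0$. Hence $\psi\bar\omega_\psi/\bar\omega$ attains a positive minimum $c_0=c_0(\psi_0)>0$ on $[0,\psi_0]$, and we may take $\lambda = c_0/(2(\sqrt{k_2}+1))$.

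The main technical obstacle is the degeneracy at $\psi=0$, where $\bar\omega$, $\sqrt{\bar\omega}$, and $\bar\omega_{\psi\psi}$ all behave singularly or vanish; the key observation that resolves it is the algebraic identity $-\bar\omega_{\psi\psi}=\tfrac{\psi\bar\omega_\psi}{2\sqrt{\bar\omega}}$ from \eqref{bar}, which exhibits the cancellation explicitly and converts both claims into statements about the bounded, strictly positive quantity $\psi\bar\omega_\psi/\bar\omega$ — for which the only input needed is the boundary behavior $\bar\omega_\psi(0)=\sqrt2 b_0>0$ recorded in Lemma \ref{barw}. One should also verify that the constants $k_1,k_2$ from Lemma \ref{lemma1} (which depend on $\omega_0$) do not spoil uniformity in $\xi$ — they do not, since they are $\xi$-independent — so $M$ and $\lambda$ depend only on $\omega_0$ and (for $\lambda$) on $\psi_0$.
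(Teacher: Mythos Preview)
Your proof is correct and takes essentially the same approach as the paper: both use Lemma~\ref{lemma1} to sandwich $\sqrt{\omega}$ between multiples of $\sqrt{\bar\omega}$, then reduce the problem to bounding a single function of $\psi$ built from the Blasius profile and its derivatives. The only cosmetic difference is that the paper changes variables back to $z$ and writes everything in terms of $f$, obtaining $\mathcal{A}\sim \frac{-f'''}{(f')^2}=\frac{ff''}{(f')^2}$ and evaluating the limit $\frac{1}{2}$ at $z=0$, whereas you stay in the $\psi$-variable and use the identity $-\bar\omega_{\psi\psi}=\tfrac{\psi\bar\omega_\psi}{2\sqrt{\bar\omega}}$ from \eqref{bar} to reach the equivalent quantity $\tfrac{\psi\bar\omega_\psi}{2\bar\omega}$ (with limit $\tfrac{1}{2}$ at $\psi=0$); these are literally the same expression under $\psi=\sqrt{2}f(z)$, $\bar\omega=(f')^2$.
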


\begin{proof}
	Lemma \ref{lemma1} provides the comparison bounds:
	\begin{equation*}
		k_1\bar{\omega}(\psi)\leq \omega(\xi,\psi) \leq k_2\bar{\omega}(\psi).
	\end{equation*}

	Noting that $f'''=\sqrt{\bar{\omega}}\bar{\omega}_{\psi\psi}\leq 0$ and $\bar{\omega}(\psi)=\left(f'(z)\right)^2$, one has the following inequalities:
    \begin{equation*}
	    -\frac{1}{1+\sqrt{k_2}}\frac{f'''}{(f')^2} \leq \mathcal{A} \leq -\frac{1}{1+\sqrt{k_1}}\frac{f'''}{(f')^2}.
    \end{equation*}

	Lemma \ref{f} yields the estimates:
	\begin{equation*}
        \frac{-f'''}{(f')^2}\leq M.
	\end{equation*}
	Near the origin $z \rightarrow 0^+$, we calculate
	\begin{equation*}
		\frac{-f'''}{(f')^2}=\frac{ff''}{(f')^2}=\frac{1}{2}.
	\end{equation*}
	Through careful selection of constants $M$, $\psi_0$, and $\lambda$, we establish the required inequalities.
\end{proof}

We now prove concavity preservation for the Prandtl system:
\begin{lemma}
	For solutions $u(x,y)$ of \eqref{prandtl}-\eqref{boundary} with concave initial data $\partial_y^2u_0 \leq 0$,
    \begin{equation*}
        \partial_y^2 u \leq 0 \quad \text{in} \ D.
    \end{equation*}
\end{lemma}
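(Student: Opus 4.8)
The plan is to show that $q := \partial_y^2 u$ satisfies a parabolic equation whose zeroth-order coefficient has the right sign, so that the maximum principle applies. Differentiating the momentum equation $uu_x + vu_y - u_{yy} = 0$ twice in $y$ and using incompressibility $v_y = -u_x$, one finds after cancellation that $q$ solves a linear parabolic equation of the form
\begin{equation*}
	u q_x + v q_y - q_{yy} + 3 u_x q = 0 \quad \text{in } D,
\end{equation*}
(more precisely: the first $y$-derivative of the equation gives $u u_{xy} + v u_{yy} - u_{yyy} = 0$, since the $u_x u_y$ and $v_y u_y$ terms cancel by $u_x + v_y = 0$; differentiating once more produces $u u_{xyy} + v u_{yyy} - u_{yyyy} + u_y u_{xy} + v_y u_{yy} = 0$, and $u_y u_{xy} + v_y u_{yy}$ needs to be rewritten — actually $u_y u_{xy} = \partial_x(u_y u_y)/2$ is not quite the term, so one keeps track carefully and lands on a coefficient that is a controlled multiple of $u_x$ times $q$, plus possibly terms that vanish). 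I would carry out this differentiation cleanly to identify the exact zeroth-order coefficient; the key qualitative fact needed is only that it is \emph{bounded} (not that it has a sign), because the maximum principle for $q$ with the sign condition $q \le 0$ can absorb a bounded zeroth-order term after the standard trick of working with $e^{-\Lambda x} q$ for large $\Lambda$.

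The main work is then the boundary analysis, since $D$ is unbounded in both $x$ and $y$ and the corner $(0,0)$ is delicate. On $\{x=0\}$ we have $q = \partial_y^2 u_0 \le 0$ by hypothesis. As $y \to \infty$, using the regularity from Theorem \ref{Oleinik} (boundedness and continuity of $u_{yy}$, and $u \to U \equiv 1$) together with the known far-field behavior, $q \to 0$; one shows $\limsup_{y\to\infty} q \le 0$ uniformly on compact $x$-intervals, which suffices. The genuinely subtle boundary is $\{y=0\}$: there $u = 0$, so the equation degenerates, and $q(x,0) = u_{yy}(x,0)$ need not be nonnegative a priori — indeed we want to show it is $\le 0$. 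Here I would use the equation restricted to $y=0$: since $u(x,0)=0$ and $v(x,0)=0$, the momentum equation gives $u_{yy}(x,0) + p_x = 0$, i.e. $u_{yy}(x,0) = -p_x = U U' = 0$ in the pressure-free normalized case $U \equiv 1$. So in fact $q(x,0) = 0$ exactly, which is the cleanest possible boundary condition and removes the difficulty entirely.

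With $q = 0$ on $\{y=0\}$, $q \le 0$ on $\{x=0\}$, and $\limsup q \le 0$ at $y = \infty$, I would apply the maximum principle to $\tilde q := e^{-\Lambda x} q$ on the truncated domain $[0, X_0] \times [0, Y]$, let $Y \to \infty$ and $X_0 \to \infty$, and conclude $q \le 0$ throughout $D$. The compatibility condition $u_0'' = O(y^2)$ near the corner from Theorem \ref{Oleinik} ensures $q$ is continuous up to $(0,0)$ so that no spurious corner contribution appears. The main obstacle I anticipate is purely the careful bookkeeping in the double $y$-differentiation to confirm the zeroth-order coefficient is bounded (using $|u_x| \lesssim$ bounded, which follows from Theorem \ref{Oleinik}(4) on compact sets, though uniform-in-$x$ bounds may require the decay estimates being developed in the paper — in any case $e^{-\Lambda x}$ with $\Lambda$ chosen per compact strip handles it); everything else is a routine application of the parabolic maximum principle on an exhausting sequence of bounded domains.
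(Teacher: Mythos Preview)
Your plan works in the original $(x,y)$ coordinates, whereas the paper works in the modified von Mises variables and studies $p=\sqrt{\omega}\,\omega_{\psi\psi}$ (so that $\partial_y^2 u\le 0\Leftrightarrow p\le 0$). Your observation that $q(x,0)=u_{yy}(x,0)=0$ in the pressure-free case is correct and is indeed the right boundary datum. However, there is a real gap in the ``bookkeeping'' step, and it is precisely the step where the paper spends all its effort.

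When you close the equation for $q=u_{yy}$, the residual term $u_y u_{xy}$ does \emph{not} reduce to a bounded multiple of $q$; using the once-differentiated equation $u u_{xy}=q_y-vq$ to eliminate $u_{xy}$ gives
\[
u\,q_x+\Big(v+\tfrac{u_y}{u}\Big)q_y-q_{yy}-\Big(u_x+\tfrac{v u_y}{u}\Big)q=0.
\]
The zeroth-order coefficient is bounded (since $v=O(y)$ and $u\sim my$ give $vu_y/u=O(1)$), so far so good. But the $e^{-\Lambda x}$ trick adds $\Lambda u$ to the zeroth-order term, and $u$ vanishes at $y=0$: for the resulting coefficient $\Lambda u-(u_x+vu_y/u)$ to be nonnegative near $y=0$ you need $u_x+vu_y/u=O(u)=O(y)$ there. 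Theorem~\ref{Oleinik} only gives $u_x$ continuous with $u_x(x,0)=0$, not $u_x=O(y)$; without this the weight $e^{-\Lambda x}$ cannot absorb the zeroth-order term uniformly, and the maximum principle argument does not close.

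This is exactly the obstacle the paper confronts in von Mises coordinates: the equation for $p$ has zeroth-order coefficient containing $-\omega_\xi/(2\omega)$, and the a priori bound from Lemma~\ref{ome} is only $|\omega_\xi|\lesssim\psi^{1-\beta}$, which is too weak. The paper then runs a separate barrier argument (the function $\varphi(\psi)=M\psi(1-\psi^\beta)$ for $r=\omega_\xi$) to upgrade this to $|\omega_\xi|\lesssim\psi$, hence $|\omega_\xi/\omega|\lesssim 1$, after which the $e^{M\xi}$ weight works. Your proposal would need the analogous step---essentially proving $u_x=O(u)$ near the wall---and you have not supplied it; the remark that ``everything else is routine'' overlooks the one non-routine ingredient.
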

\begin{proof}
	Under the modified von Mises transformation \eqref{nvM}, the concavity condition transforms to
	\begin{equation*}
		p(\xi,\psi) := \sqrt{\omega}\omega_{\psi\psi} \leq 0.
	\end{equation*}

	Double differentiation of \eqref{omega} with respect to $\psi$ produces
	\begin{equation*}
		p_\xi-\sqrt{\omega}p_{\psi\psi}-\frac{1}{2}\psi p_{\psi}+\left(-\frac{\omega_\xi}{2\omega}+\frac{\psi\omega_\psi}{4\omega}-1\right)p=0,
	\end{equation*}
	with boundary conditions
	\begin{equation*}
		p(\xi,0)=p(\xi,\infty)=0,\quad p(\ln d,\psi)=\sqrt{\omega_0}\omega_{0\psi\psi}\leq 0.
	\end{equation*}
	
	Defining $P = e^{M\xi}p$, one has
	\begin{equation*}
		P_\xi-\sqrt{\omega}P_{\psi\psi}-\frac{1}{2}\psi P_{\psi}+\left(M -\frac{\omega_\xi}{2\omega}+\frac{\psi\omega_\psi}{4\omega}-1\right)P=0.
	\end{equation*}
	The maximum principle ensures $P \leq 0$ when $M$ is sufficiently large. The argument hinges on controlling $-\frac{\omega_\xi}{2\omega}$ and $\frac{\psi\omega_\psi}{4\omega}$. Noting that 
	\begin{equation*}
		\begin{gathered}
			\left|\frac{\psi\omega_\psi}{4\omega}\right|\lesssim 1,\\
			\left| -\frac{\omega_\xi}{2\omega}\right|\lesssim 1 \quad \textit{for}\quad \psi\geq \psi_0,
		\end{gathered}
	\end{equation*}
	where $\psi_0$ is a small positive constant. Thus the essential remaining estimate reduces to:
	\begin{equation*}
		\left| -\frac{\omega_\xi}{2\omega}\right|\lesssim 1 \quad \textit{for}\quad 0\leq \psi \leq \psi_0.
	\end{equation*}

	By Lemma \ref{ome}, one has $\left|\omega_\xi\right| \lesssim \psi^{1-\beta}$ for $0<\beta<\frac{1}{2}$. To refine the growth estimate, we define $r := \omega_\xi$ satisfying:
	\begin{equation*}
		L(r)=r_\xi-\sqrt{\omega}r_{\psi\psi}-\frac{1}{2}\psi r_\psi+\frac{\psi \omega_\psi}{4\omega}r=\frac{r^2}{2\omega},
	\end{equation*}
	with
	\begin{equation*}
		\left|\frac{r^2}{2\omega}\right|\lesssim \psi^{1-2\beta}.
	\end{equation*}
    
	Consider the barrier function $\varphi(\psi)=M\psi \left(1-\psi^\beta \right)$, where $M$ is a positive constant to be determined. One first verifies that
	\begin{equation*}
		\varphi\pm r(\xi,0)=0, \quad \varphi\pm r(\xi,\psi_0)\geq 0, \quad \varphi\pm r(0,\psi)\geq 0
	\end{equation*}
	provided that $M$ is sufficiently large. Additionally, computation gives
	\begin{equation*}
		\begin{aligned}
			L(\varphi)&=M\left(\sqrt{\omega}\beta(1+\beta)\psi^{\beta-1}-\frac{1}{2}\left(\psi-(1+\beta)\psi^{1+\beta}\right)+\frac{\psi \omega_\psi}{4\omega}\psi \left(1-\psi^\beta \right)\right)\\
			&\geq \delta M \psi^{\beta-\frac{1}{2}}
		\end{aligned}
	\end{equation*}
	where $\delta$ is a small positive constant. Then one has
	\begin{equation*}
		L(\varphi)\pm L(r)\geq \delta M \psi^{\beta-\frac{1}{2}} \pm \left|\frac{r^2}{2\omega}\right| \geq 0
	\end{equation*}
	for $M$ large enough. Application of the maximum principle yields
	\begin{equation*}
		\left|\omega_\xi\right| \lesssim \psi \quad \textit{for}\quad 0\leq \psi \leq \psi_0.
	\end{equation*}
	By Lemma \ref{barw} ang Lemma \ref{lemma1}, one can obtain
	\begin{equation*}
		\left| -\frac{\omega_\xi}{2\omega}\right|\lesssim 1 \quad \textit{for}\quad 0\leq \psi \leq \psi_0.
	\end{equation*}

\end{proof}

\section{Proof of Theorem \ref{energy1}}

Let $\tilde{\omega}=\omega-\bar{\omega}$, where 
\begin{equation}
	\bar{\omega}\left(\int_{0}^{y}\frac{u(x,y')}{\sqrt{x+1}}dy'\right)=\left[f'\left(\frac{y}{\sqrt{2(x+1)}}\right)\right]^2.
\end{equation}
This perturbation satisfies the governing equation:
\begin{equation}\label{error}
	\tilde{\omega}_\xi-\frac{1}{2}\psi\tilde{\omega}_\psi-\sqrt{\bar{\omega}}\tilde{\omega}_{\psi\psi}-\frac{\bar{\omega}_{\psi\psi}}{2\sqrt{\bar{\omega}}}\tilde{\omega}=F,
\end{equation}
where the nonlinear term $F$ is given by:
\begin{equation}\label{nonline}
	F=\frac{\tilde{\omega}\tilde{\omega}_{\psi\psi}}{\sqrt{\bar{\omega}}+\sqrt{\omega}}-\frac{\bar{\omega}_{\psi\psi}}{2\sqrt{\bar{\omega}}}\frac{\tilde{\omega}^2}{\left(\sqrt{\bar{\omega}}+\sqrt{\omega}\right)^2},
\end{equation}
with boundary conditions
\begin{equation*}
	\tilde{\omega}(\xi,0)=\tilde{\omega}(\xi,\infty)=0, \quad \tilde{\omega}_0 (\psi)=\omega_0(\psi)-\bar{\omega}(\psi).
\end{equation*}

We define the weight function $\rho(\psi)$ through
\begin{equation}\label{wei}
	A \tilde{\omega}_\xi-(\rho \tilde{\omega}_\psi)_{\psi}-\frac{\bar{\omega}_{\psi\psi}}{2\sqrt{\bar{\omega}}} A \tilde{\omega}=AF,
\end{equation}
where $A=\frac{\rho}{\sqrt{\bar{\omega}}}$ and $\rho=exp\left(\int_{0}^{\psi}\frac{s}{2\sqrt{\bar{\omega}(s)}}ds\right)$.

\subsection{Stability Analysis}
\ 
\newline
\indent Define the following weighted energy norms:
\begin{align*}
	\|\tilde{\omega}\|_{X_0}^2&=\sup_{\xi \geq d}\left\|\sqrt{A} \tilde{\omega} e^\xi\right\|_{L_\psi^2}^2+\left\|\sqrt{\rho} \tilde{\omega}_\psi e^{\frac{3}{4} \xi}\right\|_{L_{\xi,\psi}^2}^2;\\
	\|\tilde{\omega}\|_{X_1}^2&=\sup _{\xi \geq d} \left\|\sqrt{\rho} \tilde{\omega}_\psi e^{\frac{3}{4}\xi}\right\|_{L_\psi^2}^2+\left\|\sqrt{A} \tilde{\omega}_{\xi} e^{\frac{3}{4}\xi}\right\|_{L_{\xi,\psi}^2}^2;\\
	\|\tilde{\omega}\|_{X_2}^2&=\sup _{\xi \geq d}\left\|\sqrt{A} \tilde{\omega}_\xi e^{\frac{3}{4}\xi}\right\|_{L_\psi^2}^2+\left\|\sqrt{\rho} \tilde{\omega}_{\xi\psi} e^{\frac{3}{4}\xi}\right\|_{L_{\xi,\psi}^2}^2+\sup _{\xi \geq d}\left\|\frac{\sqrt{A} \sqrt{\bar{\omega}}}{1+\psi} \tilde{\omega}_{\psi\psi} e^{\frac{3}{4}\xi}\right\|_{L_\psi^2}^2;\\
	\|\tilde{\omega}\|_{X_3}^2&=\sup _{\xi \geq d}\left\|\sqrt{\rho} \tilde{w}_{\xi\psi} e^{\frac{3}{4} \xi}\right\|_{L_\psi^2}^2+\left\|\sqrt{A} \tilde{\omega}_{\xi\xi} e^{\frac{3}{4} \xi}\right\|_{L_{\xi,\psi}^2}^2+\left\|\frac{\sqrt{A} \sqrt{\omega}}{1+\psi} \tilde{\omega}_{\xi\psi\psi} e^{\frac{3}{4}\xi}\right\|_{L_{\xi, \psi}^2}^2.
\end{align*}
Construct the solution space:
\begin{equation}\label{space}
	\|\tilde{\omega}\|_{\mathbb{X} }=\left\{\tilde{\omega}\left|\sum_{i=0}^{3}\|\tilde{\omega}\|_{X_i}<\infty \right.\right\}.
\end{equation}
Define corresponding norms for initial data:
\begin{equation}\label{inspace}
	\|\tilde{\omega}_0\|_{\mathbb{X}_{0}}^2 =\left\| \sqrt{A} \tilde{\omega}_0\right\|_{L^2_\psi}^2 + \left\| \sqrt{\rho} \tilde{\omega}_{0\psi}\right\|_{L^2_\psi}^2 +\left\| \sqrt{A} \tilde{\omega}_{0\xi}\right\|_{L^2_\psi}^2+ \left\| \sqrt{\rho} \tilde{\omega}_{0\xi\psi}\right\|_{L^2_\psi}^2.
\end{equation}

This analytical framework yields the following stability theorem, which constitutes a low-regularity version of Theorem \ref{energy1} with relaxed decay requirements.
\begin{theorem}\label{sta}
	Under the assumptions in Theorem \ref{Oleinik}, if the initial data $u_0(y)$ satisfies
	\begin{equation}\label{assume1}
		\sum_{j=0}^{3}\left\|e^{\frac{y^2}{8}}\partial_y^j (u_0-\bar{u}_0)\right\|_{L^2_y} \leq \kappa \ll 1.
	\end{equation}
	Then the perturbation $\tilde{\omega}=\omega-\bar{\omega}$ exhibits exponential decay:
	\begin{align*}
		&\left\|\sqrt{A} \tilde{\omega} \right\|_{L_\psi^2} \lesssim e^{-\xi}, \quad \left\|\sqrt{\rho} \tilde{\omega}_\psi \right\|_{L_\psi^2} \lesssim e^{-\frac{3}{4}\xi},\\
		&\left\|\sqrt{A} \tilde{\omega}_\xi\right\|_{L_\psi^2} \lesssim e^{-\frac{3}{4}\xi},\quad \left\|\sqrt{\rho} \tilde{\omega}_{\xi\psi} \right\|_{L_\psi^2} \lesssim e^{-\frac{3}{4}\xi},\\
		&\left\|\frac{\sqrt{A} \sqrt{\bar{\omega}}}{1+\psi} \tilde{\omega}_{\psi\psi}\right\|_{L_\psi^2} \lesssim e^{-\frac{3}{4}\xi}.
	\end{align*}
\end{theorem}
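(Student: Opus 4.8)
\textbf{Proof proposal for Theorem \ref{sta}.}

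The plan is to run a weighted energy hierarchy on the equation \eqref{wei}, proving the five decay bounds in the order listed and closing the nonlinear terms by a continuity (bootstrap) argument on the norm $\|\tilde\omega\|_{\mathbb{X}}$ defined in \eqref{space}. The backbone of every estimate is the coercivity inequality coming from the principal eigenvalue of $\mathcal L$, namely
\begin{equation*}
	\int_0^{\infty} \rho \tilde{\omega}_\psi^2\, d \psi-\int_0^{\infty} \frac{\bar{\omega}_{\psi \psi}}{2 \sqrt{\bar{\omega}}} A \tilde{\omega}^2\, d \psi \geq \int_0^{\infty} A \tilde{\omega}^2\, d \psi,
\end{equation*}
which is established in Appendix \ref{app:eigenvalue}: since $\psi\bar\omega_\psi$ is the first eigenfunction with eigenvalue $1$, the associated Rayleigh quotient for $\mathcal L$ (self-adjoint in $L^2(A\,d\psi)$, as the form of \eqref{wei} makes manifest) is bounded below by $1$. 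Multiplying \eqref{wei} by $\tilde\omega$ and integrating by parts in $\psi$ (the boundary terms vanish because $\tilde\omega(\xi,0)=\tilde\omega(\xi,\infty)=0$ and $\rho\bar\omega^{-1/2}$ has controlled growth against the Gaussian decay of $\tilde\omega$ from \eqref{assume1}), one gets
\begin{equation*}
	\frac12 \frac{d}{d\xi}\left\|\sqrt A\,\tilde\omega\right\|_{L^2_\psi}^2 + \left\|\sqrt A\,\tilde\omega\right\|_{L^2_\psi}^2 \leq \frac12\frac{d}{d\xi}\left\|\sqrt A\,\tilde\omega\right\|_{L^2_\psi}^2 + \int_0^\infty \rho\tilde\omega_\psi^2\,d\psi - \int_0^\infty \frac{\bar\omega_{\psi\psi}}{2\sqrt{\bar\omega}}A\tilde\omega^2\,d\psi = \int_0^\infty AF\tilde\omega\,d\psi.
\end{equation*}
Multiplying by $e^{2\xi}$ and using $\frac{d}{d\xi}(e^{2\xi}\|\sqrt A\tilde\omega\|^2) \le 2\int AF\tilde\omega\,e^{2\xi}$ together with Cauchy--Schwarz gives the $X_0$-estimate: $\|\sqrt A\,\tilde\omega\,e^\xi\|_{L^2_\psi}^2 + \|\sqrt\rho\,\tilde\omega_\psi\,e^{\frac34\xi}\|_{L^2_{\xi,\psi}}^2 \lesssim \|\sqrt A\tilde\omega_0\|_{L^2_\psi}^2 + \|\sqrt A\,Fe^\xi\|_{L^2_{\xi,\psi}}^2$. (The slightly subcritical weight $e^{\frac34\xi}$ on the dissipation term is dictated by how the nonlinearity will be absorbed: one keeps a margin $e^{-\frac12\xi}$ from the gap between $e^{2\xi}$ and $e^{\frac32\xi}$.)

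Next I would differentiate \eqref{wei} in $\xi$ and repeat the energy argument on $\tilde\omega_\xi$, which satisfies the same linear operator (the coefficients $\rho,A,\bar\omega$ are $\xi$-independent) plus a forcing $(AF)_\xi$; the same coercivity bound yields the $X_1$ and $X_2$ estimates, controlling $\|\sqrt\rho\,\tilde\omega_\psi e^{\frac34\xi}\|_{L^2_\psi}$, $\|\sqrt A\,\tilde\omega_\xi e^{\frac34\xi}\|_{L^2_\psi}$, $\|\sqrt\rho\,\tilde\omega_{\xi\psi}e^{\frac34\xi}\|_{L^2_{\xi,\psi}}$, and, after rewriting $\sqrt{\bar\omega}\tilde\omega_{\psi\psi}$ from the equation itself as $\tilde\omega_\xi - \tfrac12\psi\tilde\omega_\psi - \tfrac{\bar\omega_{\psi\psi}}{2\sqrt{\bar\omega}}\tilde\omega - F$, the weighted bound on $\tilde\omega_{\psi\psi}$ (the $\frac{1}{1+\psi}$ factor compensates the linear growth of the $\tfrac12\psi\tilde\omega_\psi$ term). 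One more $\xi$-derivative gives $X_3$. Throughout, the initial norms on the right are controlled by \eqref{inspace}, which in turn is bounded by $\kappa$ via the change of variables in \eqref{nvM} from the hypothesis \eqref{assume1} — here one uses that $e^{\psi^2/8}$ and $e^{y^2/8}$ are comparable up to the bounded factors relating $\psi$ and $y$, and that $\rho(\psi)\lesssim e^{\psi^2/4+C\psi}$ by Lemma \ref{barw} so that $\sqrt A\,e^{-\psi^2/8}$ is bounded. Summing the four pieces yields $\|\tilde\omega\|_{\mathbb{X}} \lesssim \kappa + (\text{nonlinear contributions})$.

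The main obstacle is closing the nonlinear term $F$ in \eqref{nonline}: schematically $F \sim \frac{\tilde\omega\tilde\omega_{\psi\psi}}{\sqrt{\bar\omega}+\sqrt\omega} - \frac{\bar\omega_{\psi\psi}}{2\sqrt{\bar\omega}}\frac{\tilde\omega^2}{(\sqrt{\bar\omega}+\sqrt\omega)^2}$, so it is quadratic but with a factor $\tilde\omega_{\psi\psi}$ that is one degree rougher than the energy controls, and with the degenerate weight $\sqrt{\bar\omega}$ in the denominator near $\psi=0$. The strategy is: (i) use Lemma \ref{lemma1}, $k_1\bar\omega\le\omega\le k_2\bar\omega$, so $\sqrt{\bar\omega}+\sqrt\omega \gtrsim \sqrt{\bar\omega}$, turning the first term into $\lesssim \frac{|\tilde\omega|}{\sqrt{\bar\omega}}|\tilde\omega_{\psi\psi}|$; (ii) control $\frac{|\tilde\omega|}{\sqrt{\bar\omega}}$ in $L^\infty_\psi$ by interpolation — near $\psi=0$ write $\tilde\omega = \int_0^\psi\tilde\omega_\psi$ and use $\bar\omega\sim\psi^2$ so $\frac{|\tilde\omega|}{\sqrt{\bar\omega}}\lesssim \|\tilde\omega_\psi\|_{L^\infty}\lesssim \|\tilde\omega_\psi\|_{L^2_\psi}^{1/2}\|\tilde\omega_{\psi\psi}\|_{L^2_\psi}^{1/2}$ (suitably weighted), which by the $X_1,X_2$ bounds is $\lesssim \kappa\, e^{-\frac34\xi}$; for large $\psi$, $\bar\omega\to1$ and the factor is harmless; (iii) pair the result against $\sqrt A$: $\|\sqrt A\,F e^\xi\|_{L^2_{\xi,\psi}} \lesssim \sup_\xi\big(e^{\frac34\xi}\|\tfrac{|\tilde\omega|}{\sqrt{\bar\omega}}\|_{L^\infty}\big)\cdot \|\tfrac{\sqrt A\sqrt{\bar\omega}}{1+\psi}\tilde\omega_{\psi\psi}e^{\frac14\xi}\|_{L^2_{\xi,\psi}} \lesssim \kappa^2$, where the mismatch $e^\xi$ versus $e^{\frac34\xi}+e^{\frac14\xi}$ costs nothing because $\frac34+\frac14 = 1$ and the extra $\kappa$ smallness buys the constant. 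The $\xi$-differentiated nonlinear terms $(AF)_\xi$, $(AF)_{\xi\xi}$ are handled identically, distributing one or two $\xi$-derivatives over the quadratic structure and using that each factor carries its own $e^{-\frac34\xi}$ decay, with the highest-order term $\tilde\omega_{\xi\psi\psi}$ absorbed into the $X_3$ norm via its weight $\frac{\sqrt A\sqrt\omega}{1+\psi}$. Consequently $\|\tilde\omega\|_{\mathbb{X}} \lesssim \kappa + \|\tilde\omega\|_{\mathbb{X}}^2$, and for $\kappa$ small a standard continuity argument (the norm is finite and continuous in $\xi$ by the local theory of Theorem \ref{Oleinik}, and starts small) forces $\|\tilde\omega\|_{\mathbb{X}}\lesssim\kappa$; reading off the five sup-in-$\xi$ pieces of $\mathbb{X}$ gives exactly the claimed decay rates $e^{-\xi}$ and $e^{-\frac34\xi}$.
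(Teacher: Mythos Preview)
Your proposal is correct and follows essentially the same strategy as the paper: weighted energy estimates on \eqref{wei} using the coercivity from Proposition~\ref{eigenvalue}, a $\xi$-differentiated hierarchy for the higher norms, extraction of $\tilde\omega_{\psi\psi}$ from the equation itself, and closure of the quadratic term $F$ by splitting $\psi\le 1$ versus $\psi\ge 1$. The only methodological difference is that the paper closes via a contraction mapping on the ball $\{\|\tilde\omega\|_{\mathbb X}\le C_0\kappa\}$ (Step~5 of Lemma~\ref{exi}) rather than your continuity argument; the two are interchangeable here.

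Two minor corrections that do not affect the outcome. First, near $\psi=0$ one has $\bar\omega(\psi)\sim\psi$, not $\psi^2$, since $\bar\omega_\psi(0)=\sqrt{2}b_0>0$ by Lemma~\ref{barw}; this only makes your bound $|\tilde\omega|/\sqrt{\bar\omega}\lesssim\|\tilde\omega_\psi\|_{L^\infty}$ easier, and in fact the paper uses the stronger $|\tilde\omega|/\bar\omega\lesssim\|\tilde\omega_\psi\|_{L^\infty}$. Second, at the critical weight $e^{2\xi}$ the forcing must enter as $\big(\int_0^\infty e^{\xi}\|\sqrt A F\|_{L^2_\psi}\,d\xi\big)^2$ rather than $\|\sqrt A F e^{\xi}\|_{L^2_{\xi,\psi}}^2$, because there is no spectral margin above the principal eigenvalue $1$ to absorb an $L^2_\xi$ error; the paper does exactly this in \eqref{X01}, and both quantities are in any case bounded by $\|\tilde\omega\|_{\mathbb X}^4$.
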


The proof of Theorem \ref{sta} follows from Lemmas \ref{ini} and \ref{exi}. First we establish the initial data compatibility.
\begin{lemma}\label{ini}
	Under assumption (\ref{assume1}), the transformed initial data $\tilde{\omega}_0(\psi)$ satisfies:
	\begin{equation*}
		\|\tilde{\omega}_0\|_{\mathbb{X}_{0}} \lesssim \kappa \ll 1.
	\end{equation*}
\end{lemma}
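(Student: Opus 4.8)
The plan is to transport the whole $\mathbb{X}_0$-norm back to the physical variable $y$ through the initial change of coordinates $\psi=\psi_0(y):=\int_0^y u_0(y')\,dy'$. By the hypotheses of Theorem~\ref{Oleinik} this is a $C^{2,\alpha}$ increasing bijection of $[0,\infty)$ onto $[0,\infty)$ with $d\psi=u_0\,dy$ and $\partial_\psi=u_0^{-1}\partial_y$, and $0<u_0<1$ forces $\psi_0(y)\le y$. Since $\bar{u}_0=f'(\cdot/\sqrt2)\to1$ faster than any Gaussian (Lemma~\ref{f}), Cauchy--Schwarz gives $\|u_0-\bar{u}_0\|_{L^1_y}\lesssim\|e^{y^2/8}(u_0-\bar{u}_0)\|_{L^2_y}\lesssim\kappa$; in particular $c_0:=\int_0^\infty(1-u_0)\,dy\in(0,\infty)$ and $\psi_0(y)\le y-\tfrac{c_0}{2}$ for $y$ large. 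I would also set $\bar{\psi}_0(y):=\int_0^y\bar{u}_0=\sqrt2\,f(y/\sqrt2)$, the von Mises variable of the Blasius profile, so that $\bar{\omega}(\bar{\psi}_0(y))=\bar{u}_0^2(y)$ and $|\bar{\psi}_0(y)-\psi_0(y)|\le\|u_0-\bar{u}_0\|_{L^1_y}\lesssim\kappa$ uniformly in $y$.

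Next I would record the behaviour of the weights. Since $1-\bar{\omega}$ is super-exponentially small (Lemma~\ref{barw}), $\tfrac{s}{2\sqrt{\bar{\omega}(s)}}=\tfrac s2\bigl(1+O(e^{-cs^2})\bigr)$ for large $s$ and is integrable near $s=0$, so $\rho(\psi)=e^{\psi^2/4+g(\psi)}$ with $0\le g\le g(\infty)<\infty$, i.e. $\rho\sim e^{\psi^2/4}$, and $A=\rho/\sqrt{\bar{\omega}}\sim e^{\psi^2/4}/\sqrt{\bar{\omega}}$ with $\bar{\omega}^{-1/2}$ integrable at $\psi=0$. Pulling back along $\psi=\psi_0(y)$ and using $\psi_0(y)\le y$ together with the gap $\psi_0(y)\le y-\tfrac{c_0}{2}$ for $y$ large, one gets, for every $N$,
\[
\rho(\psi_0(y))\,\bigl(1+\psi_0(y)\bigr)^{N}\lesssim_{N}e^{y^2/4},\qquad A(\psi_0(y))\,\bigl(1+\psi_0(y)\bigr)^{N}\,u_0(y)\lesssim_{N}e^{y^2/4},
\]
the first because $e^{-c_0y/4}$ kills all polynomial factors, the second additionally because near the corner $\bar{\omega}(\psi_0(y))\sim y^2\sim u_0(y)^2$, so that $u_0/\sqrt{\bar{\omega}(\psi_0)}\lesssim1$.

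The heart of the argument is to express $\tilde{\omega}_0,\tilde{\omega}_{0\psi},\tilde{\omega}_{0\xi},\tilde{\omega}_{0\xi\psi}$ in the variable $y$. For the two low-order ones the chain rule applied to $\omega_0(\psi_0(y))=u_0^2(y)$ gives $\omega_{0\psi}=2u_0'$, $\omega_{0\psi\psi}=2u_0''/u_0$ at $\psi_0(y)$, with analogous identities for $\bar{\omega}^{(k)}$ at $\bar{\psi}_0(y)$ in terms of $\bar{u}_0$; subtracting, and using the mean value theorem with $|\bar{\psi}_0-\psi_0|=O(\kappa)$ to move the argument of each $\bar{\omega}^{(k)}$ from $\bar{\psi}_0(y)$ to $\psi_0(y)$, one splits $\partial_\psi^j\tilde{\omega}_0$ at $\psi_0(y)$ into a \emph{local part} bounded by $\sum_{i\le j}|\partial_y^i(u_0-\bar{u}_0)(y)|$ plus a \emph{nonlocal part} bounded by $|\bar{\omega}^{(j+1)}(\psi^\ast)|\,\|u_0-\bar{u}_0\|_{L^1_y}$ with $\psi^\ast$ between $\psi_0(y)$ and $\bar{\psi}_0(y)$. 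For the two $\xi$-derivative terms I would use the error equation~\eqref{error} (equivalently $\tilde{\omega}_\xi-\tfrac12\psi\tilde{\omega}_\psi-\sqrt{\omega}\,\tilde{\omega}_{\psi\psi}-\tfrac{\bar{\omega}_{\psi\psi}}{\sqrt{\bar{\omega}}+\sqrt{\omega}}\tilde{\omega}=0$) at $\xi=\ln d$: it writes $\tilde{\omega}_{0\xi}$ through $\tilde{\omega}_0,\tilde{\omega}_{0\psi},\tilde{\omega}_{0\psi\psi}$ with coefficients $\tfrac12\psi$, $\sqrt{\omega_0}$, $\tfrac{\bar{\omega}_{\psi\psi}}{\sqrt{\bar{\omega}}+\sqrt{\omega_0}}$, and one further $\psi$-differentiation writes $\tilde{\omega}_{0\xi\psi}$ through $\psi$-derivatives of $\tilde{\omega}_0$ up to order $3$, so the translation to $y$ uses exactly $\partial_y^j(u_0-\bar{u}_0)$ with $j\le3$, matching~\eqref{assume1}; the polynomial coefficient $\tfrac12\psi$ is harmless by the first weight bound above. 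The higher-order compatibility conditions at $(0,0)$ are used only to keep all these pointwise expressions bounded near $\psi=0$, where $A$ is integrable.

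Combining: every local-part contribution is dominated by $\int_0^\infty e^{y^2/4}|\partial_y^j(u_0-\bar{u}_0)|^2\,dy=\|e^{y^2/8}\partial_y^j(u_0-\bar{u}_0)\|_{L^2_y}^2\le\kappa^2$, and every nonlocal-part contribution by $\|u_0-\bar{u}_0\|_{L^1_y}^2\int_0^\infty(\rho+A)(\psi_0(y))\,(1+\psi_0)^{N}|\bar{\omega}^{(j+1)}(\psi^\ast)|^2\,u_0\,dy\lesssim\kappa^2$, the integral being finite because for large $y$ one has $\psi^\ast=\psi_0(y)+O(\kappa)$ while $|\bar{\omega}^{(j+1)}(\psi^\ast)|^2$ decays like $e^{-(\psi^\ast)^2/2}$, which strictly beats $\rho(\psi_0(y))\sim e^{\psi_0^2/4}$. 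Hence $\|\tilde{\omega}_0\|_{\mathbb{X}_0}^2\lesssim\sum_{j=0}^3\|e^{y^2/8}\partial_y^j(u_0-\bar{u}_0)\|_{L^2_y}^2+\|u_0-\bar{u}_0\|_{L^1_y}^2\lesssim\kappa^2$. The principal obstacle is precisely this weight bookkeeping: one must check that the shift $\psi_0(y)\le y$ (with strict gap $c_0>0$) costs no Gaussian weight, so that $\rho\sim e^{\psi^2/4}$ matches $(e^{y^2/8})^2$ and absorbs the extra $\psi$-powers generated by the evolution equation; that the singular factor $\bar{\omega}^{-1/2}$ in $A$ is absorbed by the Jacobian near the corner; and that the nonlocal remainders — which appear only because the von Mises variable of $u_0$ differs from that of $\bar{u}_0$ — are integrable against $\rho$, which holds exactly because Blasius-profile derivatives decay faster than $\rho$ grows.
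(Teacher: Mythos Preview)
Your approach is essentially the paper's: pull each term of $\|\tilde{\omega}_0\|_{\mathbb{X}_0}$ back to the $y$-variable via $\psi=\psi_0(y)$, express $\tilde{\omega}_{0\xi}$ and $\tilde{\omega}_{0\xi\psi}$ through the evolution equation, and dominate by the assumed weighted Sobolev norm using $\rho(\psi_0(y))\lesssim e^{y^2/4}$; you are in fact more careful than the paper in explicitly isolating the nonlocal remainder coming from $\psi_0(y)\ne\bar{\psi}_0(y)$, which the paper tacitly identifies with zero when it writes $\tilde{\omega}_0=u_0^2-\bar{u}_0^2$ etc. One minor correction: Theorem~\ref{Oleinik} does not assert $u_0<1$, so justify $c_0>0$ instead by $c_0=\int_0^\infty(1-\bar{u}_0)\,dy+O(\kappa)>0$ for $\kappa$ small, which is what your weight bound actually uses.
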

\begin{proof}
	The modified von Mises transformation yields
	\begin{equation*}
		\psi=\int_{0}^{y} \frac{u(x,y')}{\sqrt{x+1}} dy',
	\end{equation*}
	with asymptotic behavior $\psi \sim y/\sqrt{x+1}$ as $y\rightarrow \infty$. Consequently,
	\begin{equation*}
		\rho(\psi) \sim e^\frac{\psi^2}{4} \sim e^\frac{y^2}{4(x+1)}, \quad \text{as}\quad y\rightarrow \infty.
	\end{equation*}

    The key estimates proceed as follows:
    
    \textit{Perturbation magnitude:}
	\begin{align*}
		\int_{0}^{\infty} A(\psi) \tilde{\omega}_0^2 d\psi &= \int_{0}^{\infty} A(y) \left(u_0^2-\bar{u}_0^2\right)^2\frac{u_0}{\sqrt{d}} d y\\
		& \lesssim \int_{0}^{\infty} \rho(y) \left(u_0-\bar{u}_0\right)^2 \left(u_0+\bar{u}_0 \right)^2 d y \\
		&\lesssim \int_{0}^{\infty} e^{\frac{y^2}{4}} \left(u_0-\bar{u}_0\right)^2 d y \\
		&\lesssim \kappa^2 .
	\end{align*}
	
	\textit{$\psi$-derivative:}
	\begin{align*}
		\int_{0}^{\infty} \rho(\psi) \tilde{\omega}_{0\psi}^2 d\psi &= \int_{0}^{\infty} \rho(y) 4d \left(u_{0y}-\bar{u}_{0y}\right)^2 \frac{u_0}{\sqrt{d}} d y\\
		& \lesssim \int_{0}^{\infty} \rho(y)\left(u_{0y}-\bar{u}_{0y}\right)^2 d y\\
		&\lesssim \int_{0}^{\infty} e^{\frac{y^2}{4}} \left(u_{0y}-\bar{u}_{0y}\right)^2 d y\\
		&\lesssim \kappa^2 .
	\end{align*}

	\textit{$\xi$-derivative:}
	\begin{align*}
		\int_{0}^{\infty} A(\psi) \tilde{\omega}_{0\xi}^2 d\psi &= \int_{0}^{\infty} A(\psi) \left(\frac{\psi}{2}\tilde{\omega}_{0\psi}+\left(\sqrt{\omega_0}\omega_{0\psi\psi}-\sqrt{\bar{\omega}_0}\bar{\omega}_{0\psi\psi}\right)\right)^2 d\psi\\
		&\lesssim \int_{0}^{\infty} A(y) \psi^2 \left(u_{0y}-\bar{u}_{0y}\right)^2 \frac{u_0}{\sqrt{d}} dy + \int_{0}^{\infty} A(y) \left(u_{0yy}-\bar{u}_{0yy}\right)^2 \frac{u_0}{\sqrt{d}} dy\\
		&\lesssim \int_{0}^{\infty} e^{\frac{y^2}{4}} \left(u_{0y}-\bar{u}_{0y}\right)^2 dy+\int_{0}^{\infty} e^{\frac{y^2}{4}} \left(u_{0yy}-\bar{u}_{0yy}\right)^2 dy\\
		&\lesssim \kappa^2 .
	\end{align*}

    \textit{Mixed derivative:}
	\begin{align*}
		\int_{0}^{\infty} \rho(\psi) \tilde{\omega}_{0\xi\psi}^2 d\psi &= \int_{0}^{\infty} \rho(\psi) \left(\frac{\psi}{2}\tilde{\omega}_{0\psi}+\left(\sqrt{\omega_0}\omega_{0\psi\psi}-\sqrt{\bar{\omega}_0}\bar{\omega}_{0\psi\psi}\right)\right)_\psi^2 d\psi\\
		&\lesssim \int_{0}^{\infty} \rho(y)\left(u_{0y}-\bar{u}_{0y}\right)^2 \frac{u_0}{\sqrt{d}}dy + \int_{0}^{\infty} \rho(y) \psi^2 \left(u_{0yy}-\bar{u}_{0yy}\right)^2 \frac{\sqrt{d}}{u_0} dy\\
		&+ \int_{0}^{\infty} \rho(y) \left(u_{0yyy}-\bar{u}_{0yyy}\right)^2 \frac{\sqrt{d}}{u_0} dy\\
		&\lesssim \int_{0}^{\infty} e^{\frac{y^2}{4}} \left(u_{0y}-\bar{u}_{0y}\right)^2 dy+\int_{0}^{\infty} e^{\frac{y^2}{4}} \left(u_{0yy}-\bar{u}_{0yy}\right)^2 dy\\
		&+\int_{0}^{\infty} e^{\frac{y^2}{4}} \frac{\left(u_{0yyy}-\bar{u}_{0yyy}\right)^2}{\bar{u}_0} dy\\
		&\lesssim \kappa^2.
	\end{align*}
	The final inequality follows from application of the compatibility condition \eqref{compatibility} in Theorem~\ref{Oleinik}.
\end{proof}

\begin{lemma}\label{exi}
	There exists $\kappa \in (0,1)$ sufficiently small such that under Theorem \ref{sta}'s assumptions, the perturbation $\tilde{\omega}$ exists globally in space $\|\cdot\|_{\mathbb{X}}$.
\end{lemma}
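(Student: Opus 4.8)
\textbf{Proof proposal for Lemma \ref{exi}.} The plan is to run a continuity (bootstrap) argument in the Banach space $\mathbb{X}$: construct local-in-$\xi$ solutions, derive \emph{a priori} estimates of the form $\|\tilde{\omega}\|_{\mathbb{X}} \lesssim \|\tilde{\omega}_0\|_{\mathbb{X}_0} + \|\tilde{\omega}\|_{\mathbb{X}}^2$, and then invoke the smallness $\|\tilde{\omega}_0\|_{\mathbb{X}_0} \lesssim \kappa$ from Lemma \ref{ini} to absorb the quadratic term and close the estimate on a maximal interval that must therefore be all of $[\ln d, \infty)$. The core of the argument is the hierarchy of weighted energy estimates for $X_0, X_1, X_2, X_3$ in that order.

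First I would establish the $X_0$ estimate. Multiplying the weighted equation \eqref{wei} by $\tilde{\omega} e^{2\xi}$ and integrating in $\psi$, the parabolic term produces $\int \rho \tilde{\omega}_\psi^2$, the zeroth-order term produces $-\int \frac{\bar{\omega}_{\psi\psi}}{2\sqrt{\bar{\omega}}} A \tilde{\omega}^2$, and the crucial structural input is the spectral inequality advertised in the introduction,
\begin{equation*}
	\int_0^\infty \rho \tilde{\omega}_\psi^2\, d\psi - \int_0^\infty \frac{\bar{\omega}_{\psi\psi}}{2\sqrt{\bar{\omega}}} A \tilde{\omega}^2\, d\psi \geq \int_0^\infty A \tilde{\omega}^2\, d\psi,
\end{equation*}
coming from the principal eigenvalue $1$ of $\mathcal{L}$ (Appendix \ref{app:eigenvalue}). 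This turns the differential inequality into $\frac{d}{d\xi}\|\sqrt{A}\tilde{\omega} e^\xi\|_{L^2_\psi}^2 + \|\sqrt{\rho}\tilde{\omega}_\psi e^\xi\|_{L^2_\psi}^2 \lesssim (\text{forcing})$, and the forcing term $\int A F \tilde{\omega} e^{2\xi}$ is controlled using the explicit form \eqref{nonline} of $F$, Lemma \ref{ome}, Lemma \ref{lemma1}, and interpolation/Hardy-type inequalities against the weight $\rho$, yielding a bound by $\|\tilde{\omega}\|_{\mathbb{X}}^2 \cdot (\text{small})$ plus terms absorbable by the left side. Integrating in $\xi$ and using $\|\tilde{\omega}_0\|_{\mathbb{X}_0}\lesssim \kappa$ gives $\|\tilde{\omega}\|_{X_0} \lesssim \kappa + \|\tilde{\omega}\|_{\mathbb{X}}^2$. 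Then I would differentiate \eqref{error} in $\xi$ to get an equation for $\tilde{\omega}_\xi$ with the same principal operator (the commutator $[\partial_\xi, \frac{1}{2}\psi\partial_\psi]=0$, and the coefficients $\sqrt{\bar\omega}, \bar\omega_{\psi\psi}/\sqrt{\bar\omega}$ are $\xi$-independent), run the analogous weighted energy estimate at weight $e^{\frac{3}{2}\xi}$ to obtain the $X_1$ bound, and similarly for $X_2, X_3$ by further differentiation; the degenerate weight $\frac{\sqrt{A}\sqrt{\bar\omega}}{1+\psi}$ on the $\tilde{\omega}_{\psi\psi}$ terms is dictated by recovering second-order $\psi$-regularity from the equation itself (solving for $\sqrt{\bar\omega}\tilde\omega_{\psi\psi}$ in terms of $\tilde\omega_\xi$, $\psi\tilde\omega_\psi$, lower order, and $F$) rather than from a clean energy identity, which is why it appears only as a supremum-in-$\xi$ quantity without a matching dissipation integral. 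Summing the four estimates produces $\|\tilde{\omega}\|_{\mathbb{X}} \leq C_0\kappa + C_1\|\tilde{\omega}\|_{\mathbb{X}}^2$.

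With this a priori inequality in hand, the continuity argument is standard: on the set of $\Xi$ for which $\|\tilde{\omega}\|_{\mathbb{X}([\ln d,\Xi])} \leq 2C_0\kappa$, the a priori estimate upgrades this to $\leq C_0\kappa + 4C_0^2C_1\kappa^2 \leq \frac{3}{2}C_0\kappa$ once $\kappa < \frac{1}{8C_0C_1}$, so the set is both open and closed and hence equals $[\ln d,\infty)$; combined with the local existence theory (which follows from Oleinik's Theorem \ref{Oleinik} transported through the transformation, giving a solution $\tilde\omega$ that is a priori bounded on every compact $\xi$-interval, so only the uniform-in-$\xi$ bound is at stake), this gives global existence in $\mathbb{X}$. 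The main obstacle I anticipate is the forcing/nonlinearity estimates: $F$ in \eqref{nonline} contains the borderline term $\frac{\tilde\omega\tilde\omega_{\psi\psi}}{\sqrt{\bar\omega}+\sqrt\omega}$, and one must carefully pair it against the weights so that the $\tilde\omega_{\psi\psi}$ factor is absorbed by the degenerate $X_2$-type norm and the remaining $\tilde\omega/(\sqrt{\bar\omega}+\sqrt\omega)$ factor is controlled near $\psi=0$ using $\omega, \bar\omega \sim \psi$ and $\tilde\omega(\xi,0)=0$ (so $\tilde\omega/\psi$ is bounded by $\tilde\omega_\psi$ in an averaged sense), all while keeping the exponential weights $\rho \sim e^{\psi^2/4}$ matched on both sides — this weight bookkeeping, together with verifying that each differentiated equation retains the exact zeroth-order coefficient needed for the spectral gap inequality, is where the real work lies.
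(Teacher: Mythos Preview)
Your energy-estimate hierarchy is essentially the paper's approach: test \eqref{wei} against $\tilde\omega e^{2\xi}$ for $X_0$ (using the spectral inequality from Appendix~\ref{app:eigenvalue}), then against $\tilde\omega_\xi e^{3\xi/2}$ for $X_1$, differentiate in $\xi$ and repeat for $X_2, X_3$, recovering the $\tilde\omega_{\psi\psi}$ pieces by solving the equation for $\sqrt{\bar\omega}\tilde\omega_{\psi\psi}$. The nonlinearity bookkeeping you flag is indeed the bulk of the work, and the paper handles it by splitting $\psi\le 1$ versus $\psi\ge 1$ and controlling $\tilde\omega/\bar\omega$ near the origin by $\tilde\omega_\psi$ exactly as you suggest; the resulting estimate is $\|\tilde\omega\|_{\mathbb X}^2 \lesssim \|\tilde\omega_0\|_{\mathbb X_0}^2 + \|\tilde\omega\|_{\mathbb X}^4 + \|\tilde\omega\|_{\mathbb X}^6$ rather than quadratic, but this is immaterial for the smallness argument. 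Two points of departure: first, the paper closes with a \emph{contraction mapping} (freeze $\tilde\omega$ in $F$, solve the linear problem, iterate in a small ball) rather than your bootstrap/continuity argument---both are standard and either works here. Second, your worry about ``each differentiated equation retaining the exact zeroth-order coefficient needed for the spectral gap'' is misplaced: the sharp eigenvalue inequality is invoked \emph{only} for the $X_0$ estimate at weight $e^{\xi}$; the higher norms $X_1,\dots,X_3$ carry the softer weight $e^{3\xi/4}$, so a crude bound $\int \rho r_\psi^2 - \int \frac{\bar\omega_{\psi\psi}}{2\sqrt{\bar\omega}} A r^2 \ge 0$ suffices and the $\frac{3}{4}\int A r^2$ leftover is simply absorbed by lower-order norms already controlled. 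This simplifies the higher-order steps considerably.
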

\begin{proof}
    The proof proceeds through five key steps:
	\begin{enumerate}[Step 1:]      
		\item \textbf{Estimation of $\|\tilde{\omega}\|_{X_0}$.} Applying the energy estimate with test function $\tilde{\omega} e^{2\xi}$ to equation \eqref{wei}, one derives through integration by parts:
		\begin{align*}
			&\frac{1}{2} \frac{\partial}{\partial \xi}\left(e^{2\xi} \int_0^{\infty} A \tilde{\omega}^2 d \psi\right)-e^{2\xi} \int_0^{\infty} A \tilde{\omega}^2 d \psi\\
			+&e^{2 \xi} \int_0^{\infty} \rho \tilde{\omega}_\psi^2 d \psi-e^{2\xi}\int_0^{\infty} \frac{\bar{\omega}_{\psi\psi}}{2\sqrt{\bar{\omega}}} A \tilde{\omega}^2 d \psi= e^{2 \xi} \int_0^{\infty} A F \tilde{w} d \psi.
		\end{align*}
		The proof of Proposition \ref{eigenvalue} in Appendix \ref{app:eigenvalue} yields:
		\begin{equation*}
			\int_0^{\infty} \rho \tilde{\omega}_\psi^2 d \psi-\int_0^{\infty} \frac{\bar{\omega}_{\psi \psi}}{2 \sqrt{\bar{\omega}}} A \tilde{\omega}^2 d \psi \geq \int_0^{\infty} A \tilde{\omega}^2 d \psi.
		\end{equation*}
		This leads to the following inequality:
		\begin{equation*}
			\frac{1}{2} \frac{\partial}{\partial \xi}\left(e^{2 \xi} \int_0^{\infty} A \tilde{\omega}^2 d \psi\right) \leq \left(e^{2 \xi} \int_0^{\infty} A \tilde{\omega}^2 d \psi\right)^{\frac{1}{2}}\left(e^{2\xi} \int_0^{\infty} A F^2 d \psi\right)^{\frac{1}{2}}.
		\end{equation*}
		
		Applying Grönwall's inequality yield the first key estimate:
		\begin{equation}\label{X01}
			\sup _{\xi \geq d}\left\|\sqrt{A} \tilde{\omega} e^\xi \right\|_{L_\psi^2}^2 \lesssim \left\|\sqrt{A} \tilde{\omega}_0\right\|_{L_\psi^2}^2+ \left(\int_0^\infty e^{\xi} \left\|\sqrt{A} F \right\|_{L_{\psi}^2} d\xi\right)^2.
		\end{equation}

		Using the alternative test function $\tilde{\omega}e^{\frac{3}{2}\xi}$ produces
		\begin{align*}
			&\frac{1}{2} \frac{\partial}{\partial \xi}\left(e^{\frac{3}{2}\xi} \int_0^{\infty} A \tilde{\omega}^2 d \psi\right)-\frac{3}{4}e^{\frac{3}{2}\xi} \int_0^{\infty} A \tilde{\omega}^2 d \psi\\
			+&e^{\frac{3}{2} \xi} \int_0^{\infty} \rho \tilde{\omega}_\psi^2 d \psi-e^{\frac{3}{2}\xi}\int_0^{\infty} \frac{\bar{\omega}_{\psi\psi}}{2\sqrt{\bar{\omega}}} A \tilde{\omega}^2 d \psi= e^{\frac{3}{2} \xi} \int_0^{\infty} A F \tilde{w} d \psi,
		\end{align*}
		then one has
		\begin{align*}
			&\frac{1}{2} \frac{\partial}{\partial \xi}\left(e^{\frac{3}{2}\xi} \int_0^{\infty} A \tilde{\omega}^2 d \psi\right)+e^{\frac{3}{2} \xi} \int_0^{\infty} \rho \tilde{\omega}_\psi^2 d \psi \\
			\lesssim & e^{\frac{3}{2}\xi} \int_0^{\infty} A \tilde{\omega}^2 d \psi+e^{\frac{3}{2} \xi} \int_0^{\infty} A F^2 d \psi.
		\end{align*}	
		Integration over $\xi$ gives the secondary estimate:
		\begin{equation}\label{X02}
			\left\|\sqrt{\rho} \tilde{\omega}_\psi e^{\frac{3}{4} \xi}\right\|_{L_{\xi,\psi}^2}^2 \lesssim \left\|\sqrt{A} \tilde{\omega}_0\right\|_{L_\psi^2}^2+\sup_{\xi \geq d}\left\|\sqrt{A} \tilde{\omega} e^\xi\right\|_{L_\psi^2}^2+\left\|\sqrt{A} F e^{\frac{3}{4} \xi}\right\|_{L_{\xi,\psi}^2}^2.
		\end{equation}

		Concentrating on the term $\left\|\sqrt{A} F\right\|_{L_{\psi}^2}$, one knows
		\begin{align*}
			\left\|\sqrt{A} F\right\|_{L_{\psi}^2}^2 &\lesssim \int_{0}^{\infty}\frac{A\tilde{\omega}^2\tilde{\omega}_{\psi\psi}^2}{\bar{\omega}}d\psi+\int_{0}^{\infty}\frac{A\tilde{\omega}^4}{\bar{\omega}^2}\\
			&=:I_1+I_2.
		\end{align*}
		On the one hand, for $\psi\leq 1$, one has
		\begin{align*}
			I_1&\lesssim \left\|\frac{\tilde{\omega}}{\bar{\omega}}\right\|^2_{L_\psi^\infty} \left\|\sqrt{A}\sqrt{\bar{\omega}}\tilde{\omega}_{\psi\psi}\right\|^2_{L_\psi^2}\lesssim \left\|\tilde{\omega}_{\psi}\right\|^2_{L_\psi^\infty} \left\|\frac{\sqrt{A}\sqrt{\bar{\omega}}}{1+\psi}\tilde{\omega}_{\psi\psi}\right\|^2_{L_\psi^2}\\ 
			&\lesssim \left\|\frac{\sqrt{A}\sqrt{\bar{\omega}}}{1+\psi}\tilde{\omega}_{\psi\psi}\right\|^4_{L_\psi^2} \lesssim e^{-3\xi} \|\tilde{\omega}\|_{X_2}^4;\\
			I_2&\lesssim \left\|\frac{\tilde{\omega}}{\bar{\omega}}\right\|^2_{L_\psi^\infty} \left\|\sqrt{A}\tilde{\omega}\right\|^2_{L_\psi^2}\lesssim \left\|\tilde{\omega}_{\psi}\right\|^2_{L_\psi^\infty}
			\left\|\sqrt{A}\tilde{\omega}\right\|^2_{L_\psi^2}\\
			&\lesssim \left\|\frac{\sqrt{A}\sqrt{\bar{\omega}}}{1+\psi}\tilde{\omega}_{\psi\psi}\right\|^2_{L_\psi^2}\left\|\sqrt{A}\tilde{\omega}\right\|^2_{L_\psi^2}\lesssim e^{-\frac{7}{2}\xi} \|\tilde{\omega}\|_{X_0}^2\|\tilde{\omega}\|_{X_2}^2.
		\end{align*}		
		On the other hand, for $\psi\geq 1$, one has
		\begin{align*}
			I_1&\lesssim \left\|\sqrt[4]{A}\tilde{\omega}\right\|^2_{L_\psi^\infty} \int_{0}^{\infty}\sqrt{A}\tilde{\omega}_{\psi\psi} d\psi\\
			&\lesssim \left( \left\|\sqrt{A}\tilde{\omega}\right\|_{L_\psi^2}\left\|\sqrt{\rho}\tilde{\omega}_\psi\right\|_{L_\psi^2}+\left\|\sqrt{A}\tilde{\omega}\right\|_{L_\psi^2}^2\right) \left\|\frac{\sqrt{A}\sqrt{\bar{\omega}}}{1+\psi}\tilde{\omega}_{\psi\psi}\right\|^2_{L_\psi^2}\\
			&\lesssim e^{-\frac{7}{2}\xi} \|\tilde{\omega}\|_{X_0}^2\|\tilde{\omega}\|_{X_2}^2 + e^{-\frac{13}{4}\xi} \|\tilde{\omega}\|_{X_0} \|\tilde{\omega}\|_{X_1}\|\tilde{\omega}\|_{X_2}^2;\\
			I_2&\lesssim \left\|\tilde{\omega}\right\|^2_{L_\psi^\infty}\left\|\sqrt{A}\tilde{\omega}\right\|^2_{L_\psi^2} \lesssim \left\|\sqrt{A}\tilde{\omega}\right\|^3_{L_\psi^2} \left\|\sqrt{\rho}\tilde{\omega}_{\psi}\right\|_{L_\psi^2}\\
			&\lesssim e^{-\frac{15}{4}\xi} \|\tilde{\omega}\|_{X_0}^3 \|\tilde{\omega}\|_{X_1}.
		\end{align*}
		
		Combining these estimates establishes the bounds:
		\begin{equation}\label{AF}
			\begin{aligned}
				&\left(\int_0^\infty e^{\xi} \left\|\sqrt{A} F \right\|_{L_{\psi}^2} d\xi\right)^2  \lesssim \|\tilde{\omega}\|_{\mathbb{X} }^4,\\
				&\left\|\sqrt{A} F e^{\frac{3}{4} \xi}\right\|_{L_{\xi,\psi}^2}^2  \lesssim \|\tilde{\omega}\|_{\mathbb{X} }^4.
			\end{aligned}
		\end{equation}

        The combination of \eqref{space}, \eqref{inspace}, \eqref{X01}, \eqref{X02} and \eqref{AF} yields the fundamental estimate:
		\begin{equation}\label{X0}
			\|\tilde{\omega}\|_{X_0}^2 \lesssim \left\|\sqrt{A} \tilde{\omega}_0\right\|_{L_\psi^2}^2 +\|\tilde{\omega}\|_{\mathbb{X} }^4.
		\end{equation}

		\item \textbf{Estimation of $\|\tilde{\omega}\|_{X_1}$.} Applying the energy estimate with test function $\tilde{\omega}_\xi e^{\frac{3}{2}\xi}$ to equation \eqref{wei}, one obtains through integration by parts:
		\begin{align*}
			&e^{\frac{3}{2} \xi} \int_0^{\infty} A \tilde{\omega}_\xi^2 d \psi+\frac{1}{2} \frac{\partial}{\partial \xi}\left(e^{\frac{3}{2} \xi} \int_0^{\infty} \rho \tilde{\omega}_\psi^2 d \psi\right)-\frac{3}{4} e^{\frac{3}{2} \xi} \int_0^{\infty} \rho \tilde{\omega}_\psi^2 d \psi\\
			-&e^{\frac{3}{2}\xi} \int_0^{\infty} \frac{\bar{\omega}_{\psi \psi}}{2 \sqrt{\bar{\omega}}} A \tilde{\omega} \tilde{\omega}_\xi d \psi =e^{\frac{3}{2}\xi} \int_0^{\infty} A F \tilde{\omega}_\xi d \psi.
		\end{align*}
		This generates the differential inequality:
		\begin{align*}
			&e^{\frac{3}{2} \xi} \int_0^{\infty} A \tilde{\omega}_\xi^2 d \psi+\frac{1}{2} \frac{\partial}{\partial \xi}\left(e^{\frac{3}{2} \xi} \int_0^{\infty} \rho \tilde{\omega}_\psi^2 d \psi\right)\\
			\lesssim & e^{\frac{3}{2} \xi} \int_0^{\infty} \rho \tilde{\omega}_\psi^2 d \psi +e^{\frac{3}{2} \xi} \int_0^{\infty} A \tilde{\omega}^2 d \psi+e^{\frac{3}{2}\xi} \int_0^{\infty} A F^2 d \psi.
		\end{align*}
		Integrating with respect to $\xi$ yields
		\begin{equation}\label{X11}
			\|\tilde{\omega}\|_{X_1}^2 \lesssim \left\|\sqrt{\rho} \tilde{\omega}_{0\psi}\right\|_{L_\psi^2}^2 + \|\tilde{\omega}\|_{X_0}^2 +\left\|\sqrt{A} F e^{\frac{3}{4} \xi}\right\|_{L_{\xi,\psi}^2}^2.
		\end{equation}
		Combining with previous estimates \eqref{AF} and \eqref{X0}, one has
		\begin{equation}\label{X1}
			\|\tilde{\omega}\|_{X_1}^2 \lesssim \|\tilde{\omega}_0\|_{\mathbb{X}_{0}} + \|\tilde{\omega}\|_{\mathbb{X} }^4.
		\end{equation}

		\item \textbf{Estimation of $\|\tilde{\omega}\|_{X_2}$.} Define the derivative quantity $r := \tilde{\omega}_\xi$ satisfying the following equation:
		\begin{equation}\label{r1}
			r_\xi-\frac{1}{2}\psi r_\psi-\sqrt{\bar{\omega}} r_{\psi\psi}-\frac{\bar{\omega}_{\psi\psi}}{2\sqrt{\bar{\omega}}} r =F_\xi.
		\end{equation}
		This yields the weighted equation:
		\begin{equation}\label{r}
			A r_\xi-(\rho r_\psi)_{\psi}-\frac{\bar{\omega}_{\psi\psi}}{\sqrt{\bar{\omega}}+\sqrt{\omega}} A r=AF_\xi.
		\end{equation}
		
		Applying the energy estimate with test function $r e^{\frac{3}{2}\xi}$ to equation \eqref{r} produces:
		\begin{align*}
			&\frac{1}{2} \frac{\partial}{\partial \xi}\left(e^{\frac{3}{2} \xi} \int_0^{\infty} A r^2 d \psi\right)-\frac{3}{4} e^{\frac{3}{2} \xi} \int_0^{\infty} A r^2 d \psi + e^{\frac{3}{2} \xi} \int_0^{\infty} \rho r_\psi^2 d \psi\\
			- & e^{\frac{3}{2}\xi} \int_0^{\infty} \frac{\bar{\omega}_{\psi \psi}}{2 \sqrt{\bar{\omega}}} A r^2 d \psi =e^{\frac{3}{2}\xi} \int_0^{\infty} A F_\xi r d \psi.
		\end{align*}
		This leads to the following inequality:
		\begin{align*}
			&\frac{1}{2} \frac{\partial}{\partial \xi}\left(e^{\frac{3}{2} \xi} \int_0^{\infty} A r^2 d \psi\right)+ e^{\frac{3}{2} \xi} \int_0^{\infty} \rho r_\psi^2 d \psi\\
			\lesssim & e^{\frac{3}{2} \xi} \int_0^{\infty} A r^2 d \psi +e^{\frac{3}{2}\xi} \int_0^{\infty} A F_\xi^2 d \psi.
		\end{align*}
		Integrating with respect to $\xi$, one can obtain
		\begin{equation}\label{X21}
			\begin{aligned}
				&\sup _{\xi \geq d}\left\|\sqrt{A} \tilde{\omega}_\xi e^{\frac{3}{4}\xi}\right\|_{L_\psi^2}^2+\left\|\sqrt{\rho} \tilde{\omega}_{\xi\psi} e^{\frac{3}{4}\xi}\right\|_{L_{\xi,\psi}^2}^2\\
				\lesssim & \left\|\sqrt{A} \tilde{\omega}_{0\xi}\right\|_{L_\psi^2}^2 + \|\tilde{\omega}\|_{X_1}^2 +\left\|\sqrt{A} F_\xi e^{\frac{3}{4} \xi}\right\|_{L_{\xi,\psi}^2}^2.
			\end{aligned}	
		\end{equation}

		Focusing on the nonlinear term $\left\|\sqrt{A} F_\xi\right\|_{L_{\psi}^2}$, one decomposes
		\begin{align*}
			&\left\|\sqrt{A} F_\xi \right\|_{L_{\psi}^2}^2 \\
			\lesssim & \int_{0}^{\infty}\frac{A\tilde{\omega}_\xi^2\tilde{\omega}_{\psi\psi}^2}{\bar{\omega}}d\psi+\int_{0}^{\infty}\frac{A\tilde{\omega}^2\tilde{\omega}_{\xi\psi\psi}^2}{\bar{\omega}}d\psi + \int_{0}^{\infty}\frac{A\tilde{\omega}^2\tilde{\omega}_{\psi\psi}^2 \tilde{\omega}_\xi^2 }{\bar{\omega}^3}d\psi\\
			&+\int_{0}^{\infty}\frac{A\tilde{\omega}^2\tilde{\omega}_\xi^2}{\bar{\omega}^2} + \int_{0}^{\infty}\frac{A\tilde{\omega}^4 \tilde{\omega}_\xi^2}{\bar{\omega}^4}\\
			=: & J_1+J_2+J_3+J_4+J_5.
		\end{align*}
		On the one hand, for $\psi\leq 1$, one has
		\begin{align*}
			J_1+J_2&\lesssim \left\|\frac{\tilde{\omega}_\xi}{\bar{\omega}}\right\|^2_{L_\psi^\infty} \left\|\sqrt{A}\sqrt{\bar{\omega}}\tilde{\omega}_{\psi\psi}\right\|^2_{L_\psi^2} + \left\|\frac{\tilde{\omega}}{\bar{\omega}}\right\|^2_{L_\psi^\infty} \left\|\sqrt{A}\sqrt{\bar{\omega}}\tilde{\omega}_{\xi\psi\psi}\right\|^2_{L_\psi^2}\\
		    &\lesssim \left\|\frac{\sqrt{A}\sqrt{\bar{\omega}}}{1+\psi}\tilde{\omega}_{\xi\psi\psi}\right\|^2_{L_\psi^2} \left\|\frac{\sqrt{A}\sqrt{\bar{\omega}}}{1+\psi}\tilde{\omega}_{\psi\psi}\right\|^2_{L_\psi^2}\\
		    &\lesssim e^{-\frac{3}{2}\xi} \left\|\frac{\sqrt{A}\sqrt{\bar{\omega}}}{1+\psi}\tilde{\omega}_{\xi\psi\psi}\right\|^2_{L_\psi^2} \|\tilde{\omega}\|_{X_2}^2;\\
			J_3&\lesssim \left\|\frac{\tilde{\omega}}{\bar{\omega}}\right\|^2_{L_\psi^\infty} \left\|\frac{\tilde{\omega}_\xi}{\bar{\omega}}\right\|^2_{L_\psi^\infty} \left\|\sqrt{A}\sqrt{\bar{\omega}}\tilde{\omega}_{\psi\psi}\right\|^2_{L_\psi^2}\\
		    &\lesssim \left\|\frac{\sqrt{A}\sqrt{\bar{\omega}}}{1+\psi}\tilde{\omega}_{\xi\psi\psi}\right\|^2_{L_\psi^2} \left\|\frac{\sqrt{A}\sqrt{\bar{\omega}}}{1+\psi}\tilde{\omega}_{\psi\psi}\right\|^4_{L_\psi^2}\\
		    &\lesssim e^{-3\xi} \left\|\frac{\sqrt{A}\sqrt{\bar{\omega}}}{1+\psi}\tilde{\omega}_{\xi\psi\psi}\right\|^2_{L_\psi^2} \|\tilde{\omega}\|_{X_2}^4;\\
			J_4+J_5&\lesssim \left\|\frac{\tilde{\omega}}{\bar{\omega}}\right\|^2_{L_\psi^\infty} \left\|\sqrt{A}\tilde{\omega}_\xi\right\|^2_{L_\psi^2}+\left\|\frac{\tilde{\omega}}{\bar{\omega}}\right\|^4_{L_\psi^\infty} \left\|\sqrt{A}\tilde{\omega}_\xi\right\|^2_{L_\psi^2}\\
			&\lesssim \left\|\frac{\sqrt{A}\sqrt{\bar{\omega}}}{1+\psi}\tilde{\omega}_{\psi\psi}\right\|^2_{L_\psi^2}\left\|\sqrt{A}\tilde{\omega}_\xi\right\|^2_{L_\psi^2}+\left\|\frac{\sqrt{A}\sqrt{\bar{\omega}}}{1+\psi}\tilde{\omega}_{\psi\psi}\right\|^4_{L_\psi^2}\left\|\sqrt{A}\tilde{\omega}_\xi\right\|^2_{L_\psi^2}\\
		    &\lesssim e^{-3\xi} \|\tilde{\omega}\|_{X_0}^2\|\tilde{\omega}\|_{X_2}^2+e^{-\frac{9}{2}\xi} \|\tilde{\omega}\|_{X_0}^2\|\tilde{\omega}\|_{X_2}^4.
		\end{align*}		

		On the other hand, for $\psi\geq 1$, one has
		\begin{align*}
			J_1 &\lesssim \left\|\sqrt[4]{A}\tilde{\omega}_\xi\right\|^2_{L_\psi^\infty} \left\|\frac{\sqrt{A}\sqrt{\bar{\omega}}}{1+\psi}\tilde{\omega}_{\psi\psi}\right\|^2_{L_\psi^2}\\
		    &\lesssim \left( \left\|\sqrt{A}\tilde{\omega}_\xi\right\|_{L_\psi^2}\left\|\sqrt{\rho}\tilde{\omega}_{\xi\psi}\right\|_{L_\psi^2}+\left\|\sqrt{A}\tilde{\omega}_{\xi}\right\|_{L_\psi^2}^2\right) \left\|\frac{\sqrt{A}\sqrt{\bar{\omega}}}{1+\psi}\tilde{\omega}_{\psi\psi}\right\|^2_{L_\psi^2}\\
		    &\lesssim e^{-3\xi} \|\tilde{\omega}\|_{X_2}^3\|\tilde{\omega}\|_{X_3} + e^{-3\xi} \|\tilde{\omega}\|_{X_2}^4;\\
			J_2 &\lesssim \left\|\sqrt[4]{A}\tilde{\omega}\right\|^2_{L_\psi^\infty}\left\|\frac{\sqrt{A}\sqrt{\bar{\omega}}}{1+\psi}\tilde{\omega}_{\xi\psi\psi}\right\|^2_{L_\psi^2}\\
		    &\lesssim e^{-2\xi} \left\|\frac{\sqrt{A}\sqrt{\bar{\omega}}}{1+\psi}\tilde{\omega}_{\xi\psi\psi}\right\|^2_{L_\psi^2} \|\tilde{\omega}\|_{X_0}^2\|+ e^{-\frac{7}{4}\xi} \left\|\frac{\sqrt{A}\sqrt{\bar{\omega}}}{1+\psi}\tilde{\omega}_{\xi\psi\psi}\right\|^2_{L_\psi^2} \|\tilde{\omega}\|_{X_0} \|\tilde{\omega}\|_{X_1};\\
			J_3 &\lesssim \left\|\sqrt[4]{A}\tilde{\omega}\right\|^2_{L_\psi^\infty} \left\|\tilde{\omega}_{\xi}\right\|^2_{L_\psi^\infty} \left\|\frac{\sqrt{A}\sqrt{\bar{\omega}}}{1+\psi}\tilde{\omega}_{\psi\psi}\right\|^2_{L_\psi^2}\\
		    & \lesssim \left(e^{-2\xi} \|\tilde{\omega}\|_{X_0}^2\|+ e^{-\frac{7}{4}\xi}  \|\tilde{\omega}\|_{X_0} \|\tilde{\omega}\|_{X_1}\right)\left(e^{-3\xi} \|\tilde{\omega}\|_{X_2}^3\|\tilde{\omega}\|_{X_3} + e^{-3\xi} \|\tilde{\omega}\|_{X_2}^4\right);\\
			J_4&\lesssim \left\|\tilde{\omega}\right\|^2_{L_\psi^\infty} \left\|\sqrt{A}\tilde{\omega}_{\xi} \right\|^2_{L_\psi^2} \lesssim e^{-\frac{13}{4}\xi} \|\tilde{\omega}\|_{X_0} \|\tilde{\omega}\|_{X_1}\|\tilde{\omega}\|_{X_2}^2;\\
			J_5&\lesssim \left\|\tilde{\omega}\right\|^4_{L_\psi^\infty} \left\|\sqrt{A}\tilde{\omega}_{\xi} \right\|^2_{L_\psi^2} \lesssim e^{-5\xi} \|\tilde{\omega}\|_{X_0}^2 \|\tilde{\omega}\|_{X_1}^2\|\tilde{\omega}\|_{X_2}^2.
		\end{align*}
		
		Synthesizing these estimates yields the bound:
		\begin{equation}\label{AFX}
				\left\|\sqrt{A} F_\xi e^{\frac{3}{4} \xi}\right\|_{L_{\xi,\psi}^2}^2 \lesssim \|\tilde{\omega}\|_{\mathbb{X} }^4 +\|\tilde{\omega}\|_{\mathbb{X} }^6.
		\end{equation}

		From the error equation \eqref{error}, one obtains:
		\begin{equation}\label{X22}
			\begin{aligned}
				&\sup _{\xi \geq d}\left\|\frac{\sqrt{A} \sqrt{\bar{\omega}}}{1+\psi} \tilde{\omega}_{\psi\psi} e^{\frac{3}{4}\xi}\right\|_{L_\psi^2}^2\\
				\lesssim & \sup _{\xi \geq d}\left\|\sqrt{A} \tilde{\omega}_{\xi} e^{\frac{3}{4}\xi}\right\|_{L_\psi^2}^2+\sup _{\xi \geq d}\left\|\sqrt{\rho} \tilde{\omega}_{\psi} e^{\frac{3}{4}\xi}\right\|_{L_\psi^2}^2+\sup _{\xi \geq d}\left\|\sqrt{A} \tilde{\omega} e^{\frac{3}{4}\xi}\right\|_{L_\psi^2}^2+\sup _{\xi \geq d}\left\|\sqrt{A} F e^{\frac{3}{4}\xi}\right\|_{L_\psi^2}^2\\
				\lesssim & \sup _{\xi \geq d}\left\|\sqrt{A} \tilde{\omega}_{\xi} e^{\frac{3}{4}\xi}\right\|_{L_\psi^2}^2 + \|\tilde{\omega}\|_{\mathbb{X} }^4. 
			\end{aligned}
		\end{equation}

		By combining (\ref{X21}), (\ref{X22}), (\ref{X1}) and (\ref{AFX}), one completes the higher-order estimate:
		\begin{equation}\label{X2}
			\|\tilde{\omega}\|_{X_2}^2 \lesssim \|\tilde{\omega}_0\|_{\mathbb{X}_{0}}+ \|\tilde{\omega}\|_{\mathbb{X} }^4+ \|\tilde{\omega}\|_{\mathbb{X} }^6.
		\end{equation}
		
		\item \textbf{Estimation of $\|\tilde{\omega}\|_{X_3}$.} Applying the higher-order energy estimate with test function $r_\xi e^{\frac{3}{2}\xi}$ to equation \eqref{r}, one derives
		\begin{align*}
			&e^{\frac{3}{2} \xi} \int_0^{\infty} A r_\xi^2 d \psi+\frac{1}{2} \frac{\partial}{\partial \xi}\left(e^{\frac{3}{2} \xi} \int_0^{\infty} \rho r_\psi^2 d \psi\right)-\frac{3}{4} e^{\frac{3}{2} \xi} \int_0^{\infty} \rho r_\psi^2 d \psi\\
			- & e^{\frac{3}{2}\xi} \int_0^{\infty} \frac{\bar{\omega}_{\psi \psi}}{2 \sqrt{\bar{\omega}}} A r r_\xi d \psi =e^{\frac{3}{2}\xi} \int_0^{\infty} A F_\xi r_\xi d \psi.
		\end{align*}
		This generates the following inequality:
		\begin{align*}
			& e^{\frac{3}{2} \xi} \int_0^{\infty} A r_\xi^2 d \psi+\frac{1}{2} \frac{\partial}{\partial \xi}\left(e^{\frac{3}{2} \xi} \int_0^{\infty} \rho r_\psi^2 d \psi\right)\\
			\lesssim & e^{\frac{3}{2} \xi} \int_0^{\infty} \rho r_\psi^2 d \psi +e^{\frac{3}{2}\xi} \int_0^{\infty} A r^2 d \psi + e^{\frac{3}{2}\xi} \int_0^{\infty} A F_\xi^2 d \psi.
		\end{align*}
		Integrating with respect to $\xi$ yields
		\begin{equation}\label{X31}
			\begin{aligned}
				&\sup _{\xi \geq d}\left\|\sqrt{\rho} \tilde{\omega}_{\xi\psi} e^{\frac{3}{4}\xi}\right\|_{L_\psi^2}^2+\left\|\sqrt{A} \tilde{\omega}_{\xi\xi} e^{\frac{3}{4}\xi}\right\|_{L_{\xi,\psi}^2}^2\\
				\lesssim & \left\|\sqrt{\rho} \tilde{\omega}_{0\xi\psi}\right\|_{L_\psi^2}^2 + \|\tilde{\omega}\|_{X_1}^2 + \|\tilde{\omega}\|_{X_2}^2+\left\|\sqrt{A} F_\xi e^{\frac{3}{4} \xi}\right\|_{L_{\xi,\psi}^2}^2.
			\end{aligned}	
		\end{equation}

        From the derivative equation \eqref{r1}, one has
		\begin{equation}\label{X32}
			\begin{aligned}
				&\left\|\frac{\sqrt{A} \sqrt{\bar{\omega}}}{1+\psi} \tilde{\omega}_{\xi\psi\psi} e^{\frac{3}{4}\xi}\right\|_{L_{\xi,\psi}^2}^2\\
				\lesssim & \left\|\sqrt{A} \tilde{\omega}_{\xi\xi} e^{\frac{3}{4}\xi}\right\|_{L_{\xi,\psi}^2}^2 + \left\|\sqrt{\rho} \tilde{\omega}_{\xi\psi} e^{\frac{3}{4}\xi}\right\|_{L_{\xi,\psi}^2}^2 + \left\|\sqrt{A} \tilde{\omega}_\xi e^{\frac{3}{4}\xi}\right\|_{L_{\xi,\psi}^2}^2 + \left\|\sqrt{A} F_\xi e^{\frac{3}{4}\xi}\right\|_{L_{\xi,\psi}^2}^2\\
				\lesssim & \left\|\sqrt{A} \tilde{\omega}_{\xi\xi} e^{\frac{3}{4}\xi}\right\|_{L_{\xi,\psi}^2}^2 + \|\tilde{\omega}\|_{X_1}^2 + \|\tilde{\omega}\|_{X_2}^2+\left\|\sqrt{A} F_\xi e^{\frac{3}{4} \xi}\right\|_{L_{\xi,\psi}^2}^2. 
			\end{aligned}
		\end{equation}
		Synthesizing estimates \eqref{X31}, \eqref{X32} with \eqref{X1}, \eqref{X2} and \eqref{AFX} completes the highest-order estimate:
		\begin{equation}\label{X3}
			\|\tilde{\omega}\|_{X_2}^2 \lesssim \|\tilde{\omega}_0\|_{\mathbb{X}_{0}} + \|\tilde{\omega}\|_{\mathbb{X} }^4+ \|\tilde{\omega}\|_{\mathbb{X} }^6.
		\end{equation}

		\item \textbf{Contraction mapping.} Define the solution map $ \mathcal{T} :H^3(G)\rightarrow H^3(G) $ by $\mathcal{T}(\tilde{\omega})= \Pi$ through the parabolic system:
		\begin{align*}
			&\Pi_\xi-\frac{1}{2}\psi \Pi_\psi -\sqrt{\bar{\omega}}\Pi_{\psi\psi}-\frac{\bar{\omega}_{\psi\psi}}{2\sqrt{\bar{\omega}}}\Pi =F(\tilde{\omega}),\\
			&\Pi(\xi,0)=\Pi(\xi,\infty)=0, \quad \Pi_0(\psi)=\tilde{\omega}_0(\psi).
		\end{align*}
		
		Let
		\begin{equation*}
			B=\left\{\tilde{\omega}\in H^3(G) \left| \| \tilde{\omega} \|_{\mathbb{X}}\leq C_0 \kappa \right.\right\}.
		\end{equation*}
		We will show that $\mathcal{T}$ is a contraction mapping in $B$ if 
		\begin{equation*}
			\|\tilde{\omega}_0\|_{\mathbb{X}_{0}} \lesssim \kappa \ll 1.
		\end{equation*}
		It follows from (\ref{X0}), (\ref{X1}), (\ref{X2}), (\ref{X3}) that
		\begin{align*}
			\| \Pi \|_{\mathbb{X}}^2 & \lesssim \|\tilde{\omega}_0\|_{\mathbb{X}_{0}}^2 + \| \tilde{\omega} \|_{\mathbb{X}}^4\\
			&\leq C_1 (1+C_0^4 \kappa^2)\kappa^2.
		\end{align*}
		Choosing the contraction constant $C_0^2=2C_1$ guarantees $\mathcal{T}(B) \subset B$ for sufficiently small $\kappa$. 
		
		For any $\tilde{\omega}_1, \tilde{\omega}_2 \in B$, the difference estimate follows:
		\begin{align*}
			\left\| \mathcal{T}(\tilde{\omega}_1)-\mathcal{T}(\tilde{\omega}_2)\right\|_{\mathbb{X}}^2 & \lesssim  \| \tilde{\omega}_1 - \tilde{\omega}_2 \|_{\mathbb{X}}^2\left(\| \tilde{\omega}_1 \|_{\mathbb{X}}^2+\| \tilde{\omega}_2 \|_{\mathbb{X}}^2\right)\\
			&\lesssim C_0^2 \kappa^2  \| \tilde{\omega}_1 - \tilde{\omega}_2 \|_{\mathbb{X}}^2.
		\end{align*}
		This establishes the strict contraction property when $\kappa$ is small enough. By Banach's fixed point theorem, the error equation \eqref{error} admits a unique solution in $B$.
	\end{enumerate}
\end{proof}

\subsection{Sharp Decay Estimates of $\tilde{\omega}$}
\ 
\newline
\indent 
In this subsection, we derive the sharp decay rates of $\tilde{\omega}$ from Theorem \ref{sta}:
\begin{proposition}\label{sharp1}
	Under the assumptions in Theorem \ref{sta}, the remainder $\tilde{\omega}$ satisfies the following sharp decay estimates:
	\begin{equation}
		\left\|\sqrt{A} \tilde{\omega} \right\|_{L_\psi^2} \lesssim e^{-\xi}, \quad \left\|\sqrt{\rho} \tilde{\omega}_\psi \right\|_{L_\psi^2} \lesssim e^{-\xi}.
	\end{equation}
\end{proposition}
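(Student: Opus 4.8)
The plan is to upgrade the bound on $\|\sqrt{\rho}\,\tilde{\omega}_\psi\|_{L_\psi^2}$ coming from Theorem \ref{sta} from the sub-optimal rate $e^{-\frac34\xi}$ to the sharp $e^{-\xi}$; the companion estimate $\|\sqrt{A}\,\tilde{\omega}\|_{L_\psi^2}\lesssim e^{-\xi}$ is already contained in Theorem \ref{sta} and will simply be quoted. The mechanism is a Lyapunov functional tuned to the spectral structure of $\mathcal{L}$. Write $\langle f,g\rangle_A:=\int_0^\infty A fg\,d\psi$ and $\|f\|_A:=\langle f,f\rangle_A^{1/2}$; in Sturm--Liouville form one has $A\mathcal{L}v=-(\rho v_\psi)_\psi-\frac{\bar{\omega}_{\psi\psi}}{2\sqrt{\bar{\omega}}}Av$, so $\mathcal{L}$ is self-adjoint for $\langle\cdot,\cdot\rangle_A$, with $\langle\mathcal{L}v,v\rangle_A=\int_0^\infty\rho v_\psi^2\,d\psi-\int_0^\infty\frac{\bar{\omega}_{\psi\psi}}{2\sqrt{\bar{\omega}}}Av^2\,d\psi$, and by Proposition \ref{eigenvalue} (Appendix \ref{app:eigenvalue}) $\mathcal{L}\ge 1$, i.e. $\langle\mathcal{L}v,v\rangle_A\ge\|v\|_A^2$. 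I would then track
\[
\Phi(\xi):=\langle(\mathcal{L}-1)\tilde{\omega},\tilde{\omega}\rangle_A=\int_0^\infty\rho\tilde{\omega}_\psi^2\,d\psi-\int_0^\infty\frac{\bar{\omega}_{\psi\psi}}{2\sqrt{\bar{\omega}}}A\tilde{\omega}^2\,d\psi-\int_0^\infty A\tilde{\omega}^2\,d\psi\ \ge 0,
\]
i.e. the part of the energy ``orthogonal to the principal eigenfunction $\psi\bar{\omega}_\psi$''.

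Rewriting the error equation \eqref{error} as $\tilde{\omega}_\xi=-\mathcal{L}\tilde{\omega}+F$ and using self-adjointness together with $\langle(\mathcal{L}-1)\tilde{\omega},\mathcal{L}\tilde{\omega}\rangle_A=\|(\mathcal{L}-1)\tilde{\omega}\|_A^2+\Phi$,
\[
\Phi'(\xi)=2\langle(\mathcal{L}-1)\tilde{\omega},\,-\mathcal{L}\tilde{\omega}+F\rangle_A=-2\|(\mathcal{L}-1)\tilde{\omega}\|_A^2-2\Phi(\xi)+2\langle(\mathcal{L}-1)\tilde{\omega},F\rangle_A.
\]
The decisive point --- and the reason $\Phi$, rather than the raw energies $\int\rho\tilde{\omega}_\psi^2$ or $\langle\mathcal{L}\tilde{\omega},\tilde{\omega}\rangle_A$, is the right object --- is that the weighted Cauchy--Schwarz bound $2\langle(\mathcal{L}-1)\tilde{\omega},F\rangle_A\le 2\|(\mathcal{L}-1)\tilde{\omega}\|_A^2+\tfrac12\|\sqrt{A}\,F\|_{L_\psi^2}^2$ cancels the indefinite quadratic term exactly, leaving the Riccati-type inequality with the critical coefficient $2$:
\[
\Phi'(\xi)+2\,\Phi(\xi)\le \tfrac12\,\|\sqrt{A}\,F\|_{L_\psi^2}^2.
\]
These manipulations are legitimate since $(\mathcal{L}-1)\tilde{\omega}=F-\tilde{\omega}_\xi-\tilde{\omega}$, so $\|(\mathcal{L}-1)\tilde{\omega}\|_A\le\|\sqrt{A}F\|_{L_\psi^2}+\|\sqrt{A}\tilde{\omega}_\xi\|_{L_\psi^2}+\|\sqrt{A}\tilde{\omega}\|_{L_\psi^2}<\infty$ by $\tilde{\omega}\in\mathbb{X}$ and \eqref{AF}, while the boundary contributions from the integrations by parts vanish because $\tilde{\omega}(\xi,0)=\tilde{\omega}_\xi(\xi,0)=0$, $\rho(0)=1$, and $\tilde{\omega},\tilde{\omega}_\xi$ decay with a Gaussian weight as $\psi\to\infty$ by the estimates of Theorem \ref{sta}.

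To close, I would feed in the nonlinear bound already proved in Step 1 of Lemma \ref{exi}, namely $\|\sqrt{A}F\|_{L_\psi^2}^2\lesssim e^{-\frac{13}{4}\xi}\|\tilde{\omega}\|_{\mathbb{X}}^4\lesssim \kappa^4 e^{-\frac{13}{4}\xi}$. Since $\frac{13}{4}>2$, multiplying the Riccati inequality by $e^{2\xi}$ and integrating from the initial value $\xi=\ln d$ gives $e^{2\xi}\Phi(\xi)\lesssim\Phi(\ln d)+\kappa^4$, and $\Phi(\ln d)\le\langle\mathcal{L}\tilde{\omega}_0,\tilde{\omega}_0\rangle_A\lesssim\|\sqrt{\rho}\,\tilde{\omega}_{0\psi}\|_{L_\psi^2}^2+\|\sqrt{A}\,\tilde{\omega}_0\|_{L_\psi^2}^2\lesssim\kappa^2$ by Lemma \ref{ini} (using boundedness of $-\frac{\bar{\omega}_{\psi\psi}}{2\sqrt{\bar{\omega}}}$, cf. Lemma \ref{lemma2}). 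Hence $\Phi(\xi)\lesssim\kappa^2 e^{-2\xi}$. Finally, since $\sqrt{\bar{\omega}}\,\bar{\omega}_{\psi\psi}\le0$ (Lemma \ref{barw}) the integral $-\int_0^\infty\frac{\bar{\omega}_{\psi\psi}}{2\sqrt{\bar{\omega}}}A\tilde{\omega}^2\,d\psi$ is nonnegative, so from the definition of $\Phi$,
\[
\|\sqrt{\rho}\,\tilde{\omega}_\psi\|_{L_\psi^2}^2=\int_0^\infty\rho\tilde{\omega}_\psi^2\,d\psi\le \Phi(\xi)+\|\sqrt{A}\,\tilde{\omega}\|_{L_\psi^2}^2\lesssim \kappa^2 e^{-2\xi}+e^{-2\xi}\lesssim e^{-2\xi},
\]
where $\|\sqrt{A}\,\tilde{\omega}\|_{L_\psi^2}\lesssim e^{-\xi}$ is taken from Theorem \ref{sta}; combined with that estimate this is exactly the assertion. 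The one genuine obstacle is the choice of $\Phi$: the naive energies obey Riccati inequalities with a sub-critical dissipation coefficient (this is precisely why the energy scheme of Theorem \ref{sta} only reaches $e^{-\frac34\xi}$ for $\tilde{\omega}_\psi$), and it is the subtraction of the projection onto the principal eigenfunction --- encoded by the factor $\mathcal{L}-1$ --- that restores the sharp coefficient matching $e^{-2\xi}$; the remainder is bookkeeping already carried out in Lemmas \ref{ini} and \ref{exi}.
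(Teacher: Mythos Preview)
Your argument is correct and takes a genuinely different route from the paper. The paper proceeds in two standard energy steps: first it tests the weighted equation \eqref{wei} against $\tilde{\omega}e^{\frac{5}{2}\xi}$, using the already-known bound $\|\sqrt{A}\tilde{\omega}\|_{L_\psi^2}\lesssim e^{-\xi}$ to control the right-hand side and obtain the \emph{space-time} bound $\|\sqrt{\rho}\,\tilde{\omega}_\psi e^{\frac{5}{4}\xi}\|_{L_{\xi,\psi}^2}^2\lesssim e^{\frac{1}{2}\xi}$; then it tests against $\tilde{\omega}_\xi e^{\frac{5}{2}\xi}$ and feeds in that space-time bound to upgrade to the pointwise estimate $e^{\frac{5}{2}\xi}\int\rho\tilde{\omega}_\psi^2\,d\psi\lesssim e^{\frac{1}{2}\xi}$. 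Your Lyapunov functional $\Phi=\langle(\mathcal{L}-1)\tilde{\omega},\tilde{\omega}\rangle_A$ collapses these two steps into one, and makes the spectral reason for the sharp rate transparent: the factor $\mathcal{L}-1$ annihilates the principal mode, so the resulting Riccati inequality carries the critical coefficient $2$ rather than the sub-critical $\frac{3}{2}$ that limits the raw energies in Theorem \ref{sta}. The paper's approach, by contrast, never needs to write down $(\mathcal{L}-1)\tilde{\omega}$ or its $A$-norm, and so avoids invoking any control on $\|\sqrt{A}\tilde{\omega}_\xi\|_{L_\psi^2}$ beyond what is used in the second multiplier step; your route is a bit more demanding on the qualitative regularity (finiteness of $\|(\mathcal{L}-1)\tilde{\omega}\|_A$ and vanishing of the boundary term $\rho\tilde{\omega}_\psi\tilde{\omega}_\xi$ at infinity), but you correctly note these are supplied by $\tilde{\omega}\in\mathbb{X}$. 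One small correction: the pointwise nonlinear bound from Step~1 of Lemma \ref{exi} is $\|\sqrt{A}F\|_{L_\psi^2}^2\lesssim e^{-3\xi}\|\tilde{\omega}\|_{\mathbb{X}}^4$ (the term $I_1$ for $\psi\le 1$ is the dominant one), not $e^{-\frac{13}{4}\xi}$; since $3>2$ this does not affect your conclusion.
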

\begin{proof}
    From Theorem \ref{sta}, we inherit the fundamental estimate:
	\begin{equation*}
		\left\|\sqrt{A} \tilde{\omega} \right\|_{L_\psi^2} \lesssim e^{-\xi}.
	\end{equation*}
	
	Testing equation (\ref{wei}) with $\tilde{\omega} e^{\frac{5}{2}\xi}$ generates the integral identity:
	\begin{align*}
		&\frac{1}{2} \frac{\partial}{\partial \xi}\left(e^{\frac{5}{2}\xi} \int_0^{\infty} A \tilde{\omega}^2 d \psi\right)-\frac{3}{4}e^{\frac{5}{2}\xi} \int_0^{\infty} A \tilde{\omega}^2 d \psi\\
		+&e^{\frac{5}{2} \xi} \int_0^{\infty} \rho \tilde{\omega}_\psi^2 d \psi-e^{\frac{5}{2}\xi}\int_0^{\infty} \frac{\bar{\omega}_{\psi\psi}}{2\sqrt{\bar{\omega}}} A \tilde{\omega}^2 d \psi= e^{\frac{5}{2} \xi} \int_0^{\infty} A F \tilde{w} d \psi,
	\end{align*}
	which leads to the simplified inequality:
	\begin{align*}
		&\frac{1}{2} \frac{\partial}{\partial \xi}\left(e^{\frac{5}{2}\xi} \int_0^{\infty} A \tilde{\omega}^2 d \psi\right)+e^{\frac{5}{2} \xi} \int_0^{\infty} \rho \tilde{\omega}_\psi^2 d \psi \\
		\lesssim & e^{\frac{5}{2}\xi} \int_0^{\infty} A \tilde{\omega}^2 d \psi+e^{\frac{5}{2} \xi} \int_0^{\infty} A F^2 d \psi.
	\end{align*}	
	Integrating with respect to $\xi$ yields:
	\begin{align*}
		&e^{\frac{5}{2}\xi} \int_0^{\infty} A \tilde{\omega}^2 d \psi+\left\|\sqrt{\rho} \tilde{\omega}_\psi e^{\frac{5}{4} \xi}\right\|_{L_{\xi,\psi}^2}^2\\
		\lesssim & \left\|\sqrt{A} \tilde{\omega}_0\right\|_{L_\psi^2}^2+e^{\frac{1}{2}\xi}\|\tilde{\omega}\|_{X_0}^2+\|\tilde{\omega}\|_{\mathbb{X} }^4\\
		\lesssim & e^{\frac{1}{2}\xi}.
	\end{align*}
	
	Subsequently, testing equation (\ref{wei}) with $\tilde{\omega}_\xi e^{\frac{5}{2}\xi}$ and integrating by parts produces:
	\begin{align*}
		&e^{\frac{5}{2} \xi} \int_0^{\infty} A \tilde{\omega}_\xi^2 d \psi+\frac{1}{2} \frac{\partial}{\partial \xi}\left(e^{\frac{5}{2} \xi} \int_0^{\infty} \rho \tilde{\omega}_\psi^2 d \psi\right)-\frac{5}{4} e^{\frac{5}{2} \xi} \int_0^{\infty} \rho \tilde{\omega}_\psi^2 d \psi\\
		-&e^{\frac{5}{2}\xi} \int_0^{\infty} \frac{\bar{\omega}_{\psi \psi}}{2 \sqrt{\bar{\omega}}} A \tilde{\omega} \tilde{\omega}_\xi d \psi =e^{\frac{5}{2}} \int_0^{\infty} A F \tilde{\omega}_\xi d \psi.
	\end{align*}
	This generates the following inequality:
	\begin{align*}
		&e^{\frac{5}{2} \xi} \int_0^{\infty} A \tilde{\omega}_\xi^2 d \psi+\frac{1}{2} \frac{\partial}{\partial \xi}\left(e^{\frac{5}{2} \xi} \int_0^{\infty} \rho \tilde{\omega}_\psi^2 d \psi\right)\\
		\lesssim & e^{\frac{5}{2} \xi} \int_0^{\infty} \rho \tilde{\omega}_\psi^2 d \psi + e^{\frac{5}{2} \xi} \int_0^{\infty} A \tilde{\omega}^2 d \psi+e^{\frac{5}{2}\xi} \int_0^{\infty} A F^2 d \psi.
	\end{align*}
	Integrating with respect to $\xi$ yields:
	\begin{align*}
		&e^{\frac{5}{2} \xi} \int_0^{\infty} \rho \tilde{\omega}_\psi^2 d \psi + \left\|\sqrt{A} \tilde{\omega}_\xi e^{\frac{5}{4} \xi}\right\|_{L_{\xi,\psi}^2}^2\\
		\lesssim & \left\|\sqrt{\rho} \tilde{\omega}_{0\psi}\right\|_{L_\psi^2}^2 + \left\|\sqrt{\rho} \tilde{\omega}_\psi e^{\frac{5}{4} \xi}\right\|_{L_{\xi,\psi}^2}^2 +e^{\frac{1}{2}\xi} +\|\tilde{\omega}\|_{\mathbb{X} }^4\\
		\lesssim & e^{\frac{1}{2}\xi},
	\end{align*}
	then one has
	\begin{equation*}
		\left\|\sqrt{\rho} \tilde{\omega}_\psi \right\|_{L_\psi^2} \lesssim e^{-\xi}.
	\end{equation*}
\end{proof}

\begin{proposition}\label{sharp2}
	Under the assumptions in Theorem \ref{sta}, the following sharp decay estimates hold:
	\begin{equation}
		\left\|\sqrt{A} \tilde{\omega}_\xi\right\|_{L_\psi^2} \lesssim e^{-\xi}, \quad \left\|\frac{\sqrt{A} \sqrt{\bar{\omega}}}{1+\psi} \tilde{\omega}_{\psi\psi} \right\|_{L_\psi^2} \lesssim e^{-\xi}.
	\end{equation}
\end{proposition}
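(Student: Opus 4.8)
The plan is to run the energy method directly on the $\xi$-derivative $r:=\tilde\omega_\xi$, exploiting that the operator $\mathcal{L}$ has principal eigenvalue exactly $1$, so that $r$ should decay at the critical rate $e^{-\xi}$. Testing with an exponentially weighted multiplier $e^{c\xi}$ would only produce the rate $e^{-(1-\varepsilon)\xi}$, since the eigenvalue inequality is saturated on $r$ (the leading behavior is $\tilde\omega\approx c\,e^{-\xi}\psi\bar\omega_\psi$ and $\tilde\omega_\xi\approx-\tilde\omega$, both in the top eigenspace). Instead I would reduce matters to a scalar first-order ODE inequality for $E(\xi):=\|\sqrt{A}\,\tilde\omega_\xi\|_{L^2_\psi}^2$ and close it with the integrating factor $e^{\xi}$. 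Differentiating \eqref{error} in $\xi$ gives \eqref{r1}; multiplying by $A=\rho/\sqrt{\bar\omega}$ recasts it in divergence form $A r_\xi-(\rho r_\psi)_\psi-\tfrac{\bar\omega_{\psi\psi}}{2\sqrt{\bar\omega}}Ar=AF_\xi$. Since $\tilde\omega(\xi,0)=0$ forces $r(\xi,0)=0$ and the finiteness of $\|\tilde\omega\|_{\mathbb{X}}$ supplies the decay as $\psi\to\infty$, testing against $r$ and integrating by parts yields
\begin{equation*}
\frac{1}{2}\frac{d}{d\xi}\int_0^\infty Ar^2\,d\psi+\int_0^\infty\rho r_\psi^2\,d\psi-\int_0^\infty\frac{\bar\omega_{\psi\psi}}{2\sqrt{\bar\omega}}Ar^2\,d\psi=\int_0^\infty A\,F_\xi\,r\,d\psi .
\end{equation*}
Applying the eigenvalue inequality from the proof of Proposition~\ref{eigenvalue} (the one already invoked in Step~1 of Lemma~\ref{exi}) with $v=r$, the two potential terms are bounded below by $E(\xi)$, so $\tfrac12 E'+E\le E^{1/2}G^{1/2}$ with $G(\xi):=\|\sqrt{A}\,F_\xi\|_{L^2_\psi}^2$, hence $\tfrac{d}{d\xi}E^{1/2}\le-E^{1/2}+G^{1/2}$ and therefore $e^{\xi}E^{1/2}(\xi)\le e^{d}E^{1/2}(d)+\int_d^\xi e^{\eta}G^{1/2}(\eta)\,d\eta$.

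It then remains to show $\int_d^\infty e^{\eta}\|\sqrt{A}\,F_\xi(\eta)\|_{L^2_\psi}\,d\eta<\infty$. I would reuse the decomposition $\|\sqrt{A}\,F_\xi\|_{L^2_\psi}^2\lesssim J_1+\cdots+J_5$ from Lemma~\ref{exi}, now inserting the upgraded rates $\|\sqrt{A}\,\tilde\omega\|_{L^2_\psi}+\|\sqrt\rho\,\tilde\omega_\psi\|_{L^2_\psi}\lesssim e^{-\xi}$ from Proposition~\ref{sharp1} together with the rates of Theorem~\ref{sta}. The terms built only from $\tilde\omega,\tilde\omega_\xi,\tilde\omega_\psi,\tilde\omega_{\psi\psi}$ are controlled pointwise in $\xi$ (by the $\sup_\xi$ parts of $X_0,X_1,X_2$) and turn out to be $\lesssim e^{-3\xi}$, hence contribute $\int e^{\eta}e^{-3\eta/2}\,d\eta<\infty$. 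The delicate terms are those carrying $\tilde\omega_{\xi\psi\psi}$ (coming from $\partial_\xi(\tilde\omega\tilde\omega_{\psi\psi})$), which are only space--time integrable, of the form $J\lesssim e^{-2a\eta}h(\eta)$ with $\int e^{3\eta/2}h(\eta)\,d\eta<\infty$ (via the $X_3$ norm), the prefactor exponent $a$ being comfortably larger than $\tfrac14$ once the improved $L^\infty_\psi$ bounds for $\tilde\omega$ and $\tilde\omega_\psi$ (from Proposition~\ref{sharp1}) are used. For such a term, Cauchy--Schwarz in $\eta$ gives $\int e^{\eta}e^{-a\eta}h^{1/2}\,d\eta\le\big(\int e^{2(1-a)\eta-3\eta/2}\,d\eta\big)^{1/2}\big(\int e^{3\eta/2}h\,d\eta\big)^{1/2}<\infty$ since $2(1-a)-\tfrac32<0$. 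Hence $e^{\xi}E^{1/2}(\xi)$ stays bounded, i.e. $\|\sqrt{A}\,\tilde\omega_\xi\|_{L^2_\psi}\lesssim e^{-\xi}$.

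The second estimate is then algebraic. From \eqref{error}, $\sqrt{\bar\omega}\,\tilde\omega_{\psi\psi}=\tilde\omega_\xi-\tfrac12\psi\tilde\omega_\psi-\tfrac{\bar\omega_{\psi\psi}}{2\sqrt{\bar\omega}}\tilde\omega-F$; multiplying by $\tfrac{\sqrt{A}}{1+\psi}$ and taking $L^2_\psi$ norms I would estimate $\big\|\tfrac{\sqrt{A}}{1+\psi}\tilde\omega_\xi\big\|\le\|\sqrt{A}\,\tilde\omega_\xi\|\lesssim e^{-\xi}$; $\big\|\tfrac{\sqrt{A}\,\psi}{1+\psi}\tilde\omega_\psi\big\|\lesssim\|\sqrt\rho\,\tilde\omega_\psi\|\lesssim e^{-\xi}$, using $\tfrac{A\psi^2}{(1+\psi)^2}\lesssim\rho$ (since $\tfrac{\psi^2}{\sqrt{\bar\omega}(1+\psi)^2}$ is bounded, vanishing like $\psi^{3/2}$ at $0$ and tending to $1$ at $\infty$); $\big\|\tfrac{\sqrt{A}}{1+\psi}\tfrac{\bar\omega_{\psi\psi}}{\sqrt{\bar\omega}}\tilde\omega\big\|\lesssim\|\sqrt{A}\,\tilde\omega\|\lesssim e^{-\xi}$, using $|\bar\omega_{\psi\psi}/\sqrt{\bar\omega}|=|f'''/(f')^2|\lesssim1$ from Lemma~\ref{lemma2}; and $\big\|\tfrac{\sqrt{A}}{1+\psi}F\big\|\le\|\sqrt{A}\,F\|\lesssim e^{-\xi}$ from the $F$-estimates of Lemma~\ref{exi} with the upgraded rates. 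Summing gives $\big\|\tfrac{\sqrt{A}\sqrt{\bar\omega}}{1+\psi}\tilde\omega_{\psi\psi}\big\|_{L^2_\psi}\lesssim e^{-\xi}$, completing the proof.

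The step I expect to be the main obstacle is the forcing bound $\int_d^\infty e^{\eta}\|\sqrt{A}\,F_\xi\|_{L^2_\psi}\,d\eta<\infty$: unlike in Proposition~\ref{sharp1}, $F_\xi$ carries the genuinely third-order quantity $\tilde\omega_{\xi\psi\psi}$, which the energy framework controls only in $L^2_{\xi,\psi}$ through the $X_3$ norm, not pointwise in $\xi$, so the argument lives or dies on squeezing enough extra decay out of the prefactors $\tilde\omega,\tilde\omega_\xi$ — now known to decay at the full rate $e^{-\xi}$ by Theorem~\ref{sta} and Proposition~\ref{sharp1} — for the time integral to converge after Cauchy--Schwarz. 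A routine side point is that $E$ may be taken strictly positive (otherwise replace $E$ by $E+\delta$ and let $\delta\to0$) so that the scalar inequality $\tfrac{d}{d\xi}E^{1/2}\le-E^{1/2}+G^{1/2}$ is legitimate, and that all boundary terms in the integrations by parts vanish thanks to $r(\xi,0)=0$ and the weighted decay built into $\mathbb{X}$.
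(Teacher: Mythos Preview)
Your proof is correct but follows a genuinely different route from the paper. The paper does \emph{not} invoke the eigenvalue inequality on $r=\tilde\omega_\xi$; instead it tests \eqref{r} with $re^{\frac{5}{2}\xi}$, drops the (nonnegative) potential term $-\int\frac{\bar\omega_{\psi\psi}}{2\sqrt{\bar\omega}}Ar^2$, and after integrating in $\xi$ bounds the dangerous term $\int_d^\xi e^{\frac{5}{2}\eta}\|\sqrt{A}\,\tilde\omega_\xi\|^2\,d\eta$ by $e^{\xi/2}$ using the space--time estimate $\|\sqrt{A}\,\tilde\omega_\xi e^{\frac{5}{4}\xi}\|_{L^2_{\xi,\psi}}^2\lesssim e^{\xi/2}$ already obtained inside the proof of Proposition~\ref{sharp1}. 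So the paper's argument is a pure bootstrap from the previous proposition and only needs the crude forcing bound $\|\sqrt{A}\,F_\xi e^{\frac{3}{4}\xi}\|_{L^2_{\xi,\psi}}^2\lesssim\|\tilde\omega\|_{\mathbb X}^4$ from \eqref{AFX}. Your approach instead reruns the spectral-gap mechanism of Step~1 of Lemma~\ref{exi} on $r$, reducing to the scalar inequality $(e^\xi E^{1/2})'\le e^\xi G^{1/2}$; this is conceptually cleaner (it is exactly the mechanism that gave the sharp rate for $\tilde\omega$ itself) but trades the easy bootstrap input for the sharper forcing requirement $\int e^\eta\|\sqrt{A}F_\xi\|_{L^2_\psi}\,d\eta<\infty$. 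Your verification of that integral is sound: the $J_i$ not carrying $\tilde\omega_{\xi\psi\psi}$ are already $\lesssim e^{-3\xi}$ pointwise from the $X_2$ bounds alone, and for the $\tilde\omega_{\xi\psi\psi}$ terms the prefactor exponent is at least $a=\tfrac34$ (via $\|\tilde\omega_\psi\|_{L^\infty}\lesssim\|\tfrac{\sqrt{A}\sqrt{\bar\omega}}{1+\psi}\tilde\omega_{\psi\psi}\|_{L^2}\lesssim e^{-\frac34\xi}$ and, for $\psi\ge1$, $\|\sqrt[4]{A}\tilde\omega\|_{L^\infty}^2\lesssim e^{-2\xi}$ from Proposition~\ref{sharp1}), which comfortably clears the threshold $a>\tfrac14$ in your Cauchy--Schwarz step. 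One small inaccuracy: Proposition~\ref{sharp1} gives improved $L^2_\psi$ bounds, not $L^\infty_\psi$ bounds; the $L^\infty$ improvements you allude to follow from those $L^2$ bounds by the same Sobolev-type interpolation used in Lemma~\ref{exi}, and in fact the argument closes without needing them. Your derivation of the second estimate from \eqref{error} matches the paper's.
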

\begin{proof}
	Testing equation (\ref{r}) with $r e^{\frac{5}{2}\xi}$ and integrating by parts yields:
	\begin{align*}
		&\frac{1}{2} \frac{\partial}{\partial \xi}\left(e^{\frac{5}{2} \xi} \int_0^{\infty} A r^2 d \psi\right)-\frac{5}{4} e^{\frac{5}{2} \xi} \int_0^{\infty} A r^2 d \psi + e^{\frac{5}{2} \xi} \int_0^{\infty} \rho r_\psi^2 d \psi\\
		- & e^{\frac{5}{2}\xi} \int_0^{\infty} \frac{\bar{\omega}_{\psi \psi}}{2 \sqrt{\bar{\omega}}} A r^2 d \psi =e^{\frac{5}{2}\xi} \int_0^{\infty} A F_\xi r d \psi,
	\end{align*}
	which leads to the following inequality:
	\begin{align*}
		& \frac{\partial}{\partial \xi}\left(e^{\frac{5}{2} \xi} \int_0^{\infty} A r^2 d \psi\right)+ e^{\frac{5}{2} \xi} \int_0^{\infty} \rho r_\psi^2 d \psi\\
		\lesssim & e^{\frac{5}{2} \xi} \int_0^{\infty} A r^2 d \psi +e^{\frac{5}{2}\xi} \int_0^{\infty} A F_\xi^2 d \psi.
	\end{align*}
	Integrating with respect to $\xi$ establishes:
	\begin{align*}
		&e^{\frac{5}{2} \xi} \int_0^{\infty} A r^2 d \psi+\left\|\sqrt{\rho} \tilde{\omega}_{\xi\psi} e^{\frac{5}{4}\xi}\right\|_{L_{\xi,\psi}^2}^2\\
		\lesssim & \left\|\sqrt{A} \tilde{\omega}_{0\xi}\right\|_{L_\psi^2}^2 + \left\|\sqrt{A} \tilde{\omega}_\xi e^{\frac{5}{4} \xi}\right\|_{L_{\xi,\psi}^2}^2 +\|\tilde{\omega}\|_{\mathbb{X} }^4\\
		\lesssim & e^{\frac{1}{2}\xi}.
	\end{align*}	
	This implies the first decay estimate:
	\begin{equation*}
		\left\|\sqrt{A} \tilde{\omega}_\xi\right\|_{L_\psi^2} \lesssim e^{-\xi}.
	\end{equation*}

	Examining equation (\ref{error}) reveals the relationship:
	\begin{equation*}
		\left\|\frac{\sqrt{A} \sqrt{\bar{\omega}}}{1+\psi} \tilde{\omega}_{\psi\psi} \right\|_{L_\psi^2} \lesssim \left\|\sqrt{A} \tilde{\omega}_\xi\right\|_{L_\psi^2} +\left\|\sqrt{\rho} \tilde{\omega}_\psi \right\|_{L_\psi^2}+\left\|\sqrt{A} \tilde{\omega} \right\|_{L_\psi^2}+\left\|\sqrt{A} F \right\|_{L_\psi^2}.
	\end{equation*}
	
	Previous analysis in Lemma \ref{exi} demonstrates:
	\begin{equation*}
		\left\|\sqrt{A} F \right\|_{L_\psi^2} \lesssim e^{-3\xi} \|\tilde{\omega}\|_{\mathbb{X} }^4.
	\end{equation*}
	Consequently, one establishes the second decay estimate:
	\begin{equation*}
		\left\|\frac{\sqrt{A} \sqrt{\bar{\omega}}}{1+\psi} \tilde{\omega}_{\psi\psi} \right\|_{L_\psi^2} \lesssim e^{-\xi}.
	\end{equation*}
\end{proof}

\begin{proposition}\label{sharp3}
	Under the assumptions in Theorem \ref{sta}, the mixed derivative satisfies:
	\begin{equation}
		\left\|\sqrt{\rho} \tilde{\omega}_{\xi\psi} \right\|_{L_\psi^2} \lesssim e^{-\xi}.
	\end{equation}
\end{proposition}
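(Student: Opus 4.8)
My plan is to run a weighted energy estimate on the equation~\eqref{r} for $r=\tilde\omega_\xi$, one derivative higher than the estimate used for Proposition~\ref{sharp2}: rather than testing with $re^{\frac52\xi}$ I would test with $r_\xi e^{\frac52\xi}$. Multiplying~\eqref{r} by $r_\xi e^{\frac52\xi}$ and integrating over $\psi\in(0,\infty)$, the boundary terms created when $-(\rho r_\psi)_\psi$ is integrated by parts vanish, since $r=\tilde\omega_\xi$ and $r_\xi=\tilde\omega_{\xi\xi}$ both vanish at $\psi=0$ (differentiate $\tilde\omega(\xi,0)=0$ in $\xi$) and decay as $\psi\to\infty$. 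The principal part produces $\frac12\partial_\xi\bigl(e^{\frac52\xi}\int_0^\infty\rho r_\psi^2\,d\psi\bigr)+e^{\frac52\xi}\int_0^\infty A r_\xi^2\,d\psi$, while the term $-\frac54 e^{\frac52\xi}\int_0^\infty\rho r_\psi^2\,d\psi$ from differentiating the $\xi$-weight, the zeroth-order term (bounded via $|\mathcal{A}|\leq M$, Lemma~\ref{lemma2}), and the forcing $e^{\frac52\xi}\int_0^\infty AF_\xi r_\xi\,d\psi$ are all controlled by Cauchy--Schwarz and Young's inequality, a small multiple of $e^{\frac52\xi}\int_0^\infty A r_\xi^2\,d\psi$ being moved to the left.

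This should give the differential inequality
\begin{align*}
	&\frac12\frac{\partial}{\partial\xi}\Bigl(e^{\frac52\xi}\int_0^\infty\rho r_\psi^2\,d\psi\Bigr)+\frac12 e^{\frac52\xi}\int_0^\infty A r_\xi^2\,d\psi\\
	\lesssim & e^{\frac52\xi}\int_0^\infty\rho r_\psi^2\,d\psi+e^{\frac52\xi}\int_0^\infty A r^2\,d\psi+e^{\frac52\xi}\int_0^\infty A F_\xi^2\,d\psi,
\end{align*}
and integrating over $\xi$ would yield
\begin{align*}
	e^{\frac52\xi}\int_0^\infty\rho\,\tilde\omega_{\xi\psi}^2\,d\psi &\lesssim\bigl\|\sqrt\rho\,\tilde\omega_{0\xi\psi}\bigr\|_{L_\psi^2}^2+\bigl\|\sqrt\rho\,\tilde\omega_{\xi\psi}e^{\frac54\xi}\bigr\|_{L_{\xi,\psi}^2}^2\\
	&\quad+\bigl\|\sqrt A\,\tilde\omega_\xi e^{\frac54\xi}\bigr\|_{L_{\xi,\psi}^2}^2+\bigl\|\sqrt A\,F_\xi e^{\frac54\xi}\bigr\|_{L_{\xi,\psi}^2}^2.
\end{align*}

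I would then bound the four terms on the right. The first is $\lesssim\kappa^2$ by Lemma~\ref{ini}. The second, $\bigl\|\sqrt\rho\,\tilde\omega_{\xi\psi}e^{\frac54\xi}\bigr\|_{L_{\xi,\psi}^2}^2\lesssim e^{\frac12\xi}$, was already obtained along the way in the proof of Proposition~\ref{sharp2}. For the third, Proposition~\ref{sharp2} gives $\|\sqrt A\,\tilde\omega_\xi\|_{L_\psi^2}\lesssim e^{-\xi}$, hence $\bigl\|\sqrt A\,\tilde\omega_\xi e^{\frac54\xi}\bigr\|_{L_{\xi,\psi}^2}^2=\int_{\ln d}^\xi e^{\frac52 s}\|\sqrt A\,\tilde\omega_\xi\|_{L_\psi^2}^2\,ds\lesssim\int_{\ln d}^\xi e^{\frac12 s}\,ds\lesssim e^{\frac12\xi}$. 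The fourth I would treat by revisiting the splitting $\|\sqrt A F_\xi\|_{L_\psi^2}^2\lesssim J_1+\cdots+J_5$ from Step~4 of Lemma~\ref{exi}: each $J_i$ carries a prefactor $e^{-c\xi}$ with $c\geq\frac32$ times products of $\mathbb{X}$-seminorms, and the only genuinely $\xi$-dependent factor surviving in the worst terms is $\bigl\|\frac{\sqrt A\sqrt{\bar\omega}}{1+\psi}\tilde\omega_{\xi\psi\psi}\bigr\|_{L_\psi^2}^2$, which after multiplication by $e^{\frac52\xi}e^{-\frac32\xi}=e^{\xi}\leq e^{\frac32\xi}$ and $\xi$-integration is absorbed into $\bigl\|\frac{\sqrt A\sqrt{\bar\omega}}{1+\psi}\tilde\omega_{\xi\psi\psi}e^{\frac34\xi}\bigr\|_{L_{\xi,\psi}^2}^2\lesssim\|\tilde\omega\|_{X_3}^2$; this yields $\bigl\|\sqrt A\,F_\xi e^{\frac54\xi}\bigr\|_{L_{\xi,\psi}^2}^2\lesssim\|\tilde\omega\|_{\mathbb{X}}^4+\|\tilde\omega\|_{\mathbb{X}}^6\lesssim 1$. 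Putting the four bounds together gives $e^{\frac52\xi}\int_0^\infty\rho\,\tilde\omega_{\xi\psi}^2\,d\psi\lesssim e^{\frac12\xi}$, i.e. $\|\sqrt\rho\,\tilde\omega_{\xi\psi}\|_{L_\psi^2}^2\lesssim e^{-2\xi}$, which is precisely the asserted estimate.

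The step I expect to be the main obstacle is the forcing bound $\bigl\|\sqrt A\,F_\xi e^{\frac54\xi}\bigr\|_{L_{\xi,\psi}^2}^2\lesssim 1$: relative to \eqref{AFX} there is an extra factor $e^{\frac12\xi}$ that must be absorbed, and this is possible only because the improved pointwise-in-$\xi$ decay of Propositions~\ref{sharp1}--\ref{sharp2} upgrades every $J_i$-prefactor, while the surviving top-order quantity $\tilde\omega_{\xi\psi\psi}$ is integrable in $\xi$ against the heavier weight $e^{\frac34\xi}$ provided by the $X_3$-norm; since $e^{-c\xi}$ with $c\geq\frac32$ more than compensates, no logarithmic loss occurs. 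The remaining manipulations — the integration by parts, the Young-inequality absorptions, and the elementary $\xi$-integrals — are routine.
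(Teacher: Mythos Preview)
Your proposal is correct and follows essentially the same route as the paper: test \eqref{r} with $r_\xi e^{\frac{5}{2}\xi}$, integrate in $\xi$, and close using the initial-data bound from Lemma~\ref{ini}, the space--time bound $\|\sqrt\rho\,\tilde\omega_{\xi\psi}e^{\frac54\xi}\|_{L_{\xi,\psi}^2}^2\lesssim e^{\frac12\xi}$ from the proof of Proposition~\ref{sharp2}, the pointwise decay of $\|\sqrt A\,\tilde\omega_\xi\|_{L_\psi^2}$, and the nonlinear forcing bound. Your treatment of $\|\sqrt A\,F_\xi e^{\frac54\xi}\|_{L_{\xi,\psi}^2}^2$ is in fact more explicit than the paper's (which simply records $\|\tilde\omega\|_{\mathbb X}^4$); note only that the $J_i$ splitting appears in Step~3, not Step~4, of Lemma~\ref{exi}.
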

\begin{proof}
    Applying the test function $r_\xi e^{\frac{5}{2}\xi}$ to (\ref{r}) and integrating by parts produces:
	\begin{align*}
		&e^{\frac{5}{2} \xi} \int_0^{\infty} A r_\xi^2 d \psi+\frac{1}{2} \frac{\partial}{\partial \xi}\left(e^{\frac{5}{2} \xi} \int_0^{\infty} \rho r_\psi^2 d \psi\right)-\frac{5}{4} e^{\frac{5}{2} \xi} \int_0^{\infty} \rho r_\psi^2 d \psi\\
		- & e^{\frac{5}{2}\xi} \int_0^{\infty} \frac{\bar{\omega}_{\psi \psi}}{2 \sqrt{\bar{\omega}}} A r r_\xi d \psi =e^{\frac{5}{2}\xi} \int_0^{\infty} A F_\xi r_\xi d \psi,
	\end{align*}
	which leads to the simplified inequality:
	\begin{align*}
		& e^{\frac{5}{2} \xi} \int_0^{\infty} A r_\xi^2 d \psi+\frac{1}{2} \frac{\partial}{\partial \xi}\left(e^{\frac{5}{2} \xi} \int_0^{\infty} \rho r_\psi^2 d \psi\right)\\
		\lesssim & e^{\frac{5}{2} \xi} \int_0^{\infty} \rho r_\psi^2 d \psi +e^{\frac{5}{2}\xi} \int_0^{\infty} A r^2 d \psi + e^{\frac{5}{2}\xi} \int_0^{\infty} A F_\xi^2 d \psi.
	\end{align*}
	Integrating with respect to $\xi$ gives:
	\begin{align*}
		&e^{\frac{5}{2} \xi} \int_0^{\infty} \rho r_\psi^2 d \psi+\left\|\sqrt{A} r_{\xi} e^{\frac{5}{4}\xi}\right\|_{L_{\xi,\psi}^2}^2\\
		\lesssim & \left\|\sqrt{\rho} \tilde{\omega}_{0\xi\psi}\right\|_{L_\psi^2}^2 + \left\|\sqrt{\rho} r_{\psi} e^{\frac{5}{4}\xi}\right\|_{L_{\xi,\psi}^2}^2 + e^{\frac{1}{2}\xi}+\|\tilde{\omega}\|_{\mathbb{X} }^4\\
		\lesssim & e^{\frac{1}{2}\xi}.
	\end{align*}	
	This final estimate proves the required decay property:
	\begin{equation*}
		\left\|\sqrt{\rho} \tilde{\omega}_{\xi\psi} \right\|_{L_\psi^2} \lesssim e^{-\xi}.
	\end{equation*}
\end{proof}

\subsection{Sharp $L^\infty$ Decay Estimates of $u$}
\ 
\newline
\indent 
In this section, we demonstrate our main result, Theorem \ref{energy1}.
\begin{proof}[Proof of Theorem \ref{energy1}]
    Under the hypotheses of Theorem \ref{sta} and through applications of Proposition \ref{sharp1}-\ref{sharp3}, one establishes the following low-regularity estimates:
	\begin{equation}
		\left\|\partial_y^j (u-\bar{u}) (x,y)\right\|_{L^{\infty}_y}\lesssim (x+1)^{-1-\frac{j}{2}}, \quad j=0,1,2.
	\end{equation}
	
	Our previous decay results in Propositions \ref{sharp1}-\ref{sharp3} yield essential $L^\infty$ bounds:

	\begin{align*}
		\left\| \tilde{\omega} \right\|_{L^{\infty}_\psi} &\lesssim \left\|\sqrt{\rho} \tilde{\omega}_\psi \right\|_{L_\psi^2} \lesssim e^{-\xi},\\
		\left\| \tilde{\omega}_\psi \right\|_{L^{\infty}_\psi} &\lesssim \left\|\frac{\sqrt{A} \sqrt{\bar{\omega}}}{1+\psi} \tilde{\omega}_{\psi\psi} \right\|_{L_\psi^2} \lesssim e^{-\xi},\\
		\left\| \tilde{\omega}_\xi \right\|_{L^{\infty}_\psi} &\lesssim \left\|\sqrt{\rho} \tilde{\omega}_{\xi\psi} \right\|_{L_\psi^2} \lesssim e^{-\xi}.\\
	\end{align*}
	
	Through the modified von Mises transformation:
	\begin{equation*}
		u=\sqrt{\omega},\quad x+1=e^{\xi}, \quad y=\int_{0}^{\psi}\frac{e^{\frac{1}{2}\xi}}{\sqrt{\omega(\xi,s)}}d s ,
	\end{equation*}
	standard differential calculus yields the velocity components:
	\begin{align*}
		&u_y=\frac{1}{2}e^{-\frac{1}{2}\xi}\omega_\psi, \quad u_{y y}=\frac{1}{2}e^{-\xi}\sqrt{\omega}\omega_{\psi\psi},\\
		&u_x=e^{-\xi} \frac{\omega_{\xi}}{2\sqrt{\omega}} - e^{-\xi} \frac{\omega_{\psi}}{4} \int_{0}^{\psi}\left(\frac{1}{\omega^\frac{1}{2}}-\frac{\omega_\xi}{\omega^{\frac{3}{2}}}\right)d s,\\
		&v=-e^{-\frac{1}{2}\xi}\frac{\psi}{2} + e^{-\frac{1}{2}\xi}\frac{\sqrt{\omega}}{2}\int_{0}^{\psi}\left(\frac{1}{\omega^\frac{1}{2}}-\frac{\omega_\xi}{\omega^{\frac{3}{2}}}\right)d s.
	\end{align*}

    Firstly, one can derive
	\begin{equation*}
		\left\| u(x,y)-\bar{u}(x,y)\right\|_{L^{\infty}_y}=\left\| \sqrt{\omega}-\sqrt{\bar{\omega}}\right\|_{L^{\infty}_\psi}=\left\| \frac{\tilde{\omega}}{\sqrt{\omega}+\sqrt{\bar{\omega}}}\right\|_{L^{\infty}_\psi}.
	\end{equation*}
	If $\psi\geq 1$, one has
	\begin{equation*}
		\left\| u(x,y)-\bar{u}(x,y)\right\|_{L^{\infty}_y} \lesssim \left\| \tilde{\omega} \right\|_{L^{\infty}_\psi} \lesssim e^{-\xi}.
	\end{equation*}
	If $\psi \leq 1$, one has
	\begin{equation*}
		\left\| u(x,y)-\bar{u}(x,y)\right\|_{L^{\infty}_y} \lesssim \left\| \tilde{\omega}_\psi \right\|_{L^{\infty}_\psi} \lesssim e^{-\xi}.
	\end{equation*}

	One also can derive
	\begin{equation*}
		\left\| u_y(x,y)-\bar{u}_y(x,y)\right\|_{L^{\infty}_y} = \frac{1}{2}e^{-\frac{1}{2}\xi} \left\| \tilde{\omega}_\psi \right\|_{L^{\infty}_\psi} \lesssim e^{-\frac{3}{2}\xi}.
	\end{equation*}

	Next, one has
	\begin{align*}
		\left\| u_{yy}(x,y)-\bar{u}_{yy}(x,y)\right\|_{L^{\infty}_y}&=\frac{1}{2}e^{-\xi}\left\| \sqrt{\omega}\omega_{\psi\psi}-\sqrt{\bar{\omega}}\bar{\omega}_{\psi\psi}\right\|_{L^{\infty}_\psi}\\
		&\lesssim e^{-\xi} \left\|\tilde{\omega}_{\xi}\right\|_{L^{\infty}_\psi} + e^{-\xi} \left\| \psi \tilde{\omega}_\psi \right\|_{L^{\infty}_\psi}\\
		&\lesssim e^{-2\xi}.
	\end{align*}

	Furthermore, we present the proof of the high regularity results. Based on the modified von Mises transformation, the assumptions of initial value $u_0(y)$ in Theorem \ref{energy1} can ensure that the following properties of $\omega_0(\psi)$ hold for $i=0,1,2,\cdots, K_0$:
	\begin{equation}
		\left\| \sqrt{A} \partial_\xi^i \tilde{\omega}_0\right\|_{L^2_\psi}^2 + \left\| \sqrt{\rho} \partial_\xi^i \partial_\psi \tilde{\omega}_0\right\|_{L^2_\psi}^2 \lesssim \kappa \ll 1.
	\end{equation}

	Define the following weighted energy norms for $i=1,2,\cdots,K_0$:
	\begin{align*}
		\|\tilde{\omega}\|_{X_0}^2&=\sup_{\xi \geq d}\left\|\sqrt{A} \tilde{\omega} e^\xi\right\|_{L_\psi^2}^2+\left\|\sqrt{\rho} \tilde{\omega}_\psi e^{\frac{3}{4} \xi}\right\|_{L_{\xi,\psi}^2}^2;\\
		\|\tilde{\omega}\|_{X_{2i-1}}^2&=\sup _{\xi \geq d} \left\|\sqrt{\rho} \partial_\xi^{i-1} \partial_\psi \tilde{\omega} e^{\frac{3}{4}\xi}\right\|_{L_\psi^2}^2+\left\|\sqrt{A} \partial_\xi^{i}\tilde{\omega} e^{\frac{3}{4}\xi}\right\|_{L_{\xi,\psi}^2}^2;\\
		\|\tilde{\omega}\|_{X_{2i}}^2&=\sup _{\xi \geq d}\left\|\sqrt{A} \partial_\xi^{i} \tilde{\omega} e^{\frac{3}{4}\xi}\right\|_{L_\psi^2}^2+\left\|\sqrt{\rho} \partial_\xi^{i} \partial_\psi \tilde{\omega} e^{\frac{3}{4}\xi}\right\|_{L_{\xi,\psi}^2}^2+\sup _{\xi \geq d}\left\|\frac{\sqrt{A} \sqrt{\bar{\omega}}}{1+\psi} \partial_\xi^{i-1} \partial_\psi^2 \tilde{\omega} e^{\frac{3}{4}\xi}\right\|_{L_\psi^2}^2;\\
		\|\tilde{\omega}\|_{X_{2K_0+1}}^2&=\sup _{\xi \geq d}\left\|\sqrt{\rho} \partial_\xi^{K_0} \partial_\psi\tilde{\omega} e^{\frac{3}{4} \xi}\right\|_{L_\psi^2}^2+\left\|\sqrt{A} \partial_\xi^{K_0+1} \tilde{\omega} e^{\frac{3}{4} \xi}\right\|_{L_{\xi,\psi}^2}^2+\left\|\frac{\sqrt{A} \sqrt{\omega}}{1+\psi} \partial_\xi^{K_0} \partial_\psi^2 \tilde{\omega} e^{\frac{3}{4}\xi}\right\|_{L_{\xi, \psi}^2}^2.
	\end{align*}	
	Construct the solution space:
	\begin{equation}
		\|\tilde{\omega}\|_{\mathbb{\tilde{X}} }=\left\{\tilde{\omega}\left|\sum_{i=0}^{2K_0+1}\|\tilde{\omega}\|_{X_i}<\infty \right.\right\}.
	\end{equation}

	Similar to the proof of the Theorem \ref{sta}, one can obtain that the existence of $\tilde{\omega}$ in $\|\tilde{\omega}\|_{\mathbb{\tilde{X}} }$. Then, similar to the proofs of Proposition \ref{sharp1}-\ref{sharp3}, one has
	\begin{align*}
		&\left\|\sqrt{A} \partial_\xi^{i} \tilde{\omega} \right\|_{L_\psi^2} \lesssim e^{-\xi},\quad i=0,1,2,\cdots,K_0,\\
		&\left\|\sqrt{\rho} \partial_\xi^{i} \partial_\psi \tilde{\omega}\right\|_{L_\psi^2}\lesssim e^{-\xi},\quad i=0,1,2,\cdots,K_0,\\
		&\left\|\frac{\sqrt{A} \sqrt{\bar{\omega}}}{1+\psi} \partial_\xi^{i} \partial_\psi^2 \tilde{\omega}\right\|_{L_\psi^2}\lesssim e^{-\xi},\quad i=0,1,2,\cdots,K_0-1,\\
		&\left\|\frac{\sqrt{A}}{1+\psi} \bar{\omega}^{\frac{2j-3}{2}} \partial_\xi^{i} \partial_\psi^j \tilde{\omega}\right\|_{L_\psi^2}\lesssim e^{-\xi},\quad j\geq 3, \quad 2i+j\leq 2K_0.
	\end{align*}
	
	Leveraging our established $L^2$ estimates and equation \eqref{error}, one rigorously derives these $L^\infty$ decay bounds:
	\begin{align*}
		&\left\|\partial_\xi^{i} \tilde{\omega} \right\|_{L_\psi^\infty} \lesssim e^{-\xi}, \quad i=0,1,2,\cdots,K_0,\\
		&\left\|\partial_\xi^{i} \partial_\psi \tilde{\omega}\right\|_{L_\psi^\infty} \lesssim e^{-\xi},\quad i=0,1,2,\cdots,K_0-1,\\
		&\left\|\bar{\omega}^{\frac{2j-3}{2}} \partial_\xi^{i} \partial_\psi^j \tilde{\omega}\right\|_{L_\psi^\infty} \lesssim e^{-\xi}, \quad j\geq 2, \quad 2i+j\leq 2K_0.
	\end{align*}
	
	Finally, through inverse von Mises transformation, one establishes the main regularity result:
	\begin{equation}
		\left\|\partial_x^i \partial_y^j (u-\bar{u}) (x,y)\right\|_{L_y^\infty} \lesssim (x+1)^{-1-i-\frac{j}{2}},\quad 2i+j\leq 2K_0-1.
	\end{equation}
\end{proof}

\section{Proof of Theorem \ref{main1}}

Let $\tilde{\omega}=\omega-\bar{\omega}$, where
\begin{equation}
	\bar{\omega}\left(\int_{0}^{y}\frac{u(x,y')}{\sqrt{x+d}}dy'\right)=\left[f'\left(\frac{y}{\sqrt{2(x+d)}}\right)\right]^2.
\end{equation}
One can obtain
\begin{equation}\label{L1}
	\mathcal{L} _1 (\tilde{\omega})=\tilde{\omega}_\xi-\frac{1}{2}\psi\tilde{\omega}_\psi-\sqrt{\omega}\tilde{\omega}_{\psi\psi}-\frac{\bar{\omega}_{\psi\psi}}{\sqrt{\bar{\omega}}+\sqrt{\omega}}\tilde{\omega}=0,
\end{equation}
with boundary conditions
\begin{equation*}
	\tilde{\omega}(\xi,0)=\tilde{\omega}(\xi,\infty)=0, \quad \tilde{\omega}(0,\psi)=\omega_0(\psi)-\bar{\omega}(\psi).
\end{equation*}

We first prove a low-decay estimate of $\omega$.
\begin{lemma}\label{w1}
	Under the hypotheses of Theorem \ref{main1}, there exists small positive constants $\alpha$ and $\mu$ such that the following inequality holds:
	\begin{equation}
		|\omega(\xi,\psi)-\bar{\omega}(\psi)|\lesssim_\varepsilon e^{-\alpha\xi}e^{-\mu \psi^2}\bar{\omega}.
	\end{equation}
\end{lemma}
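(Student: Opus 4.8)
The plan is to prove the bound by the maximum principle applied to the linear equation \eqref{L1}, using the explicit barrier $\Phi(\xi,\psi)=K\,e^{-\alpha\xi}\,\bar{\omega}(\psi)\,e^{-\mu\psi^2}$ to dominate $\pm\tilde{\omega}=\pm(\omega-\bar{\omega})$ throughout $\Omega$. Observe first that, with the solution $\omega$ frozen in its coefficients, $\mathcal{L}_1$ is a linear parabolic operator whose zeroth-order coefficient $\mathcal{A}=-\bar{\omega}_{\psi\psi}/(\sqrt{\bar{\omega}}+\sqrt{\omega})$ is nonnegative everywhere (Lemma \ref{barw}) and satisfies $\mathcal{A}\ge\lambda>0$ on $\{0\le\psi\le\psi_0\}$ (Lemma \ref{lemma2}); both facts are exploited below.

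The core of the proof is to show that $\Phi$ is a supersolution, $\mathcal{L}_1(\Phi)\ge 0$, for suitably small $\alpha,\mu>0$. Denote by $\mathcal{L}_1^{(0)}$ the $\xi$-independent part of $\mathcal{L}_1$. Using the Blasius relation $\sqrt{\bar{\omega}}\,\bar{\omega}_{\psi\psi}=-\tfrac12\psi\bar{\omega}_\psi$, a short computation gives the clean identity
\[
\mathcal{L}_1^{(0)}(\bar{\omega})=-\tfrac12\psi\bar{\omega}_\psi-\sqrt{\omega}\,\bar{\omega}_{\psi\psi}+\mathcal{A}\,\bar{\omega}=\frac{\psi\,\bar{\omega}_\psi\,\omega}{2\sqrt{\bar{\omega}}\,(\sqrt{\bar{\omega}}+\sqrt{\omega})}\ \ge\ 0,
\]
so $\bar{\omega}$ is itself a stationary supersolution. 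Expanding $\mathcal{L}_1^{(0)}(\bar{\omega}\,m)$ with $m=e^{-\mu\psi^2}$ via the product rule yields
\[
\mathcal{L}_1^{(0)}(\bar{\omega}\,m)=m\left[\mathcal{L}_1^{(0)}(\bar{\omega})+\mu\psi^2\bigl(1-4\mu\sqrt{\omega}\bigr)\bar{\omega}+2\mu\sqrt{\omega}\,\bar{\omega}+4\mu\psi\sqrt{\omega}\,\bar{\omega}_\psi\right].
\]
If $\mu\le\tfrac1{4\sqrt{k_2}}$, then $1-4\mu\sqrt{\omega}\ge 0$ by Lemma \ref{lemma1}, so every term in the bracket is nonnegative. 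A quantitative positive lower bound $\mathcal{L}_1^{(0)}(\bar{\omega}\,m)\ge\alpha_0\,\bar{\omega}\,m$ then follows by splitting into two regimes: for $\psi\le\psi_1$ the ratio $\mathcal{L}_1^{(0)}(\bar{\omega})/\bar{\omega}$ extends continuously to $\psi=0$ with a strictly positive limit and hence is bounded below there; for $\psi\ge\psi_1$ the term $2\mu\sqrt{\omega}\,\bar{\omega}\ge 2\mu\sqrt{k_1\bar{\omega}(\psi_1)}\,\bar{\omega}$ supplies the bound. Consequently $\mathcal{L}_1(\Phi)=K e^{-\alpha\xi}\bigl(\mathcal{L}_1^{(0)}(\bar{\omega}\,m)-\alpha\,\bar{\omega}\,m\bigr)\ge0$ once $\alpha\le\alpha_0=\alpha_0(\mu,k_1,k_2)$.

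It then remains to match the parabolic boundary data and run the comparison. On $\{\psi=0\}$ both $\tilde{\omega}$ and $\Phi$ vanish, and both tend to $0$ as $\psi\to\infty$; on $\{\xi=\ln d\}$ one needs $|\tilde{\omega}_0|=|\omega_0-\bar{\omega}|\le K\,\bar{\omega}\,e^{-\mu\psi^2}$, which holds because near $\psi=0$ both $\omega_0=u_0^2$ and $\bar{\omega}$ vanish linearly in $\psi$ (using $u_0'(0)>0$ and $\bar{\omega}_\psi(0)>0$), so $|\tilde{\omega}_0|\lesssim\psi\lesssim\bar{\omega}$, whereas for large $\psi$ the hypothesis $|u_0-1|\lesssim_\varepsilon e^{-\varepsilon y^2}$, the faster Gaussian decay of $1-f'$ from Lemma \ref{f}, and the bound $\psi\lesssim y$ give $|\tilde{\omega}_0|\lesssim_\varepsilon e^{-c\varepsilon\psi^2}$, so $\mu\le c\varepsilon$ suffices (on a bounded $\psi$-range $\bar{\omega}$ is bounded below and the estimate is trivial). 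Taking $K=K(\varepsilon)$ large enough, the comparison principle applied to $\Phi+\delta'\mp\tilde{\omega}$ on truncated strips $\{\ln d<\xi<T,\ 0<\psi<R\}$ — the constant $\delta'>0$ being admissible since $\mathcal{A}\ge0$, and absorbing the vanishing residual of $\tilde{\omega}$ at $\psi=R$ (which tends to $0$ uniformly for $\xi\in[\ln d,T]$ since $\omega\to1$) — and then sending $R,T\to\infty$, $\delta'\to0$, yields $|\tilde{\omega}|\le\Phi$, i.e. the claim with $\mu\le\min\{(4\sqrt{k_2})^{-1},c\varepsilon\}$ and $\alpha\le\alpha_0$.

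I expect the supersolution verification in the second paragraph to be the main obstacle: one must play the zeroth-order term $\mathcal{A}$ (bounded below only near $\psi=0$, where $\bar{\omega}_\psi$ is non-degenerate) against the decay produced by the Gaussian factor, the latter being effective precisely where $\bar{\omega}$ is bounded away from zero, all while keeping $\mu$ small enough that the sign-indefinite contribution $\mu\psi^2(1-4\mu\sqrt{\omega})\bar{\omega}$ stays nonnegative; securing a single positive constant $\alpha_0$ uniform over $\psi\in(0,\infty)$ is exactly what ties $\alpha$ and $\mu$ to $\varepsilon$. The far-field treatment of the maximum principle on the unbounded strip is a secondary, essentially routine point handled by the $\delta'$-device together with the uniform-on-compacts decay of $\tilde{\omega}$ as $\psi\to\infty$.
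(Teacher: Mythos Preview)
Your proposal is correct and follows essentially the same approach as the paper: the barrier $\Phi=K e^{-\alpha\xi}\bar\omega\,e^{-\mu\psi^2}$ is exactly the paper's $\varphi_1$, and your supersolution verification---splitting into a near-boundary regime where the zeroth-order term $\mathcal{A}\omega$ (equivalently your identity $\mathcal{L}_1^{(0)}(\bar\omega)=\psi\bar\omega_\psi\omega/\bigl(2\sqrt{\bar\omega}(\sqrt{\bar\omega}+\sqrt{\omega})\bigr)$) dominates, and a far-field regime where $2\mu\sqrt{\omega}\,\bar\omega$ dominates---matches the paper's partition at $\psi_0$. You are in fact slightly more careful than the paper in justifying the initial-data comparison and the unbounded-domain maximum principle via truncation and the $\delta'$-shift.
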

\begin{proof}
    Define the comparison function 
    \begin{equation}
         \varphi_1:= M_1 e^{-\alpha\xi}e^{-\mu \psi^2}\bar{\omega}
    \end{equation}
    and consider the barrier function $s_1 := \varphi_1 \pm \tilde{\omega}$. The boundary conditions satisfy:
    \begin{equation*}
          s_1(\xi,0) = s_1(\xi,\infty) = 0.
    \end{equation*}
    Through the modified von Mises transformation:
    \begin{equation*}
        \psi(x,y) = \int_0^y \frac{u(x,y')}{\sqrt{x+d}} dy',
    \end{equation*}
    one establishes the asymptotic correspondence $\psi \sim y/\sqrt{x+d}$ as $y\to\infty$. The initial positivity follows from assumptions \eqref{suppose1} by selecting sufficiently large $M_1(\varepsilon)$ and $d(\varepsilon)$. 
	
	Computing the operator action on $\varphi_1$:
	\begin{align*}
		\mathcal{L} _1(\varphi_1)=&-\alpha M_1 e^{-\alpha\xi}e^{-\mu \psi^2}\bar{\omega}+M_1 e^{-\alpha\xi}e^{-\mu \psi^2}\left(-\frac{1}{2}\psi\bar{\omega}_\psi-\sqrt{\omega}\bar{\omega}_{\psi\psi}-\frac{\bar{\omega}_{\psi\psi}}{\sqrt{\bar{\omega}}+\sqrt{\omega}}\bar{\omega}\right)\\
		&+M_1 e^{-\alpha\xi}e^{-\mu \psi^2}\bar{\omega}\left(\mu\psi^2+2\mu\sqrt{\omega}-4\sqrt{\omega}\mu^2\psi^2\right)+M_1 e^{-\alpha\xi}e^{-\mu \psi^2}\left(4\mu\psi\sqrt{\omega}\right) \bar{\omega}_\psi\\
		\geq &M_1 e^{-\alpha\xi}e^{-\mu \psi^2}\left(-\alpha\bar{\omega}-\frac{\bar{\omega}_{\psi\psi}}{\sqrt{\bar{\omega}}+\sqrt{\omega}}\omega+2\mu\sqrt{\omega}\bar{\omega}\right).
	\end{align*}
	
	Through Lemma \ref{lemma2}, one partitions the domain at $\psi_0>0$ where:
    \begin{equation*}
        \begin{cases}
            -\tfrac{\bar{\omega}_{\psi\psi}}{\sqrt{\bar{\omega}}+\sqrt{\omega}}\omega \geq \alpha\bar{\omega}, & 0\leq\psi\leq\psi_0 \\
            2\mu\sqrt{\omega}\bar{\omega} \geq \alpha\bar{\omega}, & \psi\geq\psi_0
        \end{cases}
    \end{equation*}
    yielding $\mathcal{L}_1(\varphi_1) \geq 0$. Consequently:
	\begin{equation*}
		\mathcal{L} _1(s_1)=\mathcal{L} _1(\varphi_1)\pm \mathcal{L} _1(\tilde{\omega})\geq 0.
	\end{equation*}

    Application of the maximum principle to the operator $\mathcal{L}_1$ yields $s_1\geq0$ throughout the domain, establishing:
	\begin{equation*}
		|\omega(\xi,\psi)-\bar{\omega}(\psi)|\leq M_1(\varepsilon) e^{-\alpha\xi}e^{-\mu \psi^2}\bar{\omega}.
	\end{equation*}
\end{proof}

By utilizing the decay estimate in Lemma \ref{w1}, we can obtain the following sharp decay estimate of $\omega(\xi,\psi)$.
\begin{proposition}\label{wsd}
	Under the hypotheses of Theorem \ref{main1}, there exists a positive constant $\mu$ such that the following inequality holds :
	\begin{equation}\label{wsharp}
		|\omega(\xi,\psi)-\bar{\omega}(\psi)|\lesssim_\varepsilon e^{-\xi} \psi e^{-\mu \psi^2}.
	\end{equation}
\end{proposition}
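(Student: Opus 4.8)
\noindent\emph{Proof plan.} The argument is a comparison principle for the degenerate parabolic operator $\mathcal{L}_1$ of \eqref{L1}, whose zeroth-order coefficient $-\frac{\bar\omega_{\psi\psi}}{\sqrt{\bar\omega}+\sqrt{\omega}}$ is nonnegative by Lemma \ref{barw}. It suffices to construct a comparison function $\Phi_{\mathrm{c}}(\xi,\psi)\ge 0$ with $\mathcal{L}_1(\Phi_{\mathrm{c}})\ge 0$ in $\Omega$, $\Phi_{\mathrm{c}}(\xi,0)=\Phi_{\mathrm{c}}(\xi,\infty)=0$, $\Phi_{\mathrm{c}}(\ln d,\cdot)\ge|\tilde\omega(\ln d,\cdot)|$, and $\Phi_{\mathrm{c}}\lesssim_\varepsilon e^{-\xi}\psi e^{-\mu\psi^{2}}$; since $\mathcal{L}_1(\tilde\omega)=0$, the maximum principle applied to $\Phi_{\mathrm{c}}\pm\tilde\omega$ then yields $|\tilde\omega|\le\Phi_{\mathrm{c}}$, which is \eqref{wsharp}.

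Two structural facts fix the shape of $\Phi_{\mathrm{c}}$. First, the exponent $e^{-\xi}\sim(x+1)^{-1}$ is the principal eigenvalue: $\mathcal{L}$ has eigenvalue $1$ with eigenfunction $\psi\bar\omega_\psi$ (Proposition \ref{eigenvalue}), so $e^{-\xi}\psi\bar\omega_\psi\in\ker(\partial_\xi+\mathcal{L})$; writing $\mathcal{L}_1-(\partial_\xi+\mathcal{L})=(\sqrt{\bar\omega}-\sqrt{\omega})\partial_\psi^{2}+\big(\tfrac{\bar\omega_{\psi\psi}}{2\sqrt{\bar\omega}}-\tfrac{\bar\omega_{\psi\psi}}{\sqrt{\bar\omega}+\sqrt{\omega}}\big)$ and using $\sqrt{\bar\omega}-\sqrt{\omega}=-\tilde\omega/(\sqrt{\bar\omega}+\sqrt{\omega})$, one computes
\[
\mathcal{L}_1\big(e^{-\xi}\psi\bar\omega_\psi\big)=\frac{-\,\tilde\omega\,e^{-\xi}}{\sqrt{\bar\omega}+\sqrt{\omega}}\Big[(\psi\bar\omega_\psi)_{\psi\psi}-\frac{\bar\omega_{\psi\psi}\,\psi\bar\omega_\psi}{2\sqrt{\bar\omega}\,(\sqrt{\bar\omega}+\sqrt{\omega})}\Big],
\]
which by Lemma \ref{w1} is $\lesssim_\varepsilon e^{-(1+\alpha)\xi}$ times a weight vanishing at $\psi=0,\infty$, i.e.\ strictly lower order in $\xi$ than the principal term. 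Second, the weight must be the slow Gaussian $\psi e^{-\mu\psi^{2}}$ with $0<\mu\ll 1$ (not $\psi\bar\omega_\psi\sim\psi e^{-\psi^{2}/4}$, by Lemmas \ref{f}--\ref{barw}), because by \eqref{suppose1} and $\psi\sim y/\sqrt{d}$ the initial data only satisfies $|\tilde\omega(\ln d,\psi)|\lesssim_\varepsilon\psi e^{-\mu\psi^{2}}$ once $\mu<\varepsilon d$, and $\Phi_{\mathrm{c}}$ must dominate it as $\psi\to\infty$.

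Accordingly I would take $\Phi_{\mathrm{c}}=M e^{-\xi}\chi(\psi)$ with $M=M(\varepsilon)$ large and $\chi>0$ interpolating between $\psi\bar\omega_\psi$ near $\psi=0$ and $\psi e^{-\mu\psi^{2}}$ for $\psi$ large, $\chi\lesssim\psi e^{-\mu\psi^{2}}$ throughout, plus, if needed, a correction decaying strictly faster than $e^{-\xi}$ on the transition zone. The supersolution property is then checked region by region. Near the degenerate boundary the marginal identity above is upgraded to a strict inequality using $\sqrt{\omega}\sim\sqrt{\omega_\psi(\xi,0)\,\psi}$: a subdominant term $-\delta\psi^{3/2}$ in $\chi$ has $\partial_\psi^{2}(\psi^{3/2})\sim\psi^{-1/2}$, so $-\sqrt{\omega}\,\partial_\psi^{2}$ produces a strictly positive $O(1)$ amount that beats both the $O(\psi)$ remainders near $\psi=0$ and the $\tilde\omega e^{-\xi}$ remainder (using Lemma \ref{w1} and taking $d(\varepsilon)$ large). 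For $\psi$ large the Gaussian makes $-\chi-\tfrac12\psi\chi_\psi-\sqrt{\omega}\chi_{\psi\psi}$ equal to $(\mu-4\mu^{2}\sqrt{\omega})\psi^{3}e^{-\mu\psi^{2}}$ up to lower order, hence positive for $\mu$ small. On the intermediate compact range, where $-\frac{\bar\omega_{\psi\psi}}{\sqrt{\bar\omega}+\sqrt{\omega}}\ge\lambda'>0$ by Lemma \ref{lemma2}, one closes the estimate using this strict positivity, the shape of $\chi$, and again the faster-decaying correction controlled by Lemma \ref{w1}. The boundary values, the initial domination (via Lemma \ref{w1}), and the size bound $\Phi_{\mathrm{c}}\lesssim_\varepsilon e^{-\xi}\psi e^{-\mu\psi^{2}}$ (which needs the corrections to be genuinely subdominant) are then routine.

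The crux — and where essentially all the work lies — is exactly this construction. A strict supersolution of $\mathcal{L}_1$ decaying at the critical rate $e^{-\xi}$ cannot equal the eigenfunction (there one has only equality at the linear level), nor any $\bar\omega$-like profile near $\psi=0$ (those support at best the slower rate $\sup_{\psi\le\psi_0}\big(-\tfrac{\bar\omega_{\psi\psi}}{2\sqrt{\bar\omega}}\big)<1$), so one is forced to glue the eigenfunction at the corner $\psi=0$ to the slow Gaussian far field and to verify the supersolution property across the whole transition — absorbing the nonlinear ($\omega\neq\bar\omega$) corrections in $\mathcal{L}_1$ by means of the preliminary decay of Lemma \ref{w1} — with the degenerate boundary and the intermediate range being the two delicate spots.
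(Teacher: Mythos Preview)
Your framework is right --- maximum principle for $\mathcal{L}_1$, comparison function built from the principal eigenfunction $\psi\bar\omega_\psi$ at rate $e^{-\xi}$, nonlinear remainder of size $O(e^{-\alpha\xi})$ via Lemma~\ref{w1} --- and you correctly isolate the crux: at the critical rate the pure eigenfunction gives only equality at the linear level. But your proposed fix (spatial interpolation between $\psi\bar\omega_\psi$ and a slow Gaussian, a $-\delta\psi^{3/2}$ near-boundary correction, region-by-region verification) misses the paper's much simpler device. The paper keeps $\psi\bar\omega_\psi$ as the spatial profile throughout and absorbs the nonlinear error through a \emph{$\xi$-dependent multiplicative factor}: with
\[
\varphi_2=M_2\,e^{B}\,e^{-\xi}\,\psi\bar\omega_\psi\,e^{-B\exp(-\delta\xi)},
\]
the derivative $\partial_\xi$ produces an extra $+\delta B\,e^{-\delta\xi}\varphi_2$, so that $\mathcal{L}_1(\varphi_2)=\varphi_2\big(I_1+I_2+\delta B\,e^{-\delta\xi}\big)$ with $|I_1|+|I_2|\lesssim e^{-\alpha\xi}$ (your own computation). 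Choosing $\delta\le\alpha$ and $\delta B$ large yields $\mathcal{L}_1(\varphi_2)\ge0$ globally --- no regions, no interpolation, no fractional-power correction. Your passing remark about ``a correction decaying strictly faster than $e^{-\xi}$'' gestures toward this but does not identify the form or its role.

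Two concrete problems with your plan as written. First, the claim that the weight ``must'' be a slow Gaussian with $\mu\ll1$ is mistaken: $d$ is a free parameter, and taking $d>1/(4\varepsilon)$ makes the initial data decay like $e^{-\varepsilon d\psi^2}$ with exponent exceeding $1/4$, so $\psi\bar\omega_\psi\sim\psi e^{-\psi^2/4}$ already dominates $|\tilde\omega(\ln d,\cdot)|$ and no slow tail is needed. Second, your intermediate-range argument relies on $-\frac{\bar\omega_{\psi\psi}}{\sqrt{\bar\omega}+\sqrt\omega}\ge\lambda'>0$ from Lemma~\ref{lemma2}, but $\lambda'$ there is of order $1/4$, while the $e^{-\xi}$ rate contributes $-1$ to the zeroth-order balance; the strict positivity of $\mathcal{A}$ alone cannot close the supersolution inequality on that range, and a $\psi$-only modification of $\chi$ cannot repair this without spoiling another region. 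The $\xi$-factor above resolves both issues at once.
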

\begin{proof}
    Define the enhanced comparison function
    \begin{equation}
        \varphi_2(\xi,\psi) := M_2 e^B e^{- \xi} \psi \bar{\omega}_{\psi} e^{-B \exp(-\delta \xi)}
    \end{equation}
    with parameters $\delta$ and $B$ to be determined. Consider the barrier function $s_2 := \varphi_2 \pm \tilde{\omega}$ satisfying:
    \begin{equation*}
        s_2(\xi,0) = s_2(\xi,\infty) = 0, \quad s_2(\ln d,\psi) \geq 0,
    \end{equation*}
    for sufficiently large $M_2(\varepsilon)$ and $d(\varepsilon)$. 

	Computing the operator action on $\varphi_2$:
	\begin{align*}
		\mathcal{L} _1(\varphi_2)=&-\varphi_2+\delta B e^{-\delta \xi}\varphi_2+ M_2 e^B e^{- \xi} e^{-B \exp(-\delta \xi)} \mathcal{L} _1 (\psi \bar{\omega}_{\psi})\\
		=& \varphi_2 \left(\frac{\bar{\omega}_{\psi\psi}\tilde{\omega}}{2\sqrt{\bar{\omega}}(\sqrt{\omega}+\sqrt{\bar{\omega}})^2}-\frac{\tilde{\omega}}{\sqrt{\omega}+\sqrt{\bar{\omega}}}\frac{\psi \bar{\omega}_{\psi\psi\psi}+2\bar{\omega}_{\psi\psi}}{\psi \bar{\omega}_{\psi}}+\delta B e^{-\delta \xi}\right)\\
		=& \varphi_2 \left(I_1+I_2+\delta B e^{-\delta \xi}\right),
	\end{align*}
	where 
	\begin{align*}
		\mathcal{L} _1 (\psi \bar{\omega}_{\psi}) = & -\frac{1}{2}\psi (\psi\bar{\omega}_{\psi\psi}+\bar{\omega}_{\psi})-\sqrt{\omega}(\psi\bar{\omega}_{\psi\psi\psi}+2\bar{\omega}_{\psi\psi})-\frac{\bar{\omega}_{\psi\psi}}{\sqrt{\bar{\omega}}+\sqrt{\omega}} \psi \bar{\omega}_{\psi}\\
		=& (\sqrt{\bar{\omega}}-\sqrt{\omega}) (\psi\bar{\omega}_{\psi\psi\psi}+2\bar{\omega}_{\psi\psi})+ \psi \bar{\omega}_{\psi}+\left(\frac{1}{2\sqrt{\bar{\omega}}}-\frac{1}{\sqrt{\bar{\omega}}+\sqrt{\omega}}\right) \bar{\omega}_{\psi\psi}\psi \bar{\omega}_{\psi}\\
		=&\psi \bar{\omega}_{\psi}+\frac{\bar{\omega}_{\psi\psi}\tilde{\omega}}{2\sqrt{\bar{\omega}}(\sqrt{\omega}+\sqrt{\bar{\omega}})^2}\psi \bar{\omega}_{\psi}-\frac{\tilde{\omega}}{\sqrt{\omega}+\sqrt{\bar{\omega}}}\left(\psi \bar{\omega}_{\psi\psi\psi}+2\bar{\omega}_{\psi\psi}\right).
	\end{align*} 

	By Lemma \ref{barw}, Lemma \ref{lemma1}, Lemma \ref{lemma2} and Lemma \ref{w1}, one can derive
	\begin{align*}
		&|I_1| \lesssim \left|\frac{\bar{\omega} \bar{\omega}_{\psi\psi}}{2\sqrt{\bar{\omega}}(\sqrt{\omega}+\sqrt{\bar{\omega}})^2} \right| \left|\frac{\tilde{\omega}}{\bar{\omega}} \right| \lesssim e^{-\alpha\xi},\\
		&|I_2| \lesssim \left| \frac{\psi \bar{\omega}_{\psi\psi\psi}+2\bar{\omega}_{\psi\psi}}{\psi \bar{\omega}_{\psi}} \frac{\bar{\omega}}{\sqrt{\omega}+\sqrt{\bar{\omega}}}\right| \left|\frac{\tilde{\omega}}{\bar{\omega}}\right|\lesssim  e^{-\alpha\xi}.
	\end{align*}
    
    Select $\delta \in (0,\alpha]$ and $B>0$ satisfying $\delta B$ sufficiently large to ensure:
	\begin{equation*}
		\mathcal{L} _1(\varphi_2) \geq \varphi_2 \left(I_1+I_2+\delta B e^{-\delta \xi}\right) \geq 0.
	\end{equation*}
	Consequently,
	\begin{equation*}
		\mathcal{L} _1(s_2)=\mathcal{L} _1(\varphi_2)\pm \mathcal{L} _1(\tilde{\omega})\geq 0.
	\end{equation*}

	Applying the maximum principle to $\mathcal{L}_1$ yields $s_2 \geq 0$, thus
	\begin{equation*}
		|\omega(\xi,\psi)-\bar{\omega}(\psi)|\leq M_2(\varepsilon) e^B e^{- \xi} \psi \bar{\omega}_{\psi} e^{-B \exp(-\delta \xi)} \leq C(\varepsilon) e^{-\xi} \psi e^{-\mu \psi^2}
	\end{equation*}
	by taking suitable $C$ and $\mu$.
\end{proof}

Next, we demonstrate our main result, Theorem \ref{main1}.
\begin{proof}[Proof of Theorem \ref{main1}]
	Through the inverse von Mises transformation and Theorem \ref{wsd}, one can obtain the velocity field estimate for suitable positive constants $\mu(\varepsilon)$ and $d(\varepsilon)$:
	\begin{align*}
		\left\|u(x,y)-\bar{u}(x,y)\right\|_{L_y^\infty}& \leq \left\|u(x,y)-\bar{u}^d(x,y)\right\|_{L_y^\infty} +\left\|\bar{u}^d(x,y)-\bar{u}(x,y)\right\|_{L_y^\infty}\\
		& \lesssim \left\|\frac{\omega-\bar{\omega}}{\sqrt{\omega}+\sqrt{\bar{\omega}}}\right\|_{L_\psi^\infty} + (x+1)^{-1}\\
		& \lesssim_\varepsilon e^{-\xi} + (x+1)^{-1} \\
		& \lesssim_\varepsilon (x+1)^{-1}.
	\end{align*}
	where 
	\begin{equation}
		\bar{u}^d (x,y)=f'\left(\frac{y}{\sqrt{2(x+d)}}\right).
	\end{equation}
\end{proof}

\section{Proof of Theorem \ref{main2}}

\subsection{Sharp Decay Estimates of $\omega_\xi$, $\omega_\psi$ and $\omega_{\psi\psi}$}
\ 
\newline
\indent 
Define the derivative quantity $r := \omega_\xi$ satisfying the linear operator equation:
\begin{equation}\label{l2}
	\mathcal{L} _2 (r)= r_\xi-\frac{1}{2}\psi r_\psi-\sqrt{\omega} r_{\psi\psi}-\frac{\omega_{\psi\psi}}{2\sqrt{\omega}} r=0,
\end{equation}
with boundary conditions
\begin{equation*}
	 r (\xi,0)= r(\xi,\infty)=0. 
\end{equation*}

Similar to $\omega$, we first present the results of low-decay estimate.
\begin{lemma}\label{wx1}
	Under the hypotheses of Theorem \ref{main2}, there exists positive constants $\mu$ and $\alpha$ such that:
	\begin{equation}
		\begin{gathered}
			\left|\omega_\xi (\xi,\psi)\right| \lesssim_\varepsilon e^{-\alpha\xi} \psi e^{-\mu \psi^2},\\
			\left|\omega_{\psi}-\bar{\omega}_\psi\right| \lesssim_\varepsilon e^{-\alpha\xi} e^{-\mu \psi^2},\\
			\left|\sqrt{\bar{\omega}}(\omega_{\psi\psi}-\bar{\omega}_{\psi\psi})\right| \lesssim_\varepsilon e^{-\alpha\xi} \psi e^{-\mu \psi^2}.
		\end{gathered}
	\end{equation}
\end{lemma}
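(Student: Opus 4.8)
The strategy is to repeat, now at the level of the first derivatives of $\omega$, the comparison‑function argument used for $\omega-\bar\omega$ in Lemma~\ref{w1} and Proposition~\ref{wsd}. I would treat the three quantities in the order $\omega_\xi$, then $\omega_\psi-\bar\omega_\psi$, and finally $\sqrt{\bar\omega}(\omega_{\psi\psi}-\bar\omega_{\psi\psi})$, the last being deduced essentially algebraically from the first two. In each case one writes the linear parabolic equation satisfied by the quantity, chooses a barrier of the form $M_\varepsilon\,e^{-\alpha\xi}\times(\text{Blasius‑profile factor})\times e^{-\mu\psi^2}$ with $\alpha,\mu$ small and $M_\varepsilon,d(\varepsilon)$ large, and applies the maximum principle for the relevant operator; as in Lemma~\ref{w1}, the smallness of $e^{-\alpha\xi}$ at $\xi=\ln d$ (i.e.\ $d$ large) absorbs the $O(1)$ initial data, while $e^{-\mu\psi^2}$ with $\mu$ small stays below the localisation \eqref{suppose2}.

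\textbf{Step 1: bound on $\omega_\xi$.} Here $r:=\omega_\xi$ solves $\mathcal{L}_2(r)=0$ of \eqref{l2} with $r(\xi,0)=r(\xi,\infty)=0$, the vanishing at $\psi=0$ following from the refined bound $|\omega_\xi|\lesssim\psi$ of the concavity lemma of Section~2 (applicable since $\partial_y^2u_0\le0$). The decisive structural fact is that the zeroth‑order coefficient $-\frac{\omega_{\psi\psi}}{2\sqrt\omega}$ of $\mathcal{L}_2$ is \emph{nonnegative} — this is exactly the concavity preservation $\sqrt\omega\,\omega_{\psi\psi}\le0$ — and, arguing as in Lemma~\ref{lemma2}, is bounded below by a positive constant on each strip $0\le\psi\le\psi_0$. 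I would take $\varphi_1:=M_3\,e^{-\alpha\xi}\psi\bar\omega_\psi\,e^{-\mu\psi^2}$; the factor $\psi\bar\omega_\psi$ is the principal eigenfunction of $\mathcal{L}$ and is the correct profile, as the explicit shifted Blasius solution $\bar u^{\,s}$ of Example~\ref{example} shows ($\omega_\xi\approx-\tfrac{d-s}{2}e^{-\xi}\psi\bar\omega_\psi$). One checks $\mathcal{L}_2(\varphi_1)\ge0$: near $\psi=0$ the good‑sign zeroth‑order term together with the eigenrelation $\mathcal{L}(\psi\bar\omega_\psi)=\psi\bar\omega_\psi$ (the lower‑order difference $\mathcal{L}_2-\mathcal{L}$ being absorbed via Lemma~\ref{lemma1} and Lemma~\ref{w1}) dominate $\varphi_{1,\xi}=-\alpha\varphi_1$, while for $\psi$ away from the origin the transport term $-\tfrac12\psi\,\partial_\psi$ produces a gain $\sim\mu\psi^2\varphi_1$. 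The initial inequality $\varphi_1(\ln d,\cdot)\ge|\omega_\xi(\ln d,\cdot)|$ follows from $\omega_{0\xi}=\tfrac12\psi\omega_{0\psi}+\sqrt{\omega_0}\,\omega_{0\psi\psi}$, \eqref{suppose2}, the concavity of $u_0$, the compatibility \eqref{compatibility}, and $d$ large. The maximum principle gives $|\omega_\xi|\lesssim_\varepsilon e^{-\alpha\xi}\psi e^{-\mu\psi^2}$.

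\textbf{Step 2: bound on $\omega_\psi-\bar\omega_\psi$.} Differentiating the error equation \eqref{L1} in $\psi$ yields, for $q:=\omega_\psi-\bar\omega_\psi=\tilde\omega_\psi$, a linear equation of the schematic form
\[
 q_\xi-\sqrt\omega\,q_{\psi\psi}-\Big(\tfrac12\psi+\tfrac{\omega_\psi}{2\sqrt\omega}\Big)q_\psi-\Big(\tfrac12-\mathcal{A}\Big)q=G,
\]
with $\mathcal{A}$ the operator of Lemma~\ref{lemma2} and $G$ linear in $\tilde\omega$ with coefficients built from $\bar\omega_{\psi\psi},\bar\omega_{\psi\psi\psi}$, so that $|G|\lesssim_\varepsilon e^{-\xi}e^{-\mu'\psi^2}$ by Proposition~\ref{wsd} (uniformly, the apparent singularities at $\psi=0$ being killed by $\tilde\omega(\xi,0)=0$). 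Two points need attention. First, the boundary value at $\psi=0$: since $\tilde\omega(\xi,0)=0$ and $\bar\omega(\psi)=\sqrt2\,b_0\psi+O(\psi^{5/2})$, dividing the bound $|\tilde\omega|\lesssim_\varepsilon e^{-\xi}\psi e^{-\mu\psi^2}$ of Proposition~\ref{wsd} by $\psi$ and letting $\psi\to0$ gives the skin‑friction estimate $|q(\xi,0)|=|\omega_\psi(\xi,0)-\sqrt2\,b_0|\lesssim_\varepsilon e^{-\xi}$. Second, and this is the main obstacle, the zeroth‑order coefficient $-(\tfrac12-\mathcal{A})$ of the $q$‑equation is \emph{negative and bounded away from $0$} (indeed $\mathcal{A}\le\tfrac{-f'''}{(f')^2}\le\tfrac12$, with strict inequality for $\psi>0$), so a purely Gaussian barrier fails. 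The remedy is to take $\varphi_2:=M_4\,e^{-\alpha\xi}\bar\omega_\psi\,e^{-\mu\psi^2}$ and exploit the identity — valid because $\bar\omega$ solves \eqref{bar} — that the spatial part of the $q$‑operator applied to $\bar\omega_\psi$ equals $-\frac{\omega_\psi}{2\sqrt\omega}\bar\omega_{\psi\psi}+O_\varepsilon(|\tilde\omega|)$; the leading term is \emph{nonnegative} since $\bar\omega_{\psi\psi}\le0$ (Lemma~\ref{barw}) and $\omega_\psi>0$, and is bounded below by a positive constant on bounded $\psi$‑intervals, thereby replacing the bad term $-(\tfrac12-\mathcal{A})q$. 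The Gaussian then contributes the usual positive transport/diffusion gains $\mu(\psi^2+2\sqrt\omega)\,\bar\omega_\psi e^{-\mu\psi^2}$, which beat $-\alpha\bar\omega_\psi e^{-\mu\psi^2}$ and $|G|$ once $\alpha,\mu$ are small with $\mu<\mu'$ and $M_4,d$ large — uniformly down to $\psi=0$ (where $\varphi_2(\xi,0)=M_4e^{-\alpha\xi}\sqrt2\,b_0\ge|q(\xi,0)|$) and out to $\psi=\infty$. The maximum principle then gives $|\omega_\psi-\bar\omega_\psi|\le\varphi_2\lesssim_\varepsilon e^{-\alpha\xi}e^{-\mu\psi^2}$.

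\textbf{Step 3: bound on $\sqrt{\bar\omega}(\omega_{\psi\psi}-\bar\omega_{\psi\psi})$.} From \eqref{omega} and \eqref{bar}, $\sqrt\omega\,\omega_{\psi\psi}=\omega_\xi-\tfrac12\psi\omega_\psi$ and $\sqrt{\bar\omega}\,\bar\omega_{\psi\psi}=-\tfrac12\psi\bar\omega_\psi$, whence
\[
 \sqrt{\bar\omega}\,(\omega_{\psi\psi}-\bar\omega_{\psi\psi})=\omega_\xi-\tfrac12\psi(\omega_\psi-\bar\omega_\psi)-\frac{\tilde\omega\,\omega_{\psi\psi}}{\sqrt\omega+\sqrt{\bar\omega}},
\]
and the last term, using $|\sqrt\omega\,\omega_{\psi\psi}|\le M$ from Lemma~\ref{ome} together with Proposition~\ref{wsd}, is $O_\varepsilon(e^{-\xi}\psi e^{-\mu\psi^2})$. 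Combined with Steps~1 and~2 this gives $|\sqrt{\bar\omega}(\omega_{\psi\psi}-\bar\omega_{\psi\psi})|\lesssim_\varepsilon e^{-\alpha\xi}\psi e^{-\mu\psi^2}$, completing the plan. The genuinely delicate point is the sign reversal of the zeroth‑order coefficient produced by differentiating in $\psi$ in Step~2; everything else is a (lengthy) transcription of the computations behind Lemma~\ref{w1} and Proposition~\ref{wsd}.
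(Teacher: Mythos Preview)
Your overall plan is in the right spirit but diverges from the paper in two important ways, and one of them is worth knowing about because it removes precisely the ``main obstacle'' you flag.

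\textbf{Step 1.} The paper does \emph{not} use the eigenfunction $\psi\bar\omega_\psi$ as the barrier here; that choice is reserved for the sharp estimate (Proposition~\ref{wxsd}), where the bound on $I_3=\frac{\bar\omega_{\psi\psi}}{2\sqrt{\bar\omega}}-\frac{\omega_{\psi\psi}}{2\sqrt\omega}$ is already available \emph{from} this lemma. Your claim that ``the lower-order difference $\mathcal{L}_2-\mathcal{L}$ is absorbed via Lemma~\ref{lemma1} and Lemma~\ref{w1}'' is not right for the zeroth-order part: those lemmas control $\tilde\omega$, not $\omega_{\psi\psi}-\bar\omega_{\psi\psi}$. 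The paper instead takes the cruder barrier $\varphi_3=M_3e^{-\alpha\xi}\big[(\psi-\psi^{5/4})\mathcal{X}(\psi)+Ne^{-\mu\psi^2}\big]$ with a cutoff $\mathcal{X}$ near the origin, so that positivity of $\mathcal{L}_2(\varphi_3)$ follows from elementary computations and the sign $-\omega_{\psi\psi}/(2\sqrt\omega)\ge 0$ alone, with no eigenrelation and no circularity. (Your route can be rescued by discarding the zeroth-order term via concavity and checking directly that $\big(-\tfrac12\psi\partial_\psi-\sqrt{\bar\omega}\partial_{\psi\psi}\big)(\psi\bar\omega_\psi)=(1+\tfrac{\bar\omega_{\psi\psi}}{2\sqrt{\bar\omega}})\psi\bar\omega_\psi\ge c_0\,\psi\bar\omega_\psi$, which follows from $ff''<2(f')^2$; but this is extra work you did not indicate.)

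\textbf{Step 2.} Here the paper takes a completely different and much simpler route that avoids the sign reversal entirely. Once Step~1 gives $|\omega_\xi|\lesssim e^{-\alpha\xi}\psi e^{-\mu\psi^2}$, equation~\eqref{L1} is rewritten, \emph{for each fixed $\xi$}, as a first-order linear ODE in $\psi$:
\[
z_\psi+I\,z=J,\qquad z:=\tilde\omega_\psi,\quad I:=\frac{\psi}{2\sqrt\omega},\quad J:=\frac{1}{\sqrt\omega}\Big(\tilde\omega_\xi-\frac{\bar\omega_{\psi\psi}}{\sqrt{\bar\omega}+\sqrt\omega}\tilde\omega\Big),
\]
and solved by the integrating factor $e^{\int_0^\psi I}$. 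The initial value $z(0)=\tilde\omega_\psi(\xi,0)$ is $O_\varepsilon(e^{-\xi})$ by Proposition~\ref{wsd}, and $|J|\lesssim e^{-\alpha\xi}e^{-\mu'\psi^2}/\sqrt{\bar\omega}$ integrates against the Gaussian weight to give $|\tilde\omega_\psi|\lesssim_\varepsilon e^{-\alpha\xi}e^{-\mu\psi^2}$ directly. No maximum principle, no barrier, and no need to manufacture a supersolution compensating for the bad-sign coefficient $-(\tfrac12-\mathcal{A})$. Your Step~3 then matches the paper.
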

\begin{proof}
    Define the comparison function:
    \begin{equation}
        \varphi_3 := M_3 e^{-\alpha\xi} \left[ \left(\psi-\psi^{\frac{5}{4}}\right) \mathcal{X}(\psi) + N e^{-\mu \psi^2}\right]
    \end{equation}
    where $\mathcal{X}(\psi)$ is a smooth cutoff function defined as
    
	\begin{equation*}
		\mathcal{X}(\psi)=\left\{
			\begin{aligned}
				1, &\quad 0\leq \psi \leq \frac{\psi_0}{2},\\
				0, & \quad \psi \geq \psi_0,
			\end{aligned}
		\right.   \quad  \left|\mathcal{X}' (\psi)\right|\lesssim 1, \left|\mathcal{X}'' (\psi)\right|\lesssim 1,
	\end{equation*}
	and $M_3$, $\psi_0$, $N$, $\mu$ and $\alpha$ are positive constants to be determined. 
	Construct the  barrier function $s_3 := \varphi_3 \pm r$ satisfying:
	\begin{equation*}
		s_3(\xi,0)\geq 0,\quad s_3(\xi,\infty)=0,\quad s_3(\ln d,\psi) \geq 0,
	\end{equation*}
	for sufficiently large $M_3(\varepsilon)$ and $d(\varepsilon)$.
	
	Compute the operator action on $\varphi_3$:
	\begin{align*}
		\mathcal{L} _2(\varphi_3) \geq &-\alpha \varphi_3 + M_3 N e^{-\alpha\xi}e^{-\mu \psi^2} \left[ (1-4\mu)\mu \psi^2+2\mu \sqrt{\omega}\right]\\
		&+M_3 e^{-\alpha\xi} \left[\frac{5}{16}\sqrt{\omega}\psi^{-\frac{3}{4}}\mathcal{X}-\frac{1}{2}\left(\psi-\psi^{\frac{5}{4}}\right)\mathcal{X}-2\sqrt{\omega}\left(1-\frac{5}{4}\psi^{\frac{1}{4}}\right)\mathcal{X}'\right.\\
		&\left.-\frac{\psi}{2}\left(\psi-\psi^{\frac{5}{4}}\right)\mathcal{X}'-\sqrt{\omega}\left(\psi-\psi^{\frac{5}{4}}\right)\mathcal{X}''\right].
	\end{align*}

	For $\psi\geq \psi_0$, 
	\begin{equation}\label{11}
		\mathcal{L} _2(\varphi_3) \geq M_3 N e^{-\alpha\xi}e^{-\mu \psi^2} \left[ (1-4\mu)\mu \psi^2+2\mu \sqrt{\omega} - \alpha \right].
	\end{equation}

	For $0\leq \psi\leq \frac{\psi_0}{2}$, 
	\begin{equation}\label{12}
		\begin{aligned}
			\mathcal{L} _2(\varphi_3) \geq& M_3 e^{-\alpha\xi} \left[\frac{5}{16}\sqrt{\omega}\psi^{-\frac{3}{4}}-\frac{1}{2}\left(\psi-\psi^{\frac{5}{4}}\right)-\alpha \left(\psi-\psi^{\frac{5}{4}}\right) -\alpha N e^{-\mu \psi^2}\right]\\
			\geq & M_3 e^{-\alpha\xi} \left(k_1 \psi^{-\frac{1}{4}}-K_1 \psi -\alpha N\right), 
		\end{aligned}
	\end{equation}
	where $k_1$ is a small positive constant and $K_1$ is a large positive constant.

	For $\frac{\psi_0}{2}\leq \psi\leq \psi_0$, 
	\begin{equation}\label{13}
		\begin{aligned}
			\mathcal{L} _2(\varphi_3) \geq & M_3 e^{-\alpha\xi}\left( k_1 \psi^{-\frac{1}{4}}-K_2 \psi^{\frac{1}{2}} +k_2 N\right),
		\end{aligned}
	\end{equation}
	where $k_2$ is a small positive constant and $K_2$ is a large positive constant.

	Choose parameters sequentially: In (\ref{12}) and (\ref{13}), we can select $\psi_0$ sufficiently small such that $k_1 \psi^{-\frac{1}{4}}-K_1 \psi -\alpha N \geq 0$ and $-K_2 \psi^{\frac{1}{2}} +k_2 N \geq 0$. Meanwhile, we can determine $N$. Furthermore, in (\ref{11}), we can determine $\mu$ and $\alpha$ small enough.
	
	This yields $\mathcal{L}_2(\varphi_3) \geq 0$ universally. One has
	\begin{equation*}
		\mathcal{L} _2(s_3)=\mathcal{L} _2(\varphi_3)\pm \mathcal{L} _2(r)\geq 0.
	\end{equation*}
	Based on the maximum principle, one can obtain $s_3\geq 0$, i.e.
	\begin{equation}\label{21}
		|\omega_\xi|\leq M_3(\varepsilon) e^{-\alpha\xi} \left[ \left(\psi-\psi^{\frac{5}{4}}\right)\mathcal{X}(\psi) + N e^{-\mu \psi^2}\right]
	\end{equation}
    Refinement near $\psi=0$ via $\varphi_4=M_4 e^{-\alpha\xi} \left(\psi-\psi^{\frac{5}{4}}\right)$ yields:
	\begin{equation}\label{22}
		|\omega_\xi|\leq M_4(\varepsilon) e^{-\alpha\xi} \left(\psi-\psi^{\frac{5}{4}}\right) \quad \text{for} \quad \psi\leq \psi_0.
	\end{equation}
    
    Synthesizing (\ref{21}) and (\ref{22}) and choosing suitable $\mu$, one has
	\begin{equation}\label{ralpha}
		\left|\omega_\xi (\xi,\psi)\right| \lesssim_\varepsilon e^{-\alpha\xi} \psi e^{-\mu \psi^2}
	\end{equation}

    Finally, considering the equation (\ref{L1}) produces
	\begin{equation}
		\sqrt{\omega}\tilde{\omega}_{\psi\psi}+\frac{1}{2}\psi\tilde{\omega}_{\psi}= f: =\tilde{\omega}_\xi-\frac{\bar{\omega}_{\psi\psi}}{\sqrt{\bar{\omega}}+\sqrt{\omega}}\tilde{\omega}.
	\end{equation}
	The equations \eqref{wsharp} and \eqref{ralpha} provide $|f|\lesssim_\varepsilon e^{-\alpha\xi} \psi e^{-\mu \psi^2}$. Let
	\begin{equation*}
		z=\tilde{\omega}_\psi,\quad I =\frac{\psi}{2\sqrt{\omega}},\quad J=\frac{f}{\sqrt{\omega}}.
	\end{equation*}
	One has $z_\psi +I z=J$. Then one can obtain 
	\begin{equation*}
		z(\psi)=\left(z(0)+\int_{0}^{\psi} J(\tau ) e^{\int_{0}^{\tau} I(s)ds}  d\tau \right) e^{-\int_{0}^{\psi}I(\tau) d\tau}.
	\end{equation*}
    Proposition \ref{wsd} yields $\tilde{\omega}_\psi(0) \lesssim e^{-\xi}$. Consequently:
	\begin{equation}\label{psialpha}
		\left|\omega_{\psi}-\bar{\omega}_\psi\right| \lesssim_\varepsilon e^{-\alpha \xi} e^{-\mu \psi^2}.
	\end{equation}
	
	Combining \eqref{L1}, \eqref{psialpha} and \eqref{wsharp}, one establishes
	\begin{equation}
		\left|\sqrt{\bar{\omega}}(\omega_{\psi\psi}-\bar{\omega}_{\psi\psi})\right| \lesssim_\varepsilon e^{-\alpha \xi} \psi e^{-\mu \psi^2}.
	\end{equation}
\end{proof}

By utilizing the decay estimate in Lemma \ref{wx1}, we can obtain the following sharp decay estimate of $\omega_\xi$, $\omega_\psi$ and $\omega_{\psi\psi}$.

\begin{proposition}\label{wxsd}
	Under the hypotheses of Theorem \ref{main2}, there exists a positive constant $\mu$ such that:
	\begin{equation}
		\begin{gathered}
			\left|\omega_\xi\right| \lesssim_\varepsilon e^{-\xi} \psi e^{-\mu \psi^2},\\
			\left|\omega_{\psi}-\bar{\omega}_\psi\right| \lesssim_\varepsilon e^{-\xi} e^{-\mu \psi^2},\\
			\left|\sqrt{\bar{\omega}}(\omega_{\psi\psi}-\bar{\omega}_{\psi\psi})\right| \lesssim_\varepsilon e^{-\xi} \psi e^{-\mu \psi^2}.
		\end{gathered}
	\end{equation}
\end{proposition}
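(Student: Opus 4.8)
The proof runs in parallel with the passage from Lemma \ref{w1} to Proposition \ref{wsd}: the slow rate $e^{-\alpha\xi}$ furnished by Lemma \ref{wx1} is bootstrapped to the sharp rate $e^{-\xi}$ via the maximum principle for $\mathcal{L}_2$, using a comparison function modelled on the principal eigenfunction $\psi\bar\omega_\psi$ (eigenvalue $1$) of the linearized operator $\mathcal{L}$. Here the concavity hypothesis of Theorem \ref{main2} is essential: it yields $\omega_{\psi\psi}\le0$ in $\Omega$ (the concavity-preservation lemma of Section 2), so the zeroth-order coefficient $-\frac{\omega_{\psi\psi}}{2\sqrt\omega}$ of $\mathcal{L}_2$ is nonnegative — indeed it blows up to $+\infty$ as $\psi\to0^+$ — and the maximum principle may be applied to $\mathcal{L}_2$ without an exponential weight.

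First I would bound $\omega_\xi$. Set
\[
	\varphi := M e^{B}\,e^{-\xi}\,\psi\bar\omega_\psi\, e^{-B e^{-\delta\xi}}, \qquad s := \varphi \pm \omega_\xi,
\]
with $\delta\in(0,\alpha]$ and $B>0$ to be chosen. Since $\psi\bar\omega_\psi$ vanishes linearly at $\psi=0$ and decays like $\psi e^{-\mu\psi^2}$ at infinity by Lemma \ref{barw}, while $\omega_\xi(\xi,0)=\omega_\xi(\xi,\infty)=0$, the lateral conditions $s(\xi,0)=s(\xi,\infty)=0$ hold, and $s(\ln d,\psi)\ge0$ follows from \eqref{suppose2} by taking $M(\varepsilon)$ and $d(\varepsilon)$ large, as in Lemma \ref{wx1}. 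The core computation is $\mathcal{L}_2(\psi\bar\omega_\psi)$. Since $(\psi\bar\omega_\psi)_\xi=0$, writing $\mathcal{L}_2=\mathcal{L}_1+\big(\frac{\bar\omega_{\psi\psi}}{\sqrt{\bar\omega}+\sqrt\omega}-\frac{\omega_{\psi\psi}}{2\sqrt\omega}\big)$ and using the expansion of $\mathcal{L}_1(\psi\bar\omega_\psi)$ already carried out in the proof of Proposition \ref{wsd}, one gets $\mathcal{L}_2(\psi\bar\omega_\psi)=\psi\bar\omega_\psi+E$, where $E$ is a sum of terms proportional to $\tilde\omega$, $\omega_\psi-\bar\omega_\psi$ and $\sqrt{\bar\omega}(\omega_{\psi\psi}-\bar\omega_{\psi\psi})$; by Lemma \ref{wx1} and Proposition \ref{wsd} each such contribution is $\lesssim_\varepsilon e^{-\alpha\xi}\,\psi\bar\omega_\psi$, with the degeneracies at $\psi=0$ controlled as in Lemma \ref{wx1}. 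Differentiating the $\xi$-factors of $\varphi$ produces $-\varphi$ from $e^{-\xi}$ and $+\delta B e^{-\delta\xi}\varphi$ from $e^{-Be^{-\delta\xi}}$, hence
\[
	\mathcal{L}_2(\varphi)=\varphi\Big(-1+1+\delta B e^{-\delta\xi}+O_\varepsilon(e^{-\alpha\xi})\Big)\ge0
\]
once $\delta\le\alpha$ and $\delta B$ is large enough. Therefore $\mathcal{L}_2(s)\ge0$, and the maximum principle gives $s\ge0$, i.e. $|\omega_\xi|\le\varphi\lesssim_\varepsilon e^{-\xi}\psi e^{-\mu\psi^2}$ after absorbing $\bar\omega_\psi e^{-Be^{-\delta\xi}}$ into the Gaussian; if the near-origin analysis requires it, one runs the supplementary barrier $M'e^{-\xi}(\psi-\psi^{5/4})$ on $\{\psi\le\psi_0\}$ just as with $\varphi_4$ in Lemma \ref{wx1}.

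Given the sharp bound on $\omega_\xi$, the bounds for $\omega_\psi-\bar\omega_\psi$ and $\omega_{\psi\psi}-\bar\omega_{\psi\psi}$ follow exactly as at the end of Lemma \ref{wx1}. Rewriting \eqref{L1} as $\sqrt\omega\,\tilde\omega_{\psi\psi}+\tfrac12\psi\tilde\omega_\psi=f$ with $f:=\tilde\omega_\xi-\frac{\bar\omega_{\psi\psi}}{\sqrt{\bar\omega}+\sqrt\omega}\tilde\omega$, the sharp estimates of Proposition \ref{wsd} together with the one just proved give $|f|\lesssim_\varepsilon e^{-\xi}\psi e^{-\mu\psi^2}$. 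Viewing this as a first-order linear ODE in $\psi$ for $z=\tilde\omega_\psi$, namely $z_\psi+\frac{\psi}{2\sqrt\omega}z=\frac{f}{\sqrt\omega}$, and using $|\tilde\omega_\psi(\xi,0)|\lesssim_\varepsilon e^{-\xi}$ from Proposition \ref{wsd}, the integrating-factor representation yields $|\omega_\psi-\bar\omega_\psi|\lesssim_\varepsilon e^{-\xi}e^{-\mu\psi^2}$; substituting back into \eqref{L1} gives $|\sqrt{\bar\omega}(\omega_{\psi\psi}-\bar\omega_{\psi\psi})|\lesssim_\varepsilon e^{-\xi}\psi e^{-\mu\psi^2}$.

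The main obstacle is the verification of $\mathcal{L}_2(\varphi)\ge0$ uniformly as $\psi\to0^+$: there $\sqrt\omega\sim\sqrt\psi$ degenerates and $\bar\omega_{\psi\psi\psi}$ blows up, so the error term $E$ must be measured against $\psi\bar\omega_\psi\sim\psi$ with the correct power of $\psi$, and one must confirm that the gain $\delta Be^{-\delta\xi}$ is not overwhelmed near the origin — this is precisely what the cutoff-and-refinement device of Lemma \ref{wx1} accomplishes, and I would import it verbatim. The only remaining care is the order in which the free constants are fixed: $\mu$ first (inherited from Lemma \ref{wx1} and Proposition \ref{wsd}), then $\delta\le\alpha$, then $B$ with $\delta B$ large, and finally $M$ and $d$ from \eqref{suppose2}.
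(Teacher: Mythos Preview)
Your proposal is correct and follows essentially the same approach as the paper: the same comparison function $\varphi=M e^{B}e^{-\xi}\psi\bar\omega_\psi e^{-Be^{-\delta\xi}}$, the same cancellation $-1+1$ coming from the eigenvalue identity $\mathcal{L}(\psi\bar\omega_\psi)=\psi\bar\omega_\psi$, error terms bounded by $e^{-\alpha\xi}$ via Lemma~\ref{wx1} and Proposition~\ref{wsd}, and the same ODE-in-$\psi$ argument for the remaining two estimates. Two minor remarks: the error $E$ does not actually involve $\omega_\psi-\bar\omega_\psi$ (only $\tilde\omega$ through $I_2$ and $\tilde\omega_{\psi\psi}$ through $I_3=\frac{\bar\omega_{\psi\psi}}{2\sqrt{\bar\omega}}-\frac{\omega_{\psi\psi}}{2\sqrt\omega}$), and the near-origin cutoff/refinement you flag as a potential obstacle is not needed here --- the paper verifies directly that $|I_2|\lesssim e^{-\xi}$ and $|I_3|\lesssim e^{-\alpha\xi}$ hold uniformly down to $\psi=0$, so $\varphi_5$ works globally without the $\varphi_4$-type device.
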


\begin{proof}
    Define the enhanced comparison function
    \begin{equation}
        \varphi_5 := M_5 e^B e^{- \xi} \psi \bar{\omega}_{\psi} e^{-B \exp(-\delta \xi)},
    \end{equation}
	where $\delta$ and $B$ are positive constants to be determined. Consider the barrier function $s_5=\varphi_5 \pm  r$ satisfying:
	\begin{equation*}
		s_5(\xi,0)=s_5(\xi,\infty)=0,\quad s_5(\ln d,\psi)\geq 0,
	\end{equation*}
	for sufficiently large $M_5(\varepsilon)$ and $d(\varepsilon)$.

	The operator action yields
	\begin{align*}
		\mathcal{L} _2(\varphi_5)=&-\varphi_5+\delta B e^{-\delta \xi}\varphi_5+ \frac{\varphi_5}{\psi \bar{\omega}_{\psi}} \mathcal{L}_2(\psi \bar{\omega}_{\psi})\\
		=& \varphi_5 \left(-\frac{\tilde{\omega}}{\sqrt{\omega}+\sqrt{\bar{\omega}}}\frac{\psi \bar{\omega}_{\psi\psi\psi}+2\bar{\omega}_{\psi\psi}}{\psi \bar{\omega}_{\psi}}+\frac{\bar{\omega}_{\psi\psi}}{2\sqrt{\bar{\omega}}}-\frac{\omega_{\psi\psi}}{2\sqrt{\omega}}+\delta B e^{-\delta \xi}\right)\\
		=& \varphi_5 \left(I_2+I_3+\delta B e^{-\delta \xi}\right).
	\end{align*}
	By Lemma \ref{barw}, Lemma \ref{lemma1}, Lemma \ref{lemma2}, Lemma \ref{wx1} and Theorem \ref{wsd}, one can derive
	\begin{align*}
		&|I_2| \lesssim \left| \frac{\psi \bar{\omega}_{\psi\psi\psi}+2\bar{\omega}_{\psi\psi}}{\psi \bar{\omega}_{\psi}} \frac{\bar{\omega}}{\sqrt{\omega}+\sqrt{\bar{\omega}}}\right| \left|\frac{\tilde{\omega}}{\bar{\omega}}\right|\lesssim  e^{-\xi},\\
		&|I_3|\lesssim \left|\frac{\tilde{\omega}_{\psi\psi}}{\sqrt{\bar{\omega}}}\right|\lesssim e^{-\alpha \xi}.
	\end{align*}
    Selecting $\delta \leq \alpha$ with sufficiently large $\delta B$ ensures
	\begin{equation*}
		\mathcal{L} _2(\varphi_5) \geq \varphi_2 \left(I_2+I_3+\delta B e^{-\delta \xi}\right) \geq 0.
	\end{equation*}
	Consequently,
	\begin{equation*}
		\mathcal{L} _2(s_5)=\mathcal{L} _2(\varphi_5)\pm \mathcal{L} _2(r)\geq 0.
	\end{equation*}

    The maximum principle gives $s_5 \geq 0$, yielding
	\begin{equation}
		\left|\omega_\xi\right| \lesssim_\varepsilon e^{-\xi} \psi e^{-\mu \psi^2}
	\end{equation}
    for appropriate $\mu$.

	Following the argument in Lemma \ref{wx1}, one directly establishes
	\begin{equation}
		\begin{aligned}
			\left|\omega_{\psi}-\bar{\omega}_\psi\right| &\lesssim_\varepsilon e^{-\xi} e^{-\mu \psi^2},\\
			\left|\sqrt{\bar{\omega}}(\omega_{\psi\psi}-\bar{\omega}_{\psi\psi})\right| &\lesssim_\varepsilon e^{-\xi} \psi e^{-\mu \psi^2}.
		\end{aligned}
	\end{equation}
\end{proof}

\subsection{Sharp Decay Estimates of High-order Derivatives of $\omega$}
\ 
\newline
\indent In the subsection, we obtain the following sharp decay estimate of $\omega_{\xi\xi}$, $\omega_{\xi\psi}$ and $\omega_{\xi\psi\psi}$.

\begin{proposition}\label{wxxsd}
	Under the hypotheses of Theorem \ref{main2}, there exists a positive constant $\mu$ such that:
	\begin{equation}
		\begin{aligned}
			&\left|\omega_{\xi\xi}\right| \lesssim_\varepsilon e^{-\xi} \psi e^{-\mu \psi^2},\quad
			\left|\omega_{\xi\psi}\right| \lesssim_\varepsilon e^{-\xi} e^{-\mu \psi^2},\\
			&\left|\sqrt{\bar{\omega}}\omega_{\xi\psi\psi}\right| \lesssim_\varepsilon e^{-\xi} \psi e^{-\mu \psi^2},\\
			&\left|\bar{\omega}^{\frac{3}{2}} \left(\omega_{\psi\psi\psi}-\bar{\omega}_{\psi\psi\psi}\right)\right| \lesssim_\varepsilon e^{-\xi} \psi e^{-\mu \psi^2},\\
			&\left|\bar{\omega}^{\frac{5}{2}} \left(\omega_{\psi\psi\psi\psi}-\bar{\omega}_{\psi\psi\psi\psi}\right)\right| \lesssim_\varepsilon e^{-\xi} \psi e^{-\mu \psi^2}.
		\end{aligned}
	\end{equation}
\end{proposition}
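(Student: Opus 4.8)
The plan is to reproduce, for $q:=\omega_{\xi\xi}$, the two--stage scheme already used for $\omega_\xi$ in Lemma~\ref{wx1} and Proposition~\ref{wxsd}: a crude low--decay bound $|q|\lesssim_\varepsilon e^{-\alpha\xi}\psi e^{-\mu\psi^2}$ from a cut--off barrier, then enhancement to the sharp rate $e^{-\xi}$ from a barrier built around the eigenfunction $\psi\bar\omega_\psi$ of $\mathcal L$; once $\omega_{\xi\xi}$ is controlled, the remaining derivatives are recovered without any PDE, by solving first--order linear ODEs in $\psi$ exactly as at the end of Lemma~\ref{wx1}. The higher--regularity hypothesis~\eqref{suppose2} and the corner compatibility conditions ensure that all derivatives below exist, are continuous and locally bounded, so every maximum--principle step is justified and, in particular, $\omega_{\xi\psi},\omega_{\xi\psi\psi}$ may be used with only an a priori boundedness to start.

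\emph{Step 1 (equation for $\omega_{\xi\xi}$).} Differentiating \eqref{l2} in $\xi$ gives
\[
\mathcal L_2(q)=\partial_\xi(\sqrt\omega)\,\omega_{\xi\psi\psi}+\partial_\xi\!\Big(\tfrac{\omega_{\psi\psi}}{2\sqrt\omega}\Big)\,\omega_\xi .
\]
The only term with a third derivative is $\tfrac{\omega_\xi}{2\sqrt\omega}\,\omega_{\xi\psi\psi}$, which I eliminate via the identity obtained by differentiating \eqref{omega} once in $\xi$,
\[
\sqrt\omega\,\omega_{\xi\psi\psi}=\omega_{\xi\xi}-\tfrac12\psi\,\omega_{\xi\psi}-\tfrac{\omega_\xi\omega_{\psi\psi}}{2\sqrt\omega},
\]
so that $\mathcal L_2(q)=c\,q+d\,\omega_{\xi\psi}+h$ with, by Lemma~\ref{ome}, Lemma~\ref{lemma1}, Lemma~\ref{lemma2} and Proposition~\ref{wxsd}, $|c|\lesssim_\varepsilon e^{-\xi}$ (harmlessly small, moved to the left) and $|d|+|h|\lesssim_\varepsilon e^{-\xi}(1+\psi)^2e^{-\mu_0\psi^2}$, where $\mu_0$ is the exponent produced by the earlier estimates. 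With the a priori boundedness of $\omega_{\xi\psi}$ the whole right--hand side is then $\lesssim_\varepsilon e^{-\xi}(1+\psi)^2e^{-\mu_0\psi^2}$.

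\emph{Step 2 (low decay, enhancement, and $\omega_{\xi\psi},\omega_{\xi\psi\psi}$).} The cut--off barrier $M e^{-\alpha\xi}\big[(\psi-\psi^{5/4})\mathcal X(\psi)+Ne^{-\mu\psi^2}\big]$ of Lemma~\ref{wx1}, with the same three--region bookkeeping, dominates this right--hand side and gives $|\omega_{\xi\xi}|\lesssim_\varepsilon e^{-\alpha\xi}\psi e^{-\mu\psi^2}$, refined near $\psi=0$ to $e^{-\alpha\xi}(\psi-\psi^{5/4})$ exactly as in \eqref{22}; feeding this into the identity of Step~1 and into the integrating--factor solution of $(\omega_{\xi\psi})_\psi+\tfrac{\psi}{2\sqrt\omega}\omega_{\xi\psi}=\tfrac1{\sqrt\omega}\big(\omega_{\xi\xi}-\tfrac{\omega_{\psi\psi}}{2\sqrt\omega}\omega_\xi\big)$, with $\omega_\xi(\xi,0)=0$ and $|\omega_{\xi\psi}(\xi,0)|\lesssim_\varepsilon e^{-\alpha\xi}$ read off from the linear--in--$\psi$ bound on $\omega_\xi$ near the origin, upgrades $\omega_{\xi\psi}$ and $\sqrt{\bar\omega}\omega_{\xi\psi\psi}$ to the rate $e^{-\alpha\xi}$ as well. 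Now every inhomogeneous term in the equation for $q$ carries an extra factor $e^{-\alpha\xi}$ and a Gaussian of doubled exponent $2\mu_0$, so I run the enhancement with the hybrid barrier
\[
\varphi:=M e^{B}e^{-\xi}\big[\psi\bar\omega_\psi\,\mathcal X(\psi)+Ne^{-\mu\psi^2}\big]e^{-B\exp(-\delta\xi)},\qquad 0<\delta\le\alpha,\quad \mu<2\mu_0,
\]
with $\delta B$ large: on $\{\psi\lesssim\psi_0\}$ the piece $\psi\bar\omega_\psi$ is the eigenfunction, so $\mathcal L_2$ of it is $\varphi$ times $O(e^{-\alpha\xi})+\delta Be^{-\delta\xi}$ (the computation of Proposition~\ref{wxsd}), which beats the source $\lesssim e^{-(1+\alpha)\xi}\psi$ there; on $\{\psi\gtrsim\psi_0\}$ the transport term $-\tfrac12\psi\partial_\psi$ applied to $Ne^{-\mu\psi^2}$ produces $+\mu\psi^2Ne^{-\mu\psi^2}$, which since $\mu<2\mu_0$ dominates both the $-\varphi$ coming from the prescribed rate and the source $\lesssim e^{-(1+\alpha)\xi}\psi^2e^{-2\mu_0\psi^2}$ for large $\psi$ ($N$ handling the intermediate range). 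This yields $|\omega_{\xi\xi}|\lesssim_\varepsilon e^{-\xi}\psi e^{-\mu\psi^2}$; re--running the same algebraic identity and ODE, now with $|\omega_{\xi\psi}(\xi,0)|\lesssim_\varepsilon e^{-\xi}$ from Proposition~\ref{wxsd}, gives $|\omega_{\xi\psi}|\lesssim_\varepsilon e^{-\xi}e^{-\mu\psi^2}$ and $|\sqrt{\bar\omega}\omega_{\xi\psi\psi}|\lesssim_\varepsilon e^{-\xi}\psi e^{-\mu\psi^2}$ (shrinking $\mu$ to absorb polynomial factors).

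\emph{Step 3 (pure $\psi$--derivatives) and the main obstacle.} Differentiating \eqref{omega} once and twice in $\psi$, differentiating \eqref{bar} likewise and subtracting, produces first--order linear ODEs in $\psi$ for $\sqrt\omega(\omega_{\psi\psi\psi}-\bar\omega_{\psi\psi\psi})$ and then for $\sqrt\omega(\omega_{\psi\psi\psi\psi}-\bar\omega_{\psi\psi\psi\psi})$, whose right--hand sides are assembled from $\tilde\omega,\ \tilde\omega_\psi,\ \sqrt{\bar\omega}\tilde\omega_{\psi\psi},\ \omega_{\xi\psi},\ \sqrt{\bar\omega}\omega_{\xi\psi\psi}$ (and $\omega_{\psi\psi\psi}-\bar\omega_{\psi\psi\psi}$ for the second ODE), all already bounded sharply, together with $\sqrt\omega-\sqrt{\bar\omega}=\tilde\omega/(\sqrt\omega+\sqrt{\bar\omega})$; integrating with the factor $\exp(\int_0^\psi\tfrac{\tau}{2\sqrt\omega}\,d\tau)$ and using the boundary values at $\psi=0$ extracted from the linear--in--$\psi$ behavior of the lower--order estimates yields $|\omega_{\psi\psi\psi}-\bar\omega_{\psi\psi\psi}|\lesssim_\varepsilon\bar\omega^{-3/2}e^{-\xi}\psi e^{-\mu\psi^2}$ and $|\omega_{\psi\psi\psi\psi}-\bar\omega_{\psi\psi\psi\psi}|\lesssim_\varepsilon\bar\omega^{-5/2}e^{-\xi}\psi e^{-\mu\psi^2}$ after a final shrinking of $\mu$ --- the stated bounds. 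The main obstacle is that the equation for $\omega_{\xi\xi}$ is not homogeneous: its source contains $\omega_{\xi\psi\psi}$, one of the unknowns, and a barrier built purely from $\psi\bar\omega_\psi$ fails in the far field because the source tail is fatter than the eigenfunction's. This is overcome by the algebraic elimination of Step~1, which turns the third--derivative term into a small multiple of $\omega_{\xi\xi}$ itself plus genuinely lower--order terms, and by the bootstrap of Step~2, after which the source carries the extra factor $e^{-\alpha\xi}$ and the doubled Gaussian weight that make the hybrid barrier (eigenfunction near $\psi=0$, Gaussian for large $\psi$) close; the accumulating $\bar\omega^{-1/2}$ singularities at $\psi=0$ are precisely what the weights $\bar\omega^{3/2}$ and $\bar\omega^{5/2}$ in the statement absorb.
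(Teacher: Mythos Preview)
Your overall two--stage bootstrap (crude decay for $\omega_{\xi\xi}$, then enhancement to $e^{-\xi}$, then recovery of $\omega_{\xi\psi}$, $\sqrt{\bar\omega}\omega_{\xi\psi\psi}$ and the pure $\psi$--derivatives by first--order ODEs) matches the paper's strategy, and the algebraic elimination of $\omega_{\xi\psi\psi}$ in Step~1 reproduces exactly the paper's operator $\mathcal L_4$ with source $G=-\tfrac{\psi\omega_\xi\omega_{\xi\psi}}{4\omega}-\tfrac{\omega_{\psi\psi}\omega_\xi^2}{2\omega^{3/2}}$. The two genuine points of divergence are the preliminary bound on $\omega_{\xi\psi}$ and the enhancement barrier.

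For the first, you rely on mere a priori boundedness of $\omega_{\xi\psi}$; the paper instead writes a separate equation for $z:=\omega_{\xi\psi}$ and uses the barrier $\varphi_6=M_6 e^{-\xi/4}\omega_\psi$ after the substitution $Z=e^{-\xi/2}z$, obtaining the crude (indeed growing) bound $|\omega_{\xi\psi}|\lesssim e^{\xi/4}\omega_\psi$. The point is that this already carries the $\omega_\psi\sim\bar\omega_\psi$ spatial tail, so that after one pass one gets $|G|\lesssim e^{-3\xi/4}\psi\bar\omega_\psi$ with the \emph{fast} Gaussian of $\bar\omega_\psi$.

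This is exactly where your hybrid barrier runs into trouble. The eigenfunction identity $\mathcal L(\psi\bar\omega_\psi)=\psi\bar\omega_\psi$ is what cancels the $-\varphi$ produced by differentiating the prefactor $e^{-\xi}$, and it holds \emph{globally} in $\psi$, not only near $\psi=0$. Your Gaussian tail $Ne^{-\mu\psi^2}$ has no such cancellation: for $\psi$ of order one and $\xi$ large, the bracket in $\mathcal L_4(\varphi)$ contains $\mu\psi^2(1-4\mu\sqrt\omega)+2\mu\sqrt\omega-\tfrac{\omega_{\psi\psi}}{2\sqrt\omega}-1+\delta Be^{-\delta\xi}$, and since $-\tfrac{\bar\omega_{\psi\psi}}{2\sqrt{\bar\omega}}\to\tfrac14$ as $\psi\to0$ while $\delta Be^{-\delta\xi}\to0$, this bracket is strictly negative on a $\psi$--interval of positive length for all large $\xi$; enlarging $N$ does not help because it scales $\mathcal L_4(\varphi)$ and $\varphi$ equally. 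The remedy is the one the paper uses: drop the hybrid entirely and take $\varphi_8=M_8 e^B e^{-\xi}\psi\bar\omega_\psi e^{-B\exp(-\delta\xi)}$. Your worry that ``the source tail is fatter than the eigenfunction's'' is unfounded once you track that the bounds from Proposition~\ref{wxsd} are really $|\omega_\xi|\lesssim e^{-\xi}\psi\bar\omega_\psi$ (not merely $e^{-\xi}\psi e^{-\mu\psi^2}$ with a small $\mu$), so after one bootstrap $|G|\lesssim e^{-7\xi/4}\psi\bar\omega_\psi$ and the pure eigenfunction barrier closes with $\delta\le 3/4$. The rest of your outline --- the ODE recovery of $\omega_{\xi\psi}$, $\sqrt{\bar\omega}\omega_{\xi\psi\psi}$, and the differentiation of \eqref{L1} for the third and fourth $\psi$--derivatives --- is correct and is what the paper does.
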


\begin{proof}
    We begin by estimating $\omega_{\xi\psi}$. Define $z := \omega_{\xi\psi}$, which satisfies
	\begin{equation}
		z_\xi-\sqrt{\omega} z_{\psi\psi}-\frac{1}{2}\psi z_\psi-\frac{\omega_\psi}{2\sqrt{\omega}} z_\psi-\frac{1}{2}z-\frac{\omega_{\psi\psi}}{2\sqrt{\omega}} z-\frac{\omega_\xi}{2\omega} z =H,
	\end{equation}
	where
	\begin{equation*}
		H =-\frac{\omega_\xi \omega_{\psi}}{4\omega}-\frac{\psi \omega_\xi \omega_{\psi\psi}}{4\omega}-\frac{\omega_\xi \omega_\psi \omega_{\psi\psi}}{2\omega^\frac{3}{2}}.
	\end{equation*}
	The coefficient analysis reveals that for sufficiently large $\xi$:
	\begin{equation*}
		-\frac{1}{2}-\frac{\omega_{\psi\psi}}{2\sqrt{\omega}}-\frac{\omega_\xi}{2\omega}\geq -\frac{1}{2}.
	\end{equation*}
	
	Define $Z=e^{-\frac{1}{2}\xi}z$, which satisfies the transformed equation
	\begin{equation*}
		\mathcal{L}_3(Z)=Z_\xi-\sqrt{\omega} Z_{\psi\psi}-\frac{1}{2}\psi Z_\psi-\frac{\omega_\psi}{2\sqrt{\omega}} Z_\psi-\frac{\omega_{\psi\psi}}{2\sqrt{\omega}} Z-\frac{\omega_\xi}{2\omega} Z = e^{-\frac{1}{2}\xi}H.
	\end{equation*}
	Consider the comparison function $\varphi_6=M_6 e^{- \frac{1}{4}\xi}\omega_{\psi}$. Direct computation yields
	\begin{equation*}
		\mathcal{L} _3(\varphi_6)\geq -\frac{1}{4}\varphi_6+M_6 e^{- \frac{1}{4}\xi}\left(\frac{1}{2}\omega_{\psi}\right)=\frac{1}{4} \varphi_6.
	\end{equation*}
	Proposition \ref{wxsd} provides the bound
	\begin{equation*}
		\left|e^{-\frac{1}{2}\xi} H\right| \lesssim e^{-\xi} \omega_{\psi}.
	\end{equation*}
	Selecting sufficiently large $M_6$ ensures
	\begin{equation*}
		\mathcal{L} _3(\varphi_6)\pm \mathcal{L} _3(Z) \geq 0.
	\end{equation*}
	The maximum principle gives $\left| Z \right| \lesssim e^{- \frac{1}{4}\xi}\omega_{\psi}$, hence
	\begin{equation}
		\left| \omega_{\xi\psi} \right| \lesssim e^{\frac{1}{4}\xi} \omega_{\psi}.
	\end{equation}

    For $\omega_{\xi\xi}$ estimation, define $t := \omega_{\xi\xi}$ satisfying
	\begin{equation}
		\mathcal{L} _4 (t)= t_\xi-\sqrt{\omega} t_{\psi\psi}-\frac{1}{2}\psi t_\psi-\frac{\omega_{\psi\psi}}{2\sqrt{\omega}} t-\frac{\omega_\xi}{\omega} t =G,
	\end{equation}
	with boundary conditions
	\begin{equation*}
	 t(\xi,0)= t(\xi,\infty)=0,
	\end{equation*}
	where
	\begin{equation*}
		G=-\frac{\psi \omega_\xi \omega_{\xi\psi}}{4\omega}-\frac{\omega_{\psi\psi}\omega_\xi^2}{2\omega^\frac{3}{2}}.
	\end{equation*}
	The coefficient analysis reveals that for sufficiently large $\xi$:
	\begin{equation}
		-\frac{\omega_{\psi\psi}}{2\sqrt{\omega}}-\frac{\omega_\xi}{\omega}\geq 0.
	\end{equation}

    Construct the comparison function 
    \begin{equation}
        \varphi_7=M_7 e^B e^{- \frac{3}{4}\xi} \psi \bar{\omega}_{\psi} e^{-B \exp(-\xi)}
    \end{equation}
    satisfying 
	\begin{equation*}
		(\varphi_7\pm t) (\xi,0)=(\varphi_7\pm t) (\xi,\infty)=0.
	\end{equation*}
	Through detailed computation, one establishes
	\begin{align*}
		\mathcal{L} _4 (\varphi_7)=&- \frac{3}{4} \varphi_7 +B e^{-\xi}\varphi_7 + \frac{\varphi_7}{\psi \bar{\omega}_{\psi}} \mathcal{L}_4 (\psi \bar{\omega}_{\psi})\\
		=& \varphi_7 \left(\frac{1}{4} -\frac{\tilde{\omega}}{\sqrt{\omega}+\sqrt{\bar{\omega}}}\frac{\psi \bar{\omega}_{\psi\psi\psi}+2\bar{\omega}_{\psi\psi}}{\psi \bar{\omega}_{\psi}}+\frac{\bar{\omega}_{\psi\psi}}{2\sqrt{\bar{\omega}}}-\frac{\omega_{\psi\psi}}{2\sqrt{\omega}}-\frac{\omega_\xi}{\omega}+ B e^{- \xi}\right)\\
		\geq & \frac{M_7}{4} e^{- \frac{3}{4}\xi} \psi \bar{\omega}_{\psi}
	\end{align*}
	for sufficiently large $B$. Combining with the bound
	\begin{equation*}
		\left| G \right| \lesssim e^{-\frac{3}{4} \xi} \psi \bar{\omega}_{\psi},
	\end{equation*}
	one obtains
	\begin{equation*}
		\mathcal{L} _4(\varphi_7)\pm \mathcal{L} _4(t) \geq 0
	\end{equation*}
	for $M_7$ large enough. Based on the maximum principle, one has
	\begin{equation}
		\left| \omega_{\xi\xi} \right| \lesssim e^{- \frac{3}{4}\xi} \psi \bar{\omega}_{\psi}.
	\end{equation}

    Considering the equation \eqref{l2} produces
	\begin{equation*}
		\sqrt{\omega}r_{\psi\psi}+\frac{1}{2}\psi r_\psi=\tilde{f}:=t-\frac{\omega_{\psi\psi}}{2\sqrt{\omega}}r,
	\end{equation*}
	where $\left|\tilde{f}\right|\lesssim e^{- \frac{3}{4}\xi} \psi \bar{\omega}_{\psi}$. This establishes
	\begin{equation*}
		\left| \omega_{\xi\psi} \right| \lesssim e^{- \frac{3}{4}\xi} e^{-\mu \psi^2}.
	\end{equation*}
	Then one has the following high-order decay estimate of $G$:
	\begin{equation*}
		\left| G \right| \lesssim e^{-\frac{7}{4} \xi} \psi \bar{\omega}_{\psi}.
	\end{equation*}

	Finally, define the enhanced comparison function
	\begin{equation}
	    \varphi_8 = M_8 e^B e^{-\xi} \psi \bar{\omega}_{\psi} e^{-B \exp(-\delta\xi)}.
	\end{equation}
	The operator action yields
	\begin{align*}
		\mathcal{L} _4 (\varphi_8) = & -\varphi_7 +\delta B e^{-\delta\xi} \varphi_8 + \frac{\varphi_8}{\psi \bar{\omega}_{\psi}} \mathcal{L}_4 (\psi \bar{\omega}_{\psi})\\
		=& \varphi_8 \left( -\frac{\tilde{\omega}}{\sqrt{\omega}+\sqrt{\bar{\omega}}}\frac{\psi \bar{\omega}_{\psi\psi\psi}+2\bar{\omega}_{\psi\psi}}{\psi \bar{\omega}_{\psi}}+\frac{\bar{\omega}_{\psi\psi}}{2\sqrt{\bar{\omega}}}-\frac{\omega_{\psi\psi}}{2\sqrt{\omega}}-\frac{\omega_\xi}{\omega}+\delta B e^{-\delta \xi}\right)\\
		\geq & \frac{M_8}{2} e^{-(1+\delta)\xi} \psi \bar{\omega}_{\psi}
	\end{align*}
	for $0<\delta \leq \frac{3}{4}$ and $\delta B$ sufficiently large. Then one has
	\begin{equation*}
		\mathcal{L} _4(\varphi_8)\pm \mathcal{L} _4(t) \geq 0
	\end{equation*}
	for $M_8$ large enough. The maximum principle gives
	\begin{equation}
		\left| \omega_{\xi\xi} \right| \lesssim_\varepsilon e^{-\xi} \psi \bar{\omega}_{\psi}.
	\end{equation}
	
	Similar to the proof in Lemma \ref{wx1}, one can easily obtained
	\begin{equation}
		\begin{aligned}
			&\left|\omega_{\xi\psi}\right| \lesssim_\varepsilon e^{-\xi} e^{-\mu \psi^2},\\
			&\left|\sqrt{\bar{\omega}} \omega_{\xi\psi\psi}\right| \lesssim_\varepsilon e^{-\xi} \psi e^{-\mu \psi^2}.
		\end{aligned}
	\end{equation}

    Differentiating the equation \eqref{L1} with respect to $\psi$ yields
	\begin{equation}
		\left|\bar{\omega}^{\frac{3}{2}} \left(\omega_{\psi\psi\psi}-\bar{\omega}_{\psi\psi\psi}\right)\right| \lesssim_\varepsilon e^{-\xi} \psi e^{-\mu \psi^2}.
	\end{equation}

	Double differentiation of \eqref{L1} with respect to $\psi$ produces
	\begin{equation}
		\left|\bar{\omega}^{\frac{5}{2}} \left(\omega_{\psi\psi\psi\psi}-\bar{\omega}_{\psi\psi\psi\psi}\right)\right| \lesssim_\varepsilon e^{-\xi} \psi e^{-\mu \psi^2}.
	\end{equation}

\end{proof}

For the decay estimates of higher-order derivatives of $\omega$, we can first obtain the estimates of $\partial_\xi^i \omega$, and then derive the estimates of the first and second derivatives in the $\psi$-direction through the equations satisfied by $\partial_\xi^i \omega$, subsequently obtaining the estimations of higher-order derivatives in the $\psi$-direction by differentiating the corresponding equation. Then we have the following decay estimates:
\begin{proposition}\label{wxxxsd}
	Under the hypotheses of Theorem \ref{main2}, there exists a positive constant $\mu$ such that for all integers $i, j \geq 0$ satisfying $2i+j\leq 2K_0$:
	\begin{equation}
		\begin{aligned}
			&\left|\partial_\xi^i \tilde{\omega} \right| \lesssim_\varepsilon e^{-\xi} \psi e^{-\mu \psi^2},\\
			&\left|\partial_\xi^i \partial_\psi \tilde{\omega} \right| \lesssim_\varepsilon e^{-\xi} e^{-\mu \psi^2},\\
			&\left|\bar{\omega}^{\frac{2j-3}{2}} \partial_\xi^i \partial_\psi^j \tilde{\omega} \right| \lesssim_\varepsilon e^{-\xi} \psi e^{-\mu \psi^2},\quad j\geq 2.
		\end{aligned}
	\end{equation}
\end{proposition}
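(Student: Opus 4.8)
The plan is to proceed by induction on the order $i$ of the $\xi$-derivative, using at each step the same two-tiered barrier argument (a low-decay estimate with rate $e^{-\alpha\xi}$ followed by an enhanced estimate with the sharp rate $e^{-\xi}$) that was carried out for $\tilde{\omega}$, $\omega_\xi$ and $\omega_{\xi\xi}$ in Lemma \ref{w1}, Proposition \ref{wsd}, Lemma \ref{wx1}, Proposition \ref{wxsd} and Proposition \ref{wxxsd}. The base cases $i=0,1,2$ are exactly those propositions. For the inductive step, set $r_i := \partial_\xi^i \omega$ for $i\geq 1$; differentiating \eqref{omega} (equivalently \eqref{l2}) $i$ times in $\xi$ shows that $r_i$ satisfies a linear parabolic equation of the form
\begin{equation*}
	\partial_\xi r_i-\sqrt{\omega}\,\partial_\psi^2 r_i-\tfrac{1}{2}\psi\,\partial_\psi r_i-\tfrac{\omega_{\psi\psi}}{2\sqrt{\omega}} r_i-\tfrac{i\,\omega_\xi}{2\omega} r_i = G_i,
\end{equation*}
with $r_i(\xi,0)=r_i(\xi,\infty)=0$, where $G_i$ collects lower-order terms, i.e. products of $\partial_\xi^a\partial_\psi^b\omega$ with $a\leq i-1$ and $b\leq 2$, each accompanied by negative powers of $\omega$. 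The crucial structural point, already exploited for $t=\omega_{\xi\xi}$, is that the zeroth-order coefficient $-\tfrac{\omega_{\psi\psi}}{2\sqrt{\omega}}-\tfrac{i\,\omega_\xi}{2\omega}$ is $\geq 0$ for $\xi$ large (since $\sqrt{\omega}\omega_{\psi\psi}\leq 0$ by concavity preservation and $\omega_\xi\to 0$), so the operator obeys a maximum principle on $\{\xi\geq \xi_*\}$; the region $\{\xi\leq\xi_*\}$ is handled by choosing the multiplicative constant large, as in the earlier proofs.

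The first tier establishes a low-decay bound $|r_i|\lesssim_\varepsilon e^{-\alpha\xi}\psi e^{-\mu\psi^2}$. Here one uses the inductive hypothesis to bound $G_i$ — each term in $G_i$ contains at least one factor $\partial_\xi^a(\cdot)$ with $a\geq 1$, which by induction already carries an $e^{-\alpha\xi}$ (indeed $e^{-\xi}$) factor — so $|G_i|\lesssim_\varepsilon e^{-(1+\text{something})\xi}\psi\bar{\omega}_\psi$, which is more than enough. The barrier is the composite $\varphi = M e^{-\alpha\xi}[(\psi-\psi^{5/4})\mathcal{X}(\psi)+Ne^{-\mu\psi^2}]$ from Lemma \ref{wx1}; the sign computation $\mathcal{L}(\varphi)\geq 0$ is identical in structure, the only change being the new zeroth-order coefficient, which only helps since it is nonnegative for large $\xi$. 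One then refines near $\psi=0$ with $\varphi = M e^{-\alpha\xi}(\psi-\psi^{5/4})$ exactly as in \eqref{22} to upgrade the linear-in-$\psi$ vanishing. The second tier then installs the sharp rate: with $\varphi = M e^B e^{-\xi}\psi\bar{\omega}_\psi e^{-B\exp(-\delta\xi)}$ and the key identity
\begin{equation*}
	\mathcal{L}(\psi\bar{\omega}_\psi) = \psi\bar{\omega}_\psi + \tfrac{\bar{\omega}_{\psi\psi}\tilde{\omega}}{2\sqrt{\bar{\omega}}(\sqrt{\omega}+\sqrt{\bar{\omega}})^2}\psi\bar{\omega}_\psi - \tfrac{\tilde{\omega}}{\sqrt{\omega}+\sqrt{\bar{\omega}}}(\psi\bar{\omega}_{\psi\psi\psi}+2\bar{\omega}_{\psi\psi})
\end{equation*}
(which reflects that $\psi\bar{\omega}_\psi$ is the principal eigenfunction of $\mathcal{L}$ with eigenvalue $1$), the extra terms are $O(e^{-\alpha\xi})$ by the first-tier bound plus Lemma \ref{w1} / Lemma \ref{wx1}, while $\delta B e^{-\delta\xi}$ with $\delta\leq\alpha$ and $\delta B$ large dominates them; combined with $|G_i|\lesssim e^{-(1+\delta)\xi}\psi\bar{\omega}_\psi$, the maximum principle gives $|r_i|\lesssim_\varepsilon e^{-\xi}\psi\bar{\omega}_\psi\lesssim e^{-\xi}\psi e^{-\mu\psi^2}$. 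This proves the first line of the proposition for all $i$ with $2i\leq 2K_0$.

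Once $\partial_\xi^i\tilde\omega$ is controlled, the $\psi$-derivative estimates follow without any further maximum principle, exactly as in Lemma \ref{wx1}: the equation for $\partial_\xi^i\tilde\omega$ can be read as a first-order linear ODE in $\psi$ for $z=\partial_\psi(\partial_\xi^i\tilde\omega)$, namely $z_\psi + \tfrac{\psi}{2\sqrt{\omega}}z = \tfrac{f_i}{\sqrt{\omega}}$ with $f_i := \partial_\xi^{i+1}\tilde\omega - (\text{zeroth-order terms})$ already shown to be $\lesssim_\varepsilon e^{-\xi}\psi e^{-\mu\psi^2}$; solving by the integrating factor $\exp(\int_0^\psi \tfrac{s}{2\sqrt{\omega}}ds)$ and using $\partial_\psi\partial_\xi^i\tilde\omega(\xi,0)\lesssim e^{-\xi}$ (itself obtained from the already-proven estimates, as in Proposition \ref{wxsd}) yields $|\partial_\xi^i\partial_\psi\tilde\omega|\lesssim_\varepsilon e^{-\xi}e^{-\mu\psi^2}$. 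Then $\sqrt{\bar\omega}\,\partial_\xi^i\partial_\psi^2\tilde\omega$ is recovered algebraically from the equation, and the higher $\psi$-derivatives ($j\geq 3$) by differentiating the equation in $\psi$ repeatedly, each differentiation producing one more factor of $\bar\omega^{-1/2}$ near $\psi=0$ (since $\sqrt{\omega}$ multiplies the top-order term and $\bar{\omega}\sim\psi^2$), which is exactly the weight $\bar\omega^{(2j-3)/2}$ in the statement; the right-hand sides remain of size $e^{-\xi}\psi e^{-\mu\psi^2}$ because every term produced is a product involving a factor already estimated. The main obstacle is bookkeeping: one must verify that in $G_i$ and in the iterated $\psi$-differentiations every term genuinely contains a previously-estimated factor with the claimed weight and decay, so that no term escapes the induction — in particular tracking how the negative powers of $\omega$ (from differentiating $\sqrt{\omega}$ and $1/\sqrt{\omega}$) pair with the $\bar\omega$-weights, and checking that the "large $\xi$" threshold needed for the zeroth-order coefficient to be nonnegative can be absorbed uniformly since only finitely many orders $i\leq K_0$ are involved.
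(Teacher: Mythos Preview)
Your proposal is correct and follows precisely the scheme the paper itself only sketches in the sentence preceding the proposition: first obtain $\partial_\xi^i\omega$ by iterating the barrier arguments of Propositions \ref{wxsd}--\ref{wxxsd}, then recover $\partial_\psi$- and $\partial_\psi^2$-derivatives from the equation satisfied by $\partial_\xi^i\omega$ via the integrating-factor ODE, and finally differentiate that equation repeatedly in $\psi$ to reach the general $j$. Your write-up is in fact more detailed than the paper's own treatment; the only imprecision is the exact zeroth-order coefficient (for $i=1$ there is no $\omega_\xi/\omega$ term, cf.\ \eqref{l2}, and for $i=2$ it is $-\omega_\xi/\omega$ rather than $-\tfrac{i}{2}\omega_\xi/\omega$), but this is harmless since all that matters is that the coefficient is bounded and nonnegative for large $\xi$, which you correctly argue from concavity preservation and $\omega_\xi\to 0$.
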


\subsection{Sharp $L^\infty$ Decay Estimates of $u$}
\ 
\newline
\indent 
In this section, we demonstrate our main result, Theorem \ref{main2}.

\begin{proof}[Proof of Theorem \ref{main2}]
	Through the modified von Mises transformation:
	\begin{equation}
		u=\sqrt{\omega}, \quad x+d=e^{\xi},\quad  y=\int_{0}^{\psi}\frac{e^{\frac{1}{2}\xi}}{\sqrt{\omega(\xi,s)}}ds.
	\end{equation}
	Standard differential calculus yields the velocity components:
	\begin{align*}
		&u_y=\frac{1}{2}e^{-\frac{1}{2}\xi}\omega_\psi,\quad  u_{yy}=\frac{1}{2}e^{-\xi}\sqrt{\omega}\omega_{\psi\psi},\\
		&u_x=e^{-\xi} \frac{\omega_{\xi}}{2\sqrt{\omega}} - e^{-\xi} \frac{\omega_{\psi}}{4} \int_{0}^{\psi}\left(\frac{1}{\omega^\frac{1}{2}}-\frac{\omega_\xi}{\omega^{\frac{3}{2}}}\right)ds,\\
		&v=-e^{-\frac{1}{2}\xi}\frac{\psi}{2} + e^{-\frac{1}{2}\xi}\frac{\sqrt{\omega}}{2}\int_{0}^{\psi}\left(\frac{1}{\omega^\frac{1}{2}}-\frac{\omega_\xi}{\omega^{\frac{3}{2}}}\right)ds.
	\end{align*}

	Applying Proposition \ref{wxsd} yields for suitable positive constants $\mu(\varepsilon)$ and $d(\varepsilon)$:
	\begin{align*}
		|u_y-\bar{u}^d_y|&=\frac{1}{2}e^{-\frac{1}{2}\xi}|\omega_\psi-\bar{\omega}_{\psi}|\\
		&\lesssim_\varepsilon e^{-\frac{3}{2}\xi} e^{-\mu \psi^2} \\
		&\lesssim_\varepsilon (x+d)^{-\frac{3}{2}} e^{-\frac{\mu y^2}{x+d}};\\
		|u_{yy}-\bar{u}^d_{yy}|&=\frac{1}{2}e^{-\xi}\left|\sqrt{\omega}\omega_{\psi\psi}-\sqrt{\bar{\omega}}\bar{\omega}_{\psi\psi}\right|\\
		&\lesssim e^{-\xi} \left|\sqrt{\omega}\tilde{\omega}_{\psi\psi}\right| +e^{-\xi} \left| \frac{\bar{\omega}_{\psi\psi}}{\sqrt{\bar{\omega}}+\sqrt{\omega}}\tilde{\omega}\right|\\
		&\lesssim_\varepsilon e^{-2\xi} e^{-\mu \psi^2} \\
		&\lesssim_\varepsilon (x+d)^{-2} e^{-\frac{\mu y^2}{x+d}};\\
		|u_x-\bar{u}^d_x|&=\left|e^{-\xi} \frac{\omega_{\xi}}{2\sqrt{\omega}} - e^{-\xi} \frac{\omega_{\psi}}{4} \int_{0}^{\psi}\left(\frac{1}{\omega^\frac{1}{2}}-\frac{\omega_\xi}{\omega^{\frac{3}{2}}}\right)ds + e^{-\xi} 					\frac{\bar{\omega}_{\psi}}{4} \int_{0}^{\psi}\frac{1}{\bar{\omega}^\frac{1}{2}}ds\right|\\
		&\lesssim e^{-\xi} \left|\frac{\omega_{\xi}}{\sqrt{\omega}}\right|+e^{-\xi} \left|\psi \omega_\psi\right|\left|\frac{\omega_{\xi}}{\omega}\right| + e^{-\xi} \left|\psi \tilde{\omega}_\psi\right|+e^{-\xi}\left|\tilde{\omega}\right|\left|\psi \omega_\psi\right|\\
		&\lesssim_\varepsilon e^{-2\xi} e^{-\mu \psi^2} \\
		&\lesssim_\varepsilon (x+d)^{-2} e^{-\frac{\mu y^2}{x+d}}.
	\end{align*}

	Hence, one can obtain
	\begin{align*}
		&\left\|u_y-\bar{u}_y\right\|_{L_y^\infty} \leq \left\|u_y-\bar{u}^d_y\right\|_{L_y^\infty}+\left\|\bar{u}^d_y-\bar{u}_y\right\|_{L_y^\infty} \lesssim_\varepsilon (x+1)^{-\frac{3}{2}},\\
		&\left\|u_{yy}-\bar{u}_{yy}\right\|_{L_y^\infty} \leq \left\|u_{yy}-\bar{u}^d_{yy}\right\|_{L_y^\infty}+\left\|\bar{u}^d_{yy}-\bar{u}_{yy}\right\|_{L_y^\infty} \lesssim_\varepsilon (x+1)^{-2},\\
		&\left\|u_{x}-\bar{u}_{x}\right\|_{L_y^\infty} \leq \left\|u_{x}-\bar{u}^d_{x}\right\|_{L_y^\infty} +\left\|\bar{u}^d_{x}-\bar{u}_{x}\right\|_{L_y^\infty} \lesssim_\varepsilon (x+1)^{-2}.
	\end{align*}
	
    For higher-order derivatives, Propositions \ref{wxxsd} and \ref{wxxxsd} provide the necessary decay rates for mixed derivatives $\partial_\xi^i \partial_\psi^j \tilde{\omega}$. Substituting these into the coordinate transformations establishes the decay estimates for all derivatives in Theorem \ref{main2}.
\end{proof}

\appendix 

\section{The Principal Eigenvalue of $\mathcal{L}$}
\label{app:eigenvalue}

This section investigates the principal eigenvalue of the linear operator defined by
\begin{equation}
	\mathcal{L}(v)=-\frac{1}{2}\psi v_\psi-\sqrt{\bar{\omega}} v_{\psi\psi}-\frac{\bar{\omega}_{\psi\psi}}{2 \sqrt{\bar{\omega}}} v.
\end{equation}

We establish the following fundamental result:
\begin{proposition}\label{eigenvalue}
    The principal eigenvalue of operator $\mathcal{L}$ is $1$, with associated eigenfunction $\psi\bar{\omega}_\psi$.
\end{proposition}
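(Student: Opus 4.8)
The plan is to verify directly that $\psi\bar\omega_\psi$ is an eigenfunction with eigenvalue $1$, and then argue that this eigenvalue is \emph{principal} by exhibiting the positivity of the eigenfunction together with a self-adjoint (weighted) structure for $\mathcal{L}$. First I would substitute $v=\psi\bar\omega_\psi$ into $\mathcal{L}(v)$. Writing $g=\bar\omega_\psi$, one has $v_\psi=g+\psi g_\psi$ and $v_{\psi\psi}=2g_\psi+\psi g_{\psi\psi}$, so
\begin{align*}
	\mathcal{L}(v)&=-\tfrac12\psi(g+\psi g_\psi)-\sqrt{\bar\omega}\,(2g_\psi+\psi g_{\psi\psi})-\tfrac{\bar\omega_{\psi\psi}}{2\sqrt{\bar\omega}}\,\psi g.
\end{align*}
Now differentiate the profile equation $\sqrt{\bar\omega}\,\bar\omega_{\psi\psi}+\tfrac12\psi\bar\omega_\psi=0$, i.e. $\sqrt{\bar\omega}\,g_\psi+\tfrac12\psi g=0$ after noting $\bar\omega_{\psi\psi}=g_\psi$; this already gives $\sqrt{\bar\omega}\,g_\psi=-\tfrac12\psi g$, hence $-2\sqrt{\bar\omega}\,g_\psi=\psi g$ and $-\tfrac{\bar\omega_{\psi\psi}}{2\sqrt{\bar\omega}}\psi g = -\tfrac{g_\psi}{2\sqrt{\bar\omega}}\psi g = \tfrac12 (g_\psi)(- \tfrac{\psi g}{\sqrt{\bar\omega}})\cdot\tfrac12$; more cleanly, differentiate $\sqrt{\bar\omega}\,g_\psi=-\tfrac12\psi g$ once more in $\psi$ to express $\sqrt{\bar\omega}\,g_{\psi\psi}$ in terms of lower-order terms, substitute everything back, and collect. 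I expect the terms to cancel so that $\mathcal{L}(v)=v=\psi\bar\omega_\psi$; this is the routine computational core and I would present it compactly.

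Second, I would record that $\psi\bar\omega_\psi>0$ on $(0,\infty)$ by Lemma~\ref{barw} (since $\bar\omega_\psi>0$ there), and that it vanishes at $\psi=0$ and decays at $\psi=\infty$ at the Gaussian-type rate from Lemma~\ref{barw}/Lemma~\ref{f}, so it lies in the natural weighted space. Third, to identify this as the \emph{principal} eigenvalue I would put $\mathcal{L}$ into divergence form with the weight $A=\rho/\sqrt{\bar\omega}$, $\rho=\exp\!\big(\int_0^\psi \tfrac{s}{2\sqrt{\bar\omega(s)}}\,ds\big)$, introduced in Section~3: one checks
\begin{equation*}
	A\,\mathcal{L}(v)=-(\rho v_\psi)_\psi-\tfrac{\bar\omega_{\psi\psi}}{2\sqrt{\bar\omega}}\,A\,v,
\end{equation*}
so $\mathcal{L}$ is self-adjoint with respect to the inner product $\langle u,v\rangle_A=\int_0^\infty A u v\,d\psi$, with associated quadratic form $Q(v)=\int_0^\infty \rho v_\psi^2\,d\psi-\int_0^\infty \tfrac{\bar\omega_{\psi\psi}}{2\sqrt{\bar\omega}}A v^2\,d\psi$. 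By Sturm--Liouville / Courant--Fischer theory on the half-line (with the Dirichlet condition at $0$ and the decay condition at $\infty$ forced by the exponentially growing weight $\rho$), the bottom of the spectrum is attained by the unique positive eigenfunction; since $\psi\bar\omega_\psi$ is such a positive eigenfunction with eigenvalue $1$, it must be the principal one. Equivalently, one argues that any principal eigenfunction cannot change sign, so it is proportional to $\psi\bar\omega_\psi$, forcing the principal eigenvalue to equal $1$.

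The main obstacle is not the algebraic identity $\mathcal{L}(\psi\bar\omega_\psi)=\psi\bar\omega_\psi$ (which is mechanical once the profile ODE is differentiated twice), but rather justifying the Sturm--Liouville framework rigorously on the unbounded interval $(0,\infty)$ with the singular/degenerate coefficient $\sqrt{\bar\omega}$ vanishing at $\psi=0$: one must specify the function space (a weighted $H^1$ with weight $\rho$, or the form domain of $Q$), verify that the natural boundary behavior at $0$ and $\infty$ is the correct self-adjoint realization, and confirm compactness of the resolvent (so that the spectrum is discrete and the variational principle applies). I would handle the endpoint $\psi=0$ using $\bar\omega\sim c\psi$ and $\bar\omega_\psi\to\sqrt2 b_0>0$ from Lemma~\ref{barw}, which shows $\rho$ is bounded and bounded below near $0$ and the degeneracy is mild (limit-circle type, with the Dirichlet condition $v(0)=0$ being the relevant one), and handle $\psi=\infty$ using the Gaussian growth of $\rho$, which both confines the form domain and yields the compact embedding needed for discreteness of the spectrum. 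Once these functional-analytic points are in place, positivity of $\psi\bar\omega_\psi$ pins down the principal eigenvalue as $1$.
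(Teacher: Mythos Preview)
Your approach is correct and follows essentially the same strategy as the paper: verify the identity $\mathcal{L}(\psi\bar\omega_\psi)=\psi\bar\omega_\psi$ directly from the profile ODE, recast $\mathcal{L}$ in the self-adjoint divergence form $A\mathcal{L}(v)=-(\rho v_\psi)_\psi-\tfrac{\bar\omega_{\psi\psi}}{2\sqrt{\bar\omega}}Av$, and then use the variational characterization of the bottom of the spectrum together with positivity of $\psi\bar\omega_\psi$ to pin down the principal eigenvalue as $1$. The only substantive difference is in how the functional-analytic justification is handled: where you invoke Sturm--Liouville/Courant--Fischer theory as a package and correctly flag the degeneracy at $\psi=0$ and the unbounded domain as the points requiring care, the paper instead carries this out by hand. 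Specifically, it proves a weighted Hardy-type inequality $\int_0^\infty (1+\psi)^2\tfrac{\rho}{\bar\omega^2}\omega^2\,d\psi\lesssim\int_0^\infty \rho\omega_\psi^2\,d\psi$, uses it to extract a minimizer $\omega^*$ of the Rayleigh quotient via a direct compactness argument, and then concludes $\lambda=1$ by pairing $\omega^*$ against $v^*=\psi\bar\omega_\psi$ and exploiting self-adjointness and nonnegativity to force $\int Av^*\omega^*=\lambda\int Av^*\omega^*$. Your route is cleaner to state; the paper's route is self-contained and makes explicit exactly the compactness you would otherwise need to justify.
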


\begin{proof}
Define the weight function $\rho(\psi)$ through
\begin{equation}
	\mathcal{T}(v)=A \mathcal{L}(v)=-\left(\rho v_\psi\right)_\psi-\frac{\bar{\omega}_{\psi\psi}}{2 \sqrt{\bar{\omega}}} A v,
\end{equation}
where $A=\frac{\rho}{\sqrt{\bar{\omega}}}$ and $\rho=exp\left(\int_{0}^{\psi}\frac{s}{2\sqrt{\bar{\omega}(s)}}ds\right)$.

Consider the variational functional
\begin{equation}\label{F}
	F(v):=\frac{\int_{0}^{\infty}\left(\rho v_\psi^2-\frac{\bar{\omega}_{\psi\psi}}{2 \sqrt{\bar{\omega}}} A v^2\right) d\psi}{\int_{0}^{\infty}A v^2 d\psi}
\end{equation}
and define the principal eigenvalue
\begin{equation*}
	\lambda:=\mathop{inf}\limits_{\substack{{v\in \mathcal{C}_c^\infty [0,\infty)}\\ {v(0)=0}}} F(v).
\end{equation*}

The proof proceeds through four key steps:
	\begin{enumerate}[Step 1:] 
		\item \textbf{Fundamental inequality.} For any $\omega \in C_c^\infty[0,\infty)$ with $\omega(0)=0$:
		\begin{equation}\label{ki}
			\int_{0}^{\infty} (1+\psi)^2 \frac{\rho}{\bar{\omega}^2}\omega^2 d\psi \lesssim \int_{0}^{\infty} \rho \omega_\psi^2 d\psi.
		\end{equation}

		Let $B(\psi)=\int_{0}^{\psi} (1+s)^2 \rho(s) d s$. This follows from
		\begin{equation*}
			\lim_{\psi\rightarrow\infty} \frac{B(\psi)}{(1+\psi)\rho(\psi)}=\lim_{\psi\rightarrow\infty} \frac{(1+\psi)^2\rho(\psi)}{\rho+ (1+\psi)\rho \frac{\psi}{2\sqrt{\bar{\omega}}}}=2,
		\end{equation*}
		yielding 
		\begin{equation*}
			B(\psi)\lesssim (1+\psi) \rho(\psi).
		\end{equation*}
		Application of integration by parts yields:
		\begin{align*}
			&\int_{0}^{\infty} (1+\psi)^2 \rho \omega^2 d\psi= \int_{0}^{\infty} B'(\psi) \omega^2 d\psi\\
			=&-\int_{0}^{\infty} 2B(\psi) \omega \omega_\psi d\psi\\
			\lesssim& \left(\int_{0}^{\infty} (1+\psi)^2 \rho \omega^2 d\psi\right)^\frac{1}{2} \left(\int_{0}^{\infty} \rho \omega_\psi^2 d\psi\right)^\frac{1}{2}.
		\end{align*}
		Consequently,
		\begin{equation}\label{inq1}
			\int_{0}^{\infty} (1+\psi)^2 \rho \omega^2 d\psi \lesssim \int_{0}^{\infty} \rho \omega_\psi^2 d\psi.
		\end{equation}

        Let $\mathcal{X}(\psi)$ be a smooth cutoff function satisfying $0 \leq \mathcal{X}(\psi) \leq 1$ with $\mathcal{X}(\psi)=1$ on $[0,1]$ and $\mathcal{X}(\psi)=0$ on $[2,\infty)$. Applying inequality \eqref{inq1} to obtain:
		\begin{align*}
			&\int_{0}^{\infty} (1+\psi)^2 \frac{\rho}{\bar{\omega}^2}\omega^2 d\psi\\
			\lesssim &\int_{0}^{\infty} (1+\psi)^2 \frac{\rho}{\bar{\omega}^2}\left((\omega\mathcal{X})^2 +(\omega (1-\mathcal{X}))^2\right) d\psi\\
			\lesssim & \int_{0}^{2} \frac{(\omega\mathcal{X})^2}{\psi^2} d\psi +\int_{1}^{\infty} (1+\psi)^2 \rho \omega^2 d\psi\\
			\lesssim & \int_{0}^{2} \left( \omega_\psi^2\mathcal{X}^2+\omega^2\left(\mathcal{X}'\right)^2\right) d\psi +\int_{1}^{\infty} (1+\psi)^2 \rho \omega^2 d\psi\\
			\lesssim & \int_{0}^{\infty} \omega_\psi^2 d\psi +\int_{0}^{\infty} (1+\psi)^2 \rho \omega^2 d\psi\\
			\lesssim & \int_{0}^{\infty} \rho \omega_\psi^2 d\psi.
		\end{align*}

		\item \textbf{Existence of minimizer.} Let $\{\omega_n\}\subset C_c^\infty[0,\infty)$ with $\omega_n(0)=0$ satisfy $F(\omega_n) \leq \lambda + \frac{1}{n}$. Without loss of generality, assume $\int_0^\infty A\omega_n^2 d\psi=1$. Inequality (\ref{ki}) implies uniform boundedness with respect to $n$ in the weighted space $H_\rho^1 \left(\frac{1}{m},m\right)$ for any fixed $m>0$:
		\begin{equation}
			\int_{\frac{1}{m}}^{m} \left( \rho \omega_{n\psi}^2 + \rho \omega_{n}^2\right) d\psi \lesssim \lambda+1.
		\end{equation}
		
		By weak compactness in Hilbert spaces, there exists $\omega^*\in H_\rho^1(0,\infty)$ and subsequence $\{\omega_{n_k}\}$ with $\omega_{n_k} \rightharpoonup \omega^*$ in $H_\rho^1(0,\infty)$ and $\omega_{n_k} \to \omega^*$ in $L_\rho^2 \left(\frac{1}{m},m\right)$ for fixed $m\geq 2$, i.e.
		\begin{equation}\label{L2}
			\lim_{k\rightarrow \infty} \int_{\frac{1}{m}}^{m} \rho \left|\omega_{n_k}-\omega^*\right|^2 d\psi =0.
		\end{equation}

		Based on the inequality (\ref{ki}), one has the following estimate for any $m$:
		\begin{align*}
			&\int_{0}^{\infty} A \left|\omega_{n_k}-\omega^*\right|^2 d\psi\\
			=&\left(\int_{0}^{\frac{1}{m}}+\int_{\frac{1}{m}}^{m} +\int_{m}^{\infty}\right) A \left|\omega_{n_k}-\omega^*\right|^2 d\psi\\
			\lesssim& \int_{0}^{\frac{1}{m}} \psi^{\frac{3}{2}} \frac{\rho}{\bar{\omega}^2} \left|\omega_{n_k}-\omega^*\right|^2 d\psi +\int_{m}^{\infty} \frac{(1+\psi)^2}{(1+m)^2}\rho \left|\omega_{n_k}-\omega^*\right|^2 d\psi +\int_{\frac{1}{m}}^{m} A \left|\omega_{n_k}-\omega^*\right|^2 d\psi\\
			\lesssim& \left(m^{-\frac{3}{2}}+m^{-2}\right) \int_{0}^{\infty} \rho \left|\omega_{n_k \psi}-\omega^*_\psi \right|^2 d\psi + \int_{\frac{1}{m}}^{m} \rho \left|\omega_{n_k}-\omega^*\right|^2 d\psi\\
			\lesssim& m^{-\frac{3}{2}}+m^{-2}+ \int_{\frac{1}{m}}^{m} \rho \left|\omega_{n_k}-\omega^*\right|^2 d\psi.
		\end{align*}
		The strong convergence in (\ref{L2}) implies:
		\begin{equation*}
			\lim_{k\rightarrow \infty} \int_{0}^{\infty} A \left|\omega_{n_k}-\omega^*\right|^2 d\psi =0.
		\end{equation*}
		Then one obtains 
		\begin{equation}\label{eq11}
			\int_{0}^{\infty} A \left(\omega^*\right)^2 d\psi = \lim_{k\rightarrow \infty}\int_{0}^{\infty} A \omega_{n_k}^2 d\psi =1.
		\end{equation}

        From weak convergence $\omega_{n_k} \rightharpoonup \omega^*$ in $H^1_\rho \left(0,\infty\right)$, one has:
		\begin{align*}
			\int_{0}^{\infty} \rho \left(\omega^*_\psi\right)^2 d\psi & \leq \varliminf_{k\rightarrow\infty} \int_{0}^{\infty} \rho \omega_{n_k\psi}^2 d\psi,\\
			\int_{0}^{\infty} -\frac{\bar{\omega}_{\psi\psi}}{2 \sqrt{\bar{\omega}}} A \left(\omega^*\right)^2 d\psi & \leq \varliminf_{k\rightarrow\infty} \int_{0}^{\infty} -\frac{\bar{\omega}_{\psi\psi}}{2 \sqrt{\bar{\omega}}} A \omega_{n_k}^2 d\psi.
		\end{align*}
		Combining with \eqref{eq11} and \eqref{F} yields:
		\begin{equation*}
			F(\omega^*)\leq \varliminf_{k\rightarrow\infty} F(\omega_{n_k})=\lambda.
		\end{equation*}
		Consequently,
		\begin{equation}\label{lambda}
			\int_{0}^{\infty}\left(\rho \left(\omega^*_\psi\right)^2-\frac{\bar{\omega}_{\psi\psi}}{2 \sqrt{\bar{\omega}}} A \left(\omega^*\right)^2\right) d\psi = \lambda \int_{0}^{\infty}A \left(\omega^*\right)^2 d\psi.
		\end{equation}

		\item \textbf{$\mathcal{L}(\omega^*)=\lambda \omega^*$.} Through variational calculus, consider $f(t) := F(\omega^* + th)$ for any $h \in C_c^\infty[0,\infty)$ and $t\in (-\infty,\infty)$. The equation (\ref{lambda}) implies $f(0)=\lambda$ and $f'(0)=0$. This leads to
		\begin{align*}
			f'(0)=&\frac{2\int_{0}^{\infty}\left(\rho \omega^*_\psi h_\psi -\frac{\bar{\omega}_{\psi\psi}}{2 \sqrt{\bar{\omega}}} A \omega^* h\right) d\psi}{\int_{0}^{\infty}A \left(\omega^*\right)^2 d\psi}\\
			&-\frac{2 \left[\int_{0}^{\infty}\left(\rho \left(\omega^*_\psi\right)^2-\frac{\bar{\omega}_{\psi\psi}}{2 \sqrt{\bar{\omega}}} A \left(\omega^*\right)^2\right) d\psi\right]\int_{0}^{\infty}A \omega^* h d\psi}{\left(\int_{0}^{\infty}A \left(\omega^*\right)^2 d\psi\right)^2}\\
			=&\frac{2\int_{0}^{\infty}\left(\rho \omega^*_\psi h_\psi -\frac{\bar{\omega}_{\psi\psi}}{2 \sqrt{\bar{\omega}}} A \omega^* h\right) d\psi}{\int_{0}^{\infty}A \left(\omega^*\right)^2 d\psi}-\frac{2 \lambda\int_{0}^{\infty}A \omega^* h d\psi}{\int_{0}^{\infty}A \left(\omega^*\right)^2 d\psi}.
		\end{align*}
		Then one has 
		\begin{equation}
			\int_{0}^{\infty}\left(\rho \omega^*_\psi h_\psi -\frac{\bar{\omega}_{\psi\psi}}{2 \sqrt{\bar{\omega}}} A \omega^* h\right) d\psi=\lambda\int_{0}^{\infty}A \omega^* h d\psi.
		\end{equation}
		
		Therefore $\omega^*$ weakly satisfies the equation $\mathcal{T}(\omega^*) = \lambda A\omega^*$. Elliptic regularity theory guarantees $\omega^* \in C^\infty[0,\infty)$. Consequently, $\mathcal{T}(\omega^*)=\lambda A \omega^*$, i.e. $\mathcal{L}(\omega^*)=\lambda \omega^*$.

		\item \textbf{Verification of eigenvalue.} Calculation shows:
		\begin{align*}
			\mathcal{L}(\psi\bar{\omega}_\psi)=&-\frac{1}{2}\psi \left(\psi\bar{\omega}_{\psi\psi}+\bar{\omega}_\psi\right)-\sqrt{\bar{\omega}} \left(\psi\bar{\omega}_{\psi\psi\psi}+2\bar{\omega}_{\psi\psi}\right)-\frac{\bar{\omega}_{\psi\psi}}{2 \sqrt{\bar{\omega}}} \psi\bar{\omega}_\psi\\
			=&-2\sqrt{\bar{\omega}}\bar{\omega}_{\psi\psi}\\
			=&\psi\bar{\omega}_\psi.
		\end{align*}
		Let $v^* := \psi\bar{\omega}_\psi$ satisfy $\mathcal{T}(v^*) = Av^*$. At the same time, one also has $\mathcal{T}(\omega^*)=\lambda A \omega^*$, and without loss of generality, assume $\omega^*\geq 0$. The self adjointness of $\mathcal{T}$ implies:
		\begin{align*}
			&\int_{0}^{\infty} \mathcal{T}(v^*) \omega^* d\psi=\int_{0}^{\infty} A v^* \omega^* d\psi\\
			=&\int_{0}^{\infty} \mathcal{T}(\omega^*) v^* d\psi=\int_{0}^{\infty} \lambda A v^* \omega^* d\psi.
		\end{align*}
		From non-negativity $\omega^*,v^* \geq 0$, one concludes $\lambda=1$.
	\end{enumerate}
\end{proof}

\section*{Data availability statement}

Data sharing is not applicable to this article as no datasets were generated or analyzed during the current study.

\section*{Conflict of interest statement}

The authors declare that they have no conflict of interest.

\section*{Acknowledgments}

C. Gao is supported by NSFC under grant No.12494541.

\bibliographystyle{siam}
\bibliography{ref.bib}

\end{document}